\documentclass{amsart}
\usepackage{graphicx} 
\usepackage[utf8]{inputenc}
\usepackage{bm}
\usepackage{relsize}
\usepackage{verbatim}
\usepackage{physics}
\usepackage{hyperref}
\usepackage{amssymb}
\usepackage{amsmath}
\usepackage{amsthm}
\usepackage{enumitem}
\usepackage{tikz-cd}
\usepackage{nicefrac}
\usepackage{faktor}
\usetikzlibrary{matrix,arrows,decorations.pathmorphing}
\usepackage[a4paper, left=3cm, right=3cm, top=2cm]{geometry}
\usepackage{mdframed}
\usepackage{lipsum}
\usepackage{enumitem}
\usepackage{mathrsfs}

\numberwithin{equation}{section}

\theoremstyle{definition}
\newtheorem{thm}{Theorem}[section] 
\newtheorem{lemma}[thm]{Lemma}
\newtheorem{proposition}[thm]{Proposition}
\newtheorem{cor}[thm]{Corollary}

\newtheorem{defn}[thm]{Definition} 

\title{Small solutions to linear forms in primes}
\author{Tammo Dede}

\newcommand{\NN}{\mathbb{N}}
\newcommand{\ZZ}{\mathbb{Z}}
\newcommand{\RR}{\mathbb{R}}

\newcommand{\LL}{\mathbb{L}}

\newcommand{\SSS}{\mathfrak{S}}
\newcommand{\bfa}{\textbf{a}}
\newcommand{\bx}{\textbf{x}}
\newcommand{\by}{\textbf{y}}
\newcommand{\bv}{\textbf{v}}
\newcommand{\bc}{\textbf{c}}
\newcommand{\bd}{\textbf{d}}
\newcommand{\bw}{\textbf{w}}
\newcommand{\bz}{\textbf{z}}
\newcommand{\bp}{\textbf{p}}
\newcommand{\Pri}{\mathcal{P}}
\newcommand{\Ball}{\mathcal{B}}
\newcommand{\prim}{\text{prim}}
\newcommand{\loc}{\text{loc}}
\newcommand{\mix}{\text{mix}}
\newcommand{\vol}{\text{vol}}
\newcommand{\Jfrak}{\mathfrak{J}}
\newcommand{\Sfrak}{\mathfrak{S}}
\newcommand{\Ccal}{\mathcal{C}}
\newcommand{\Ecal}{\mathcal{E}}
\newcommand{\Gcal}{\mathcal{G}}
\newcommand{\Rcal}{\mathcal{R}}
\newcommand{\Ncal}{\mathcal{N}}
\newcommand{\Vcal}{\mathcal{V}}
\newcommand{\Tcal}{\mathcal{T}}
\newcommand{\Scal}{\mathcal{S}}
\newcommand{\Jcal}{\mathcal{J}}
\newcommand{\Ical}{\mathcal{I}}
\newcommand{\bald}{\textbf{b}}
\newcommand{\buld}{\textbf{u}}
\newcommand{\bvld}{\textbf{v}}
\newcommand{\bolt}{\textbf{t}}
\newcommand{\Rfrak}{\mathfrak{R}}
\newcommand{\dfrak}{\mathfrak{d}}
\newcommand{\Span}{\text{Span}}
\newcommand{\mm}{\mathfrak{m}}


\begin{document}
\begin{abstract}
    We show that a positive proportion of linear forms in four variables admit a solution in the primes that is as small as one would heuristically expect. Out of the linear forms that satisfy certain local solvability conditions, almost all admit small prime solutions.
\end{abstract}

\maketitle
\section{Introduction}$~$

Let $\bfa=(a_1,a_2,a_3,a_4)$ be a tuple of non-zero integers such that $\gcd(a_1;a_2;a_3;a_4)=1$. We call such tuples \textit{primitive} and denote the set of these by $\ZZ^4_{\prim}$. In this article we investigate small prime solutions $p_1,p_2,p_3,p_4$ to equations of the form 
\begin{equation}\label{Equation 1}
    a_1p_1+a_2p_2+a_3p_3+a_4p_4=0.
\end{equation}

Given a primitive integer tuple $\bfa$, we call it \textit{locally solvable} if the coefficients of $\bfa$ are not all of the same sign and if for every prime $p$ there exists a solution to Equation (\ref{Equation 1}) in the reduced residues modulo $p$. The set of locally solvable integer tuples will be denoted by $\LL^{loc}$. The goal of this article is to give a bound on the smallest prime solution to Equation (\ref{Equation 1}) in terms of the size of $\bfa$ for most $\bfa\in\LL^{\loc}$ following the approach of Browning, Le-Boudec and Sawin \cite{BrLBSa}. The adjustments for this setting are similar to work of Holdridge \cite{Hol}.

Writing $|\bx|=\max|x_i|$, one can use the circle method together with the prime number theorem to show that the box of prime tuples $\bp=(p_1,p_2,p_3,p_4)$ with $|\bp|\leq B$ contains
\begin{equation*}
    C_{\bfa}\frac{B^3}{(\log B)^4}\big(1+o_{\bfa}(1)\big)
\end{equation*}
many solutions to Equation (\ref{Equation 1}), where $C_{\bfa}>0$ if $\bfa$ is locally solvable. While this approach is sufficient to establish the existence of many solutions, it does not produce a satisfactory bound on the smallest one. In a naive approach one has to choose $B$ of exponential size in $|\bfa|$. Given any primitive tuple $\bfa$, we denote the set of prime solutions to (\ref{Equation 1}) by $L(\bfa)$ and let
\begin{equation*}
    \mm(\bfa)=\begin{cases}
        \min_{\bp\in L(\bfa)}|\bp|&\text{if }L(\bfa)\neq\emptyset,\\
        \infty&\text{else.}
    \end{cases}
\end{equation*}
For $\bfa\in\LL^{\loc}$, the approach above produces the bound $\mm(\bfa)\ll\exp(O(|\bfa|))$. There is work of Liu and Tsang \cite{LiuTsang} on the similar problem of finding a small prime solution $p_1,p_2,p_3$ to the linear equation
\begin{equation}\label{Equation b}
    a_1p_1+a_2p_2+a_3p_3=b
\end{equation}
for sufficiently large integers $b$ satisfying some congruence conditions. They were able to show the existence of an effective constant $c>0$ such that there is a prime solution $\bp$ to Equation (\ref{Equation b}) of size $|\bp|\leq 3|b|+|\bfa|^c$, making use of a refinement of the circle method to deal with multiple main terms. Moreover they needed sharp estimates on the distribution of zeros of certain Dirichlet $L$-functions. While this bound is polynomial in the size of the coefficients, it does not match their conjectured bound of $c=2+\epsilon$.

In the setting of Equation (\ref{Equation 1}) there are roughly $B^4/(\log B)^4$ many tuples of primes up to size $B$. Heuristically one could hope that the values on the left hand side of (\ref{Equation 1}) equidistribute in the interval $[-4|\bfa|B,4|\bfa|B]$, thus obtaining a solution to Equation (\ref{Equation 1}) as soon as $B^4/(\log B)^4\geq 4|\bfa|B$. Therefore we could hope for the estimate
\begin{equation*}
    \mm(\bfa)\ll_{\epsilon}|\bfa|^{1/3+\epsilon}
\end{equation*}
for $\epsilon>0$.

Instead of making refinements to the circle method approach, we switch to a statistical approach and make use of averaging over $\bfa$. For this, let $\|.\|$ denote the Euclidean norm, let $A>1$ be some real parameter, and set
\begin{equation*}
    \LL(A)=\{\bfa\in\ZZ^4_{\prim}:\|\bfa||\leq A\}.
\end{equation*}
Moreover, we let
\begin{equation*}
    \LL^{\loc}(A)=\{\bfa\in\LL^{\loc}:\|\bfa\|\leq A\}
\end{equation*}
the set of locally solvable tuples. We investigate the ratio
\begin{equation*}
    \varrho(A)=\frac{\#\big\{\bfa\in\LL^{\loc}(A):\mm(\bfa)^3\leq|\bfa|(\log|\bfa|)^4(\log\log|\bfa|)\big\}}{\#\LL^{\loc}(A)}
\end{equation*}
and we will show the following.
\begin{thm}\label{theorem:main_theorem}
    In the notation above, we have
    \begin{equation*}
        \lim_{A\rightarrow\infty}\varrho(A)=1.
    \end{equation*}
\end{thm}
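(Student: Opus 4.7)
The plan is to proceed via a second-moment (variance) method along the lines of \cite{BrLBSa}, as adapted in \cite{Hol}. Set $B(\bfa)=\lfloor(|\bfa|(\log|\bfa|)^4\log\log|\bfa|)^{1/3}\rfloor$ and introduce the von Mangoldt--weighted counting function
\begin{equation*}
N(\bfa)=\sum_{\substack{\bp\in\Pri^4\\ |\bp|\leq B(\bfa)\\ a_1p_1+a_2p_2+a_3p_3+a_4p_4=0}}\prod_{i=1}^4\log p_i,
\end{equation*}
so that $N(\bfa)>0$ is equivalent to $\mm(\bfa)\leq B(\bfa)$. Writing $M_k(A)=\sum_{\bfa\in\LL^{\loc}(A)}N(\bfa)^k$, Cauchy--Schwarz gives
\begin{equation*}
\#\{\bfa\in\LL^{\loc}(A):N(\bfa)>0\}\;\geq\;\frac{M_1(A)^2}{M_2(A)},
\end{equation*}
and, using $\#\LL^{\loc}(A)\asymp A^4$, the theorem reduces to the matching upper bound $M_2(A)\leq(1+o(1))M_1(A)^2/\#\LL^{\loc}(A)$ as $A\to\infty$.

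For $M_1(A)$ I would swap the order of summation. For a fixed tuple of primes $\bp$ the inner count $\#\{\bfa\in\LL^{\loc}(A):\bp\cdot\bfa=0\}$ is a lattice-point problem in the rank-3 sublattice $\bp^\perp\cap\ZZ^4$, whose covolume equals $\|\bp\|$ since $\gcd(p_1,\ldots,p_4)=1$; the ball $\|\bfa\|\leq A$ is further sieved by the congruence and sign conditions defining $\LL^{\loc}$. A Minkowski count together with a short local density computation yields an asymptotic of the form $c_{\bp}A^3/\|\bp\|+O(A^2)$, with bounded-on-average density $c_{\bp}$. Summing against primes and applying the prime number theorem to $\sum_{\bp}\prod\log p_i\asymp B^4$ gives $M_1(A)\asymp A^3B^3$, hence $M_1(A)/\#\LL^{\loc}(A)\asymp B^3/A\asymp(\log A)^4\log\log A$---precisely the heuristic expected value of $N(\bfa)$.

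The real difficulty is the second moment. Swapping sums,
\begin{equation*}
M_2(A)=\sum_{\bp,\bp'}\Big(\prod_i\log p_i\Big)\Big(\prod_j\log p'_j\Big)\cdot\#\{\bfa\in\LL^{\loc}(A):\bp\cdot\bfa=\bp'\cdot\bfa=0\}.
\end{equation*}
The diagonal contribution from pairs with $\bp$ proportional to $\bp'$ is bounded above by $M_1(A)\cdot(\log B)^{O(1)}$, which is asymptotically smaller than the target $M_1(A)^2/\#\LL^{\loc}(A)$ by a factor of $\log\log A$. For the linearly independent case the admissible $\bfa$ lie on the rank-2 lattice $\bp^\perp\cap\bp'^\perp\cap\ZZ^4$ of covolume $\|\bp\wedge\bp'\|$, so the inner count takes the shape $c_{\bp,\bp'}A^2/\|\bp\wedge\bp'\|+O(A)$. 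The Minkowski error sums over all pairs to $O(AB^8)=o(A^2B^6)$, which is admissible because our choice of $B$ forces $B^2=o(A)$. The crux is then to extract the main asymptotic of
\begin{equation*}
A^2\sum_{\bp,\bp'\,\text{indep.}}\frac{c_{\bp,\bp'}}{\|\bp\wedge\bp'\|}\Big(\prod_i\log p_i\Big)\Big(\prod_j\log p'_j\Big)
\end{equation*}
with leading term exactly $M_1(A)^2/\#\LL^{\loc}(A)$. Following \cite{BrLBSa, Hol}, I would parametrise pairs $(\bp,\bp')$ by the primitive rank-2 sublattice $\Lambda=\Span_{\RR}(\bp,\bp')\cap\ZZ^4$ (with covolume $\|\bp\wedge\bp'\|$) together with two primitive vectors of $\Lambda$, apply a sharp upper-bound sieve---in the style of Selberg or Bombieri--Vinogradov with moduli up to $\|\bp\wedge\bp'\|$---to the primes in $\Lambda$, and check that the local densities $c_{\bp,\bp'}$ factorise into an Euler product matching the singular series implicit in $M_1(A)^2$. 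The most delicate step will be establishing this cancellation of local factors uniformly in $\Lambda$ while simultaneously discarding the rare pairs with $\|\bp\wedge\bp'\|$ much smaller than $B^2$; once this is accomplished the required asymptotic for $M_2(A)$ follows and the theorem drops out.
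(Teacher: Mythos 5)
Your overall architecture has a fatal flaw at the point where you reduce the theorem to the bound $M_2(A)\leq(1+o(1))M_1(A)^2/\#\LL^{\loc}(A)$: that bound is false, and no amount of work on the off-diagonal terms will produce it. The expected size of $N(\bfa)$ is not (even approximately) constant across $\bfa\in\LL^{\loc}(A)$ with $\|\bfa\|\asymp A$; it carries an $\bfa$-dependent factor $\SSS_{\bfa}(B)\Jfrak_{\bfa}(B)$ (a truncated singular series and singular integral), and these local densities fluctuate by factors bounded away from $1$ — the singular series depends on which small primes divide the $a_i$, and the singular integral depends on the ratio $\|\bfa\|_{\min}/\|\bfa\|_{\infty}$. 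Writing $N(\bfa)\approx c_{\bfa}\mu$ with $\mu$ the global scale, Cauchy--Schwarz gives $\sum_{\bfa}c_{\bfa}^2\geq(\sum_{\bfa}c_{\bfa})^2/\#\LL^{\loc}(A)$ with equality only when the $c_{\bfa}$ are constant, and here the ratio of the two sides converges to a constant strictly greater than $1$. So the pure second-moment/Cauchy--Schwarz inequality $\#\{N>0\}\geq M_1^2/M_2$ can only ever yield a \emph{positive proportion} of $\bfa$ with a small solution (essentially the Corollary following Theorem \ref{theorem:L^loc_density_estimate}), not density $1$.

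The missing idea — and the actual content of the paper's proof — is to compare $N_{\bfa}(B)$ not to its global average but to an $\bfa$-dependent local model $N_{\bfa}^{\loc}(B)$, and to bound the variance $\sum_{\bfa\in\LL(A)}|N_{\bfa}(B)-N_{\bfa}^{\loc}(B)|^2\ll A^2B^6(\log\log B)^{-C}$ (Proposition \ref{prop:variance_upper_bound}). Chebyshev then shows $N_{\bfa}(B)$ is close to $N_{\bfa}^{\loc}(B)$ outside an exceptional set of size $O(A^4(\log\log A)^{-\delta})$, and a \emph{separate} argument (Proposition \ref{prop:number_of_bad_N^loc}, via pointwise lower bounds on the singular series for locally solvable $\bfa$ and an average lower bound on the singular integral) shows $N_{\bfa}^{\loc}(B)\gg B^3/(A(\log\log A)^{\kappa})$ for almost all $\bfa\in\LL^{\loc}(A)$. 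Combining the two gives $N_{\bfa}(B)>0$ for all but $o(A^4)$ tuples, and density $1$ follows from $\#\LL^{\loc}(A)\gg A^4$. Your off-diagonal analysis (parametrising pairs by the rank-$2$ lattice $\Span_{\RR}(\bp,\bp')\cap\ZZ^4$, controlling pairs with small covolume) is in the right spirit for establishing such a variance bound, but it must be organised as a computation of $D-2D^{\mix}+D^{\loc}$ for the model-subtracted quantity rather than as an asymptotic for $M_2$ alone. A secondary point: you let $B$ depend on $\bfa$, whereas the variance bound requires a fixed $B$ per range of $\|\bfa\|$; the paper handles this by choosing $A\asymp B^3(\log B)^{-4}(\log\log B)^{-1}$ and summing over dyadic shells $2^{-j}A<\|\bfa\|\leq 2^{-j+1}A$.
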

Furthermore, we will also compute that it is not rare for elements of $\LL(A)$ to lie in $\LL^{\loc}(A)$.
\begin{thm}\label{theorem:L^loc_density_estimate}
We have
\begin{equation*}
    \liminf_{A\rightarrow\infty}\frac{\#\LL^{\loc}(A)}{\#\LL(A)}>0.
\end{equation*}
\end{thm}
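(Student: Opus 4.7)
My plan is to establish positivity via a local-to-global density computation. A routine lattice point count gives $\#\LL(A) = c' A^4 + O(A^3)$ with $c' = \vol(B_1)/\zeta(4) > 0$, where $B_1 \subset \RR^4$ is the unit Euclidean ball (the contribution from $\bfa$ with some $a_i = 0$ lies on coordinate hyperplanes and is negligible). The sign condition is independent of the congruence conditions and contributes a factor $7/8$ by symmetry, since it excludes two of the sixteen sign patterns. It therefore suffices to show that among $\LL(A)$ a positive proportion of $\bfa$ also satisfy local solvability at every finite prime.

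For each prime $p$ let $\beta_p$ denote the conditional Haar density on $\ZZ_p^4$ of tuples $\bfa$ admitting $\bx \in ((\ZZ/p)^\ast)^4$ with $\bfa \cdot \bx \equiv 0 \pmod p$, given that $\bfa$ is primitive at $p$. I would first verify that $\beta_p > 0$ for every $p$. For $p = 2$ the only unit residue is $1$, so local solvability reduces to $a_1 + a_2 + a_3 + a_4 \equiv 0 \pmod 2$, and among the $15$ primitive residue classes mod $2$ exactly $7$ satisfy this, giving $\beta_2 = 7/15$. For $p \geq 3$ I would observe that whenever at least two coordinates of $\bfa$ are units mod $p$, say $a_1$ and $a_2$, then for any $x_3, x_4 \in (\ZZ/p)^\ast$ the affine equation $a_1 x_1 + a_2 x_2 \equiv -(a_3 x_3 + a_4 x_4) \pmod p$ has $p$ solutions in $\FF_p^2$ of which at most two meet a coordinate axis, leaving at least $p - 2 \geq 1$ unit solutions. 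Hence the only primitive $\bfa$ failing local solvability at $p \geq 3$ have at least three coordinates divisible by $p$, a condition of density $O(p^{-3})$; this gives $1 - \beta_p = O(p^{-3})$, so the infinite product $\prod_p \beta_p$ converges to a positive limit.

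To conclude I would run a truncated sieve with a parameter $Y$. The Chinese remainder theorem combined with standard equidistribution yields
\[
\#\{\bfa \in \LL(A) : \text{mixed signs and locally solvable at every } p \leq Y\} = \tfrac{7}{8}\prod_{p \leq Y} \beta_p \cdot \#\LL(A) + o_Y(A^4),
\]
while the tuples failing local solvability at some $p > Y$ are bounded by
\[
\sum_{p > Y} \#\{\bfa \in \LL(A) : \text{at least three coordinates divisible by } p\} \ll A^4 \sum_{p > Y} p^{-3} \ll A^4/Y^2.
\]
Subtracting and dividing by $\#\LL(A)$ gives $\liminf_{A \to \infty} \#\LL^{\loc}(A)/\#\LL(A) \geq \tfrac{7}{8}\prod_{p \leq Y} \beta_p - O(Y^{-2})$, and letting $Y \to \infty$ produces the desired positive liminf. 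The only delicate step is the tail bound $1 - \beta_p = O(p^{-3})$, which rests on the codimension-three observation above and makes the truncated sieve go through cleanly.
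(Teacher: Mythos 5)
Your argument is correct, but it proceeds by a genuinely different route from the paper. The paper does not compute local densities at all: it exhibits an explicit subfamily $\LL'(A)$ (all coordinates odd, with $(a_1a_2;a_3)=(a_1a_2;a_4)=1$, and one sign flipped), checks that every member is locally solvable, and then counts $\#\LL'(A)=CA^4+O(A^3\log A)$ by elementary manipulations with Euler's $\phi$ and M\"obius inversion, establishing $C>0$ indirectly via the crude lower bound $\#\LL'(A)\gg A^4/\log\log A$. You instead run a truncated sieve over all primes: your local analysis (solvability at $p=2$ forced by parity; solvability at odd $p$ whenever at least two coordinates are units, so that failure has codimension three and $1-\beta_p=O(p^{-3})$) is the same local observation the paper exploits implicitly through its formula for $\rho_{\bfa}(p^l)$, and your values $\beta_2=7/15$ and the sign factor $7/8$ check out. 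What your approach buys is the actual limiting density $\tfrac{7}{8}\prod_p\beta_p$ (with a matching upper bound it would upgrade the $\liminf$ to a limit), whereas the paper's subfamily only certifies a lower bound; what it costs is that the step "CRT combined with standard equidistribution yields the displayed asymptotic" conceals a M\"obius inversion over the global gcd interleaved with the congruence conditions modulo $\prod_{p\le Y}p$, together with a truncation of that inversion --- routine, and of the same nature as the paper's Lemma on primitive lattice points, but it should be written out to make the $o_Y(A^4)$ honest. The tail bound $\ll A^4/Y^2$ and the final limit exchange ($A\to\infty$ first, then $Y\to\infty$, using that $\prod_{p\le Y}\beta_p$ decreases to a positive limit) are sound.
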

This directly gives us the following result.
\begin{cor}
    We have
    \begin{equation*}
        \liminf_{A\rightarrow\infty}\frac{\#\big\{\bfa\in\LL^{\loc}(A):\mm(\bfa)^3\leq|\bfa|(\log|\bfa|)^4(\log\log|\bfa|)\big\}}{\#\LL(A)}>0.
    \end{equation*}
\end{cor}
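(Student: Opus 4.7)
The corollary is meant to follow immediately by combining Theorem \ref{theorem:main_theorem} and Theorem \ref{theorem:L^loc_density_estimate}, so the plan is essentially to write the quotient as a product and pass to the $\liminf$. Concretely, I would denote
\begin{equation*}
    N(A)=\#\big\{\bfa\in\LL^{\loc}(A):\mm(\bfa)^3\leq|\bfa|(\log|\bfa|)^4(\log\log|\bfa|)\big\},
\end{equation*}
and factor $N(A)/\#\LL(A) = \varrho(A)\cdot\big(\#\LL^{\loc}(A)/\#\LL(A)\big)$, so that the statement to be proved is that the product has positive $\liminf$ as $A\to\infty$.

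The two factors are controlled separately by the two theorems. Theorem \ref{theorem:main_theorem} gives $\lim_{A\to\infty}\varrho(A)=1$, hence for every $\eta>0$ there exists $A_0$ such that $\varrho(A)\geq 1-\eta$ for all $A\geq A_0$; in particular I would just take $\eta=1/2$. Theorem \ref{theorem:L^loc_density_estimate} gives $\liminf_{A\to\infty}\#\LL^{\loc}(A)/\#\LL(A)=:\delta>0$.

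Combining these, for $A\geq A_0$ one has
\begin{equation*}
    \frac{N(A)}{\#\LL(A)}\geq \tfrac{1}{2}\cdot\frac{\#\LL^{\loc}(A)}{\#\LL(A)},
\end{equation*}
and taking the $\liminf$ on both sides yields a lower bound of $\delta/2>0$, which is what the corollary asserts. There is no genuine obstacle here beyond the elementary fact that if $\alpha_A\to\alpha>0$ and $\liminf \beta_A=\beta>0$ then $\liminf \alpha_A\beta_A\geq\alpha\beta$, applied with $\alpha_A=\varrho(A)$ and $\beta_A=\#\LL^{\loc}(A)/\#\LL(A)$. The proof is therefore a two-line deduction and requires no further input beyond the two preceding theorems.
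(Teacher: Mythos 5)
Your proof is correct and is exactly the intended argument: the paper gives no separate proof, simply noting that the corollary follows directly from Theorems \ref{theorem:main_theorem} and \ref{theorem:L^loc_density_estimate}, which is precisely the factorization $N(A)/\#\LL(A)=\varrho(A)\cdot\#\LL^{\loc}(A)/\#\LL(A)$ you carry out.
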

We will prove Theorem \ref{theorem:L^loc_density_estimate} in Section \ref{section:density_theorem_proof} by a straightforward computation, which is mostly independent of the rest of this article. To establish Theorem \ref{theorem:main_theorem}, we employ an upper bound on a variance comparing a counting function for solutions to Equation (\ref{Equation 1}) to its expected value, which will be stated in Section \ref{section:outline_of_proof}.

In a more general setting, if $s\geq 5$ is an integer and $(a_1,\dots,a_s)\in(\ZZ\setminus\{0\})^s$ is such that $\gcd(a_1;\dots;a_s)=1$, we can consider equations of the form
\begin{equation}\label{Equation 2}
    a_1p_1+a_2p_2+\cdots+a_sp_s=0
\end{equation}
with primes $p_1,\dots,p_s$. Assuming some congruence conditions on the $a_i$, one can prove similar results to Theorem \ref{theorem:main_theorem}, finding solutions of size $|\bfa|^{1/(s-1)}(\log|\bfa|)^c$ for some $c>0$ on average. This can be done by essentially following the work of Brüdern and Dietmann \cite{BRUDERN201418}, establishing a result similar to Proposition \ref{prop:variance_upper_bound} via a direct approach with the circle method.

\section{Outline of the proof}\label{section:outline_of_proof}$~$
Let $\langle\cdot,\cdot\rangle$ denote the Euclidean inner product on $\RR^4$. We start this section by introducing a counting function for solutions of Equation (\ref{Equation 1}). Let $\Pri$ denote the set of primes and let $B\geq 1$ be some real parameter. We set
\begin{equation}\label{defn:P(B)}
    \Pri(B)=\{\bp=(p_1,\dots,p_4):|\bp|\leq B,\,p_i\in\Pri\}.
\end{equation}
For $\bc\in\ZZ^4$, we define the lattice
\begin{equation}\label{defn:Lambda_c lattice}
    \Lambda_{\bc}=\big\{\by\in\ZZ^4:\langle\bc,\by\rangle=0\big\}
\end{equation}
and let
\begin{equation}\label{defn:N_a(B)}
    N_{\bfa}(B)=(\log B)^4\sum_{\substack{\bx\in\Pri(B)\\\bfa\in\Lambda_{\bx}}}1.
\end{equation}
We will follow the strategy of Browning, Le-Boudec and Sawin to show that $N_{\bfa}(B)$ is on average well approximated by a local counting function. Our local counting function will be an adjusted version of the one used in \cite{BrLBSa} that closely resembles the version of Holdridge \cite{Hol}. We then show that this local counting function is large enough on average to obtain a positive number of solutions to Equation (\ref{Equation 1}).

By a heuristic argument, which can be made precise with a circle method, we expect $N_{\bfa}(B)$ to grow like $B^3  |\bfa|^{-1}$ for $B$ large. The local counting function will imitate this behavior. For any integer $N\geq 1$, any real $\gamma >0$ and $\bv\in\RR^N$, we define the region
\begin{equation}\label{defn:C^gamma}
    \Ccal_{\bv}^{(\gamma)}=\Big\{\bolt\in\RR^N:|\langle\bv,\bolt\rangle|\leq\frac{\|\bv\|\cdot\|\bolt\|}{2\gamma}\Big\}.
\end{equation}
For any integer $Q\geq 1$ and $\bc\in\ZZ^N$, we define the lattice
\begin{equation}\label{defn:lambda_c(Q) lattice}
    \Lambda_{\bc}^{(Q)}=\big\{\by\in\ZZ^N:\langle\bc,\by\rangle\equiv 0 \text{ mod }Q\big\}.
\end{equation}
Further we let
\begin{equation}\label{defn:alpha}
    \alpha=\log B.
\end{equation}
At this point we have to use the adjustments from Holdridge's version. For $B$ sufficiently large, we set
\begin{equation}\label{def:w}
    w=\frac{\log\log B}{\log\log\log B}
\end{equation}
and
\begin{equation}\label{def:W}
    W=\prod_{p\leq w}p^{\lceil\log w/\log p\rceil+1}.
\end{equation}
Here we need $W$ to not exceed some power of $\log B$ in size. In our case, an application of the estimate $\#\{p\leq w:p\text{ prime}\}\ll w/(\log w)$ to $\log W$ directly yields the bound
\begin{equation}\label{W_bound}
    W\ll (\log B)^3.
\end{equation}
This is sufficient for our purposes. We are now set up to introduce the local counting function. Let
\begin{equation}\label{defn:N^loc}
    N_{\bfa}^{\loc}(B)=(\log B)^4\frac{\alpha W}{\|\bfa\|}\sum_{\substack{\bx\in\Pri(B)\\\bfa\in\Lambda_{\bx}^{(W)}\cap\Ccal^{(\alpha)}_{\bx}}}\frac{1}{\|\bx\|}.
\end{equation}
As in \cite[(2.11)]{BrLBSa}, we expect a vector $\bx$ to satisfy $\bfa\in\Lambda_{\bx}^{(W)}\cap\Ccal^{(\alpha)}_{\bx}$ with probability $(\alpha W)^{-1}$. Therefore we expect $N_{\bfa}^{\loc}(B)$ to be of size $B^3\|\bfa\|^{-1}$ on average, which matches the expectation for $N_{\bfa}(B)$. The following proposition quantifies how often $N_{\bfa}(B)$ is well approximated by $N_{\bfa}^{\loc}(B)$ and is the main ingredient for proving Theorem \ref{theorem:main_theorem}.
\begin{proposition}\label{prop:variance_upper_bound}
    Let $A$ and $B\geq 3$ be positive real numbers such that
    \begin{equation*}
        B^2(\log B)<A\leq B^3(\log B)^{-4}(\log\log B)^{-1}.
    \end{equation*}
    Then there exists a $C>0$ such that we have
    \begin{equation*}
        \sum_{\bfa\in\LL(A)}\big|N_{\bfa}(B)-N_{\bfa}^{\loc}(B)\big|^2\ll\frac{A^2B^6}{(\log\log B)^C}.
    \end{equation*}
\end{proposition}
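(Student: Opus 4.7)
The plan is to expand the square into a difference of three second-moment sums. Writing
\begin{equation*}
\sum_{\bfa\in\LL(A)}\bigl|N_{\bfa}(B)-N_{\bfa}^{\loc}(B)\bigr|^2 = S_1 - 2S_{12} + S_2,
\end{equation*}
with $S_1=\sum_{\bfa\in\LL(A)}N_{\bfa}(B)^2$, $S_{12}=\sum_{\bfa\in\LL(A)}N_{\bfa}(B)N_{\bfa}^{\loc}(B)$, $S_2=\sum_{\bfa\in\LL(A)}N_{\bfa}^{\loc}(B)^2$. After interchanging the order of summation, each of these becomes a double sum over pairs $(\bp_1,\bp_2)\in\Pri(B)^2$ weighted by an inner count of $\bfa\in\LL(A)$ subject to two linear constraints: exact orthogonality to $\bp_i$ in $S_1$, one exact and one relaxed condition $\Lambda_{\bp_i}^{(W)}\cap\Ccal^{(\alpha)}_{\bp_i}$ in $S_{12}$, and both relaxed conditions in $S_2$. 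The goal is to show that the leading terms cancel between the three sums, leaving the claimed error.

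I would then split each double sum into a diagonal part (where $\bp_1$ and $\bp_2$ are linearly dependent) and an off-diagonal part. For the off-diagonal piece, the inner $\bfa$-count reduces to a lattice-point count: either in the rank-$2$ sublattice $\Lambda_{\bp_1}\cap\Lambda_{\bp_2}\subset\ZZ^4$ (for $S_1$), or in a rank-$4$ sublattice of co-volume $\asymp W^2$ cut by the two sectors $\Ccal^{(\alpha)}_{\bp_i}$ of relative volume $\asymp\alpha^{-2}$ (for $S_2$, and analogously for $S_{12}$). Standard lattice-point estimates (Davenport-type), together with the fact that the primes $p_{ij}$ are coprime to $W$, show that all three inner counts agree to leading order, scaling as $A^2/\|\bp_1\wedge\bp_2\|$ after accounting for the weight $\|\bfa\|^{-1}$; the prefactor $\alpha W/\|\bfa\|$ built into $N_{\bfa}^{\loc}$ is designed precisely so that this cancellation occurs.

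The diagonal piece is controlled directly: a single orthogonality condition $\langle\bfa,\bp\rangle=0$ with $\|\bfa\|\leq A$ allows $O(A^3/\|\bp\|)$ choices, and summing this over $\bp\in\Pri(B)$ with the prefactors in the definitions of $N_{\bfa}$ and $N_{\bfa}^{\loc}$ yields a contribution compatible with the claimed bound. The saving of $(\log\log B)^{-C}$ comes from the $W$-trick of (\ref{def:w})--(\ref{def:W}): after the leading cancellation, the local factor of $N_{\bfa}^{\loc}$ reproduces the arithmetic density of solutions to $\langle\bfa,\bp\rangle\equiv 0\pmod W$ up to a multiplicative error $1+O((\log w)^{-C'})=1+O((\log\log\log B)^{-C'})$, and it is this residual discrepancy that survives through the variance computation. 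The lower bound $A>B^2\log B$ ensures the lattice-point error terms are negligible relative to the main term, while the upper bound $A\leq B^3(\log B)^{-4}(\log\log B)^{-1}$ keeps us in the regime where $N_{\bfa}(B)$ has the heuristically expected size.

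The main obstacle is obtaining the off-diagonal cancellation uniformly in $(\bp_1,\bp_2)$, especially for pairs where $\|\bp_1\wedge\bp_2\|$ is small or where $\bp_1,\bp_2$ share unfavourable arithmetic with $W$. This requires precise lattice-point counts with error terms controlled by the successive minima of the intersection lattice, combined with a large-sieve-type estimate bounding the number of prime pairs lying in sublattices of small co-volume. The equidistribution of primes in residues modulo $W$, made accessible by Siegel-Walfisz thanks to $W\ll(\log B)^3$, then converts the purely geometric lattice estimates into the prime-weighted versions required. The framework is essentially that of \cite{BrLBSa}, with the adjustments of \cite{Hol} adapted from the quadratic setting to the present linear Equation (\ref{Equation 1}).
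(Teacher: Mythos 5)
Your decomposition is the one the paper uses: expand the square into $S_1-2S_{12}+S_2$, strip off the diagonal (the paper's $K(A,B)$), and show that the three off-diagonal pieces $D$, $D^{\mix}$, $D^{\loc}$ share the common main term $\frac{\pi}{\zeta(2)}A^2E(B)$ with $E(B)=(\log B)^8\sum_{(\bx,\by)\in\Omega(B)}\det(\Lambda_{\bx}\cap\Lambda_{\by})^{-1}$, so that only error terms survive. Your "large-sieve-type estimate bounding the number of prime pairs lying in sublattices of small co-volume" is exactly Holdridge's bound on $l(X,Y,\Delta)$ (Lemma \ref{lemma:Holdridge l(X,Y) bound}), which is indeed the decisive input. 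One small correction of emphasis: Siegel--Walfisz plays no role here; the inner counts are lattice-point counts in $\bfa$-space for fixed prime pairs $(\bx,\by)$, so no equidistribution of primes modulo $W$ is needed (that enters only in Section \ref{section:N^loc_lower_bound} for the lower bound on $N^{\loc}_{\bfa}$). Also, the inner counts match at scale $A^2\Gcal(\bx,\by)/\det(\ZZ\bx\oplus\ZZ\by)$ rather than $A^2/\|\bp_1\wedge\bp_2\|$; reconciling the $\gcd$ factors between the exact lattice $\Lambda_{\bx}\cap\Lambda_{\by}$ and the mod-$W$ lattices is precisely where the condition $\Gcal(\bx,\by)\mid W/\mathrm{rad}(W)$ and its exceptional set come in.

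The genuine gap is in your accounting of the final saving. You attribute it to a multiplicative discrepancy $1+O((\log w)^{-C'})=1+O((\log\log\log B)^{-C'})$ from the $W$-trick; a saving that is only a power of $\log\log\log B$ does \emph{not} give the claimed $(\log\log B)^{-C}$, so as written your mechanism falls short of the proposition. The actual savings are stronger and come from three separate places: (a) pairs with $\Gcal(\bx,\by)\nmid W/\mathrm{rad}(W)$ force $\Gcal(\bx,\by)>w$, hence $\dfrak_2(\bx,\by)\leq\|\bx\|\|\by\|/w$, and the $l(X,Y,\Delta)$ bound then yields a factor $1/w\asymp(\log\log\log B)/(\log\log B)$ — note $1/w$, not $1/\log w$; (b) the volume asymptotics for the sector intersections carry an error $\min\{1,\Delta(\bx,\by)^2/\alpha^2\}$, which after the same dyadic argument saves $\alpha^{-1/2}=(\log B)^{-1/2}$; these two are packaged as $F(B)/B^6\ll(\log\log B)^{-1/2+\epsilon}$ in Lemma \ref{lemma:F(B) saving}; and (c) the diagonal $K(A,B)$ is only $O(A^2B^6/\log\log B)$, and it is this term — not the lattice-point error terms — that forces the upper constraint $A\leq B^3(\log B)^{-4}(\log\log B)^{-1}$ (the first-moment sum $\sum_{\bfa}N_{\bfa}(B)\ll A^3B^3(\log B)^4$ must be $\ll A^2B^6/\log\log B$, i.e.\ $AB(\log B)^4\ll B^6/\log\log B$). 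Without identifying (a)--(c) with the correct exponents, the argument does not close.
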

The proof of this proposition is done in Section \ref{section:proof_of_variance_bound} and draws heavily from results about counting lattice points in certain regions. The preparatory work for that is done in Section \ref{section:geometry_of_numbers}. A direct consequence of this proposition is the following result.
\begin{cor}\label{corollary:number_of_bad_approx}
    Let $A$, $B$ and $C>0$ be as in the previous proposition. For $C/3>\delta>0$ we have
    \begin{equation*}
        \#\Big\{\bfa\in\LL(A):\big|N_{\bfa}(B)-N_{\bfa}^{\loc}(B)\big|>\frac{B^3}{|\bfa|(\log\log B)^{\delta}}\Big\}\ll \frac{A^4}{(\log\log A)^{\delta}}.
    \end{equation*}
\end{cor}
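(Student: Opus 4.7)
The plan is to extract this from Proposition \ref{prop:variance_upper_bound} by a direct Chebyshev-type argument. Write $\Ncal(A,B,\delta)$ for the set on the left hand side. For each $\bfa\in\Ncal(A,B,\delta)$, the defining inequality, once squared and multiplied through by $|\bfa|^2$, yields
\begin{equation*}
    |\bfa|^2\big|N_{\bfa}(B)-N_{\bfa}^{\loc}(B)\big|^2>\frac{B^6}{(\log\log B)^{2\delta}}.
\end{equation*}

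I will then sum this inequality over $\Ncal(A,B,\delta)$, extend the sum to all of $\LL(A)$, and use the crude bound $|\bfa|\leq\|\bfa\|\leq A$ on the left hand side. Applying Proposition \ref{prop:variance_upper_bound} gives
\begin{equation*}
    \#\Ncal(A,B,\delta)\cdot\frac{B^6}{(\log\log B)^{2\delta}}\leq A^2\sum_{\bfa\in\LL(A)}\big|N_{\bfa}(B)-N_{\bfa}^{\loc}(B)\big|^2\ll\frac{A^4 B^6}{(\log\log B)^{C}},
\end{equation*}
so that $\#\Ncal(A,B,\delta)\ll A^4(\log\log B)^{2\delta-C}$. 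Since the hypothesis $\delta<C/3$ forces $C-2\delta>\delta$, this is majorised by $A^4(\log\log B)^{-\delta}$.

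To match the statement of the corollary I only need to transfer $\log\log B$ into $\log\log A$. The permitted range $B^2\log B<A\leq B^3(\log B)^{-4}(\log\log B)^{-1}$ pins down $2\log B+\log\log B<\log A\leq 3\log B-O(\log\log B)$, so $\log A\asymp\log B$ and hence $\log\log A=\log\log B+O(1)$; this discrepancy is absorbed into the implied constant. I do not anticipate a substantial obstacle: the argument is essentially a Chebyshev inequality fed by Proposition \ref{prop:variance_upper_bound}, and the crude use of $|\bfa|\leq A$ to dispatch the $|\bfa|^{-1}$ weight is good enough precisely because the hypothesis $\delta<C/3$ leaves the $(\log\log B)^{C-2\delta}$ factor with room to spare. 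A dyadic decomposition of $\LL(A)$ according to the size of $|\bfa|$ would sharpen the quantitative statement slightly but is not needed for the application to Theorem \ref{theorem:main_theorem}.
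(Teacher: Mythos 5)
Your Chebyshev argument is correct and is exactly the "direct consequence" the paper has in mind (the paper gives no explicit proof, simply asserting the corollary follows from Proposition \ref{prop:variance_upper_bound}). The bookkeeping — bounding $|\bfa|^2$ by $A^2$, using $\delta<C/3$ to get $C-2\delta>\delta$, and converting $\log\log B$ to $\log\log A$ via $\log A\asymp\log B$ — is all sound.
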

We combine this with a lower bound on $N_{\bfa}^{\loc}(B)$. This way we will be able to produce small solutions to Equation \ref{Equation 1}. The following result will be proven in Section \ref{section:N^loc_lower_bound}.
\begin{proposition}\label{prop:number_of_bad_N^loc}
    Let $A,B$ be two positive real numbers with $B^2\leq A$. Let $\kappa>0$, we have
    \begin{equation*}
        \#\Big\{\bfa\in\LL^{\loc}(A):N_{\bfa}^{\loc}(B)\leq \frac{B^3}{A(\log\log A)^{\kappa}}\Big\}\ll\frac{A^4}{(\log\log A)^{\kappa/3}}.
    \end{equation*}
\end{proposition}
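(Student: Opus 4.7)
I plan to prove Proposition \ref{prop:number_of_bad_N^loc} by a second-moment argument. Set $Y_\bfa := \|\bfa\|\, N^{\loc}_\bfa(B)$ for $\bfa \in \LL^{\loc}(A)$, so that heuristically $Y_\bfa$ equals a positive constant times $B^3$ for typical $\bfa$. For $\bfa$ in the set $S$ to be bounded, combining $N^{\loc}_\bfa(B)\leq B^3/\bigl(A(\log\log A)^\kappa\bigr)$ with $\|\bfa\|\leq A$ yields $Y_\bfa\leq B^3/(\log\log A)^\kappa$, a large downward deviation from the expected value.

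First I compute the moments $\sum_{\bfa\in\LL^{\loc}(A)}Y_\bfa$ and $\sum_{\bfa\in\LL^{\loc}(A)}Y_\bfa^2$ by swapping summation order. After the swap they become sums over one (respectively two) primes $\bx\in\Pri(B)$ weighted by counts of $\bfa\in\LL^{\loc}(A)\cap\Lambda^{(W)}_\bx\cap\Ccal^{(\alpha)}_\bx$ (or the analogous joint intersection). These lattice-point counts can be estimated by the geometry-of-numbers tools of Section \ref{section:geometry_of_numbers}; the local solvability constraint decouples cleanly because $W$ is built from precisely the small primes that control $\LL^{\loc}$. The crucial point is that for generic pairs $(\bx_1,\bx_2)\in\Pri(B)^2$, i.e.\ those linearly independent modulo $W$ and of non-degenerate angular separation, the compatibility events at $\bx_1$ and $\bx_2$ are essentially independent, so their joint count factorises and the resulting off-diagonal main term of $\sum Y_\bfa^2$ matches $(\sum Y_\bfa)^2/\#\LL^{\loc}(A)$ at leading order. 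Diagonal pairs ($\bx_1=\bx_2$) together with degenerate near-parallel or $W$-dependent pairs, plus the boundary error in the lattice counts, contribute only lower-order terms, negligible thanks to the assumption $B^2\leq A$.

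The matching of leading terms means that the variance $\operatorname{Var}_\bfa(Y)=\overline{Y^2}-\bar Y^2$ is much smaller than $\bar Y^2\asymp B^6$; with suitable error control one obtains $\operatorname{Var}_\bfa(Y)\ll B^6/(\log\log A)^{\kappa/3}$. Since $|Y_\bfa-\bar Y|\geq \bar Y/2$ for $\bfa\in S$ once $\kappa$ is moderately large (the bound being trivial for bounded $\kappa$), Chebyshev's inequality gives
\begin{equation*}
    \#S\leq \frac{4\,\#\LL^{\loc}(A)}{\bar Y^{2}}\,\operatorname{Var}_\bfa(Y)\ll \frac{A^{4}}{(\log\log A)^{\kappa/3}},
\end{equation*}
as claimed. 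The main obstacle will be the second-moment estimate, specifically controlling the degenerate pairs $(\bx_1,\bx_2)$ with small angular separation or with $\bx_1\parallel\bx_2\pmod{W}$: although such pairs are sparse among $\Pri(B)^2$, they can individually have compatibility counts far larger than the generic value $\asymp A^4/(\alpha W)^2$, and showing that their aggregate contribution remains lower-order requires sharp estimates for primes in thin cones together with a careful study of the codimension-$2$ lattices $\Lambda^{(W)}_{\bx_1}\cap\Lambda^{(W)}_{\bx_2}$ as $(\bx_1,\bx_2)$ varies.
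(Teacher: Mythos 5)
Your Chebyshev strategy has a fatal flaw: the variance of $Y_\bfa=\|\bfa\|N^{\loc}_\bfa(B)$ over $\bfa\in\LL^{\loc}(A)$ is \emph{not} small compared with $\bar Y^2$; it is of the same order. The quantity $\|\bfa\|N^{\loc}_\bfa(B)/B^3$ does not concentrate around a single constant — it tracks the product of local densities $\SSS_{\bfa}(B)\Jfrak_{\bfa}(B)=\sigma(\bfa,W)\,\tau(\bfa,\alpha)$, and this product genuinely fluctuates by bounded but non-negligible factors as $\bfa$ varies (e.g.\ the Euler factor at $p=3$ is $9/8$ when $3\nmid a_1a_2a_3a_4$ but $3/4$ when $3$ divides exactly one $a_i$, and each case occurs for a positive proportion of $\bfa$; likewise $\tau(\bfa,\alpha)$ depends on the shape of $\bfa$). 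One can see the same thing in your second moment directly: after swapping sums, the pair $(\bx_1,\bx_2)$ contributes $\asymp A^4/\det(\Lambda^{(W)}_{\bx_1}\cap\Lambda^{(W)}_{\bx_2})$ times a volume factor carrying $1/\delta_{\bx_1,\bx_2}^{1/2}\asymp 1/(\|\bx_1\|\|\bx_2\|\sin\theta)$, whereas the square of the first moment produces $1/(\|\bx_1\|\|\bx_2\|)$ per pair; averaging $1/\sin\theta$ over directions in $\RR^4$ gives a constant strictly larger than $1$, so the off-diagonal second moment exceeds $(\sum Y_\bfa)^2/\#\LL^{\loc}(A)$ by a constant factor. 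Hence Chebyshev yields only the trivial bound $\#S\ll A^4$. The claimed estimate $\operatorname{Var}(Y)\ll B^6/(\log\log A)^{\kappa/3}$ is also impossible on its face: the left-hand side does not depend on $\kappa$ while $\kappa>0$ is arbitrary, so you would be asserting that the variance is smaller than every power of $\log\log A$.

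The paper argues pointwise rather than statistically. Lemma \ref{lemma:N^loc_and_SigmaJ_comparison} gives $\SSS_{\bfa}(B)\Jfrak_{\bfa}(B)\ll \frac{A}{B^3}N^{\loc}_{\bfa}(B)+(\log B)^{-C}$ (this step needs real input — primes in arithmetic progressions in short boxes, a Siegel--Walfisz-type theorem — which your sketch never engages). Then $\SSS_{\bfa}(B)\gg 1$ for every $\bfa\in\LL^{\loc}$ by an explicit Ramanujan-sum computation, and $\Jfrak_{\bfa}(B)\gg\delta^3\min\{\alpha\delta,1\}$ with $\delta=\|\bfa\|_{\min}/\|\bfa\|_{\infty}$. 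The exceptional set is therefore contained in the set of $\bfa$ with $\delta\lesssim(\log\log A)^{-\kappa/3}$, i.e.\ those with one unusually small coordinate, and counting these directly gives $\ll A^4(\log\log A)^{-\kappa/3}$ — note that this is exactly where the exponent $\kappa/3$ comes from (solving $\delta^3\leq(\log\log A)^{-\kappa}$), an exponent your approach has no mechanism to produce.
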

We close out this section by proving Theorem \ref{theorem:main_theorem} from the above results.
\begin{proof}[Proof of Theorem \ref{theorem:main_theorem}]
    Let $B$ be sufficiently large and pick $A= B^3(\log B)^{-4}(\log\log B)^{-1}$. Let $\bfa\in\LL^{\loc}(A)$ of size $\frac{1}{2}A<\|\bfa\|\leq A$. Note that we have $|\bfa|\leq\|\bfa\|\leq 2|\bfa|$. If $\bfa$ is neither counted in Corollary \ref{corollary:number_of_bad_approx} nor in Proposition \ref{prop:number_of_bad_N^loc} for some $\kappa<\delta$, then we have that
    \begin{align*}
        N_{\bfa}(B)\geq N_{\bfa}^{\loc}(B)-\frac{B^3}{|\bfa|(\log\log B)^{\delta}}\gg \frac{B^3}{|\bfa|(\log\log B)^{\kappa}}.
    \end{align*}
    In that case it is $N_{\bfa}(B)>0$. By our choice of $A$, we find that $B(\log B^2)^{-4/3}(\log\log B)^{-1/3}<A^{1/3}$. Now for $B$ large enough, we have $A>2B^2$ trivially, hence $\log B^2<(\log A/2)\leq\log |\bfa|$. Similarly we get that $\log\log B < \log\log|\bfa|$. Therefore we find that
    \begin{equation*}
        B\leq\big(|\bfa|(\log|\bfa|)^4(\log\log|\bfa|)\big)^{1/3}.
    \end{equation*}
    In the remaining cases, $\bfa$ is either counted in Corollary \ref{corollary:number_of_bad_approx} or Proposition $\ref{prop:number_of_bad_N^loc}$. However, there exists $c>0$ such that there are at most $O(A^4(\log\log A)^{-c})$
    of these $\bfa$. By Theorem \ref{theorem:L^loc_density_estimate} this set is negligible. We now sum this argument over dyadic pieces, considering $2^{-j}A<\|\bfa\|\leq 2^{-j+1}A$, to obtain the theorem.
\end{proof}

\section*{Notation}$~$
As already done in the introductory sections, for two real functions $f$ and $g$ with $g(x)>0$ we write $f=O(g)$ or $f\ll g$ if $|f(x)|\leq Cg(x)$ for some $C>0$ as $x\rightarrow\infty$. For any real $\alpha$, we let
\begin{equation*}
    e(\alpha)=\exp(2\pi i\alpha ).
\end{equation*}
If $q\geq 1$ is an integer, we indicate sums running over a complete set of residue classes $a$ modulo $q$ by $\sum_{a\,(q)}$. For sums over a complete set of reduced residue classes, we write
\begin{equation*}
    \sideset{}{^*}\sum_{a\,(q)}.
\end{equation*}
For integers $a_1,a_2,\dots,a_n$, the object $(a_1,a_2,\dots,a_n)$ is reserved for the element in $\ZZ^n$. We denote the gcd of $a_1,a_2,\dots,a_n$ by $\gcd(a_1;a_2;\dots;a_n)$ or $(a_1;a_2;\ldots;a_n)$.

\section{The geometry of numbers}\label{section:geometry_of_numbers}$~$
In this section we recall some needed results from the geometry of numbers. Most of the following results are covered in Section 3 of \cite{BrLBSa} but we write them down for completeness. Let $N\geq 1$ be an integer. We call a discrete subgroup $\Lambda$ of $\RR^N$ a \textit{lattice}. Its rank is the dimension of the subspace $\text{Span}_{\RR}(\Lambda)$ of $\RR^N$. For a lattice $\Lambda$ of rank $R\geq 1$ with a basis $(\bald_1,\dots,\bald_R)$, we let $\textbf{B}$ be the $N\times R$ matrix with columns $\bald_i$ for $1\leq i\leq R$. The \textit{determinant} of $\Lambda$ is then given by
\begin{equation}\label{def:det of a lattice}
    \det(\Lambda)=\sqrt{\det\big(\textbf{B}^T\textbf{B}\big)}.
\end{equation}
This definition is independent of the base chosen. If moreover $\Lambda\subset\ZZ^N$, then we call $\Lambda$ an \textit{integral lattice}. We say that an integral lattice of rank $R$ is \textit{primitive} if it is not properly contained in another integral lattice of rank $R$, which is the case if and only if $\text{Span}_{\RR}(\Lambda)\cap\ZZ^N=\Lambda$.
For any real $u>0$ we let
\begin{equation*}
    \Ball_N(u)=\{\by\in\RR^N:\|\by\|< u\},
\end{equation*}
and for any integer $R\geq 1$ we let
\begin{equation}\label{def:V_N}
    V_R=\vol_{\RR^R}\big(\Ball_R(1)\big).
\end{equation}
\begin{defn}
    Let $N\geq 1$ be an integer and $R\in\{1,\dots,N\}$. Given a lattice $\Lambda\subset\RR^N$ of rank $R$, for $1\leq i\leq R$ we define
    \begin{equation*}
        \lambda_i(\Lambda)=\inf\big\{u\in\RR_{>0}:\dim\big(\text{Span}_{\RR}\big(\Lambda\cap\Ball_N(u)\big)\big)\geq i\big\}.
    \end{equation*}
    We call $\lambda_i(\Lambda)$ the $i-$th successive minimum of $\Lambda$.
\end{defn}
We have that $\lambda_1(\Lambda)\leq\lambda_2(\Lambda)\leq\dots\leq\lambda_R(\Lambda)$ and by Minkowski's second theorem (see for example \cite{cassels2012introduction}) it is
\begin{equation*}
    \det(\Lambda)\leq\lambda_1(\Lambda)\cdots\lambda_R(\Lambda)\ll\det(\Lambda),
\end{equation*}
where the implied constant depends at most on the dimension $R$. We will need to be able to count lattice points in certain regions. If the region is large enough depending on the successive minima, the following lemma supplies an asymptotic formula for the number of lattice points in that region. This is Lemma 3.5 in \cite{BrLBSa}. 
\begin{lemma}\label{Lemma:General point count}
    Let $N\geq 2$ be an integer and $R\in\{1,\dots,N\}$. Let $\Lambda\subset\RR^N$ be a lattice of rank $R$. Further, let $I\in\{1,\dots,N-1\}$ and $\bvld_1,\dots,\bvld_I\in\RR^N$. For $T>0$ and $\gamma >0$ we define
    \begin{equation*}
        \Rcal_{\bvld_1,\dots,\bvld_I}(T,\gamma)=\Ball_N(T)\cap\Ccal_{\bvld_1}^{(\gamma)}\cap\dots\cap\Ccal_{\bvld_I}^{(\gamma)}
    \end{equation*}
    and
    \begin{equation*}
        \Vcal_{\bvld_1,\dots,\bvld_I}(\Lambda;\gamma)=\vol\big(\Span_{\RR}(\Lambda)\cap\Rcal_{\bvld_1,\dots,\bvld_I}(1,\gamma)\big).
    \end{equation*}
    Let $Y\geq \lambda_R(\Lambda)$. For $T\geq Y$ we have
    \begin{equation*}
        \#\big(\Lambda\cap\Rcal_{\bvld_1,\dots,\bvld_I}(T,\gamma)\big)= \frac{T^R}{\det(\Lambda)}\Big(\Vcal_{\bvld_1,\dots,\bvld_I}(\Lambda;\gamma)+O\Big(\frac{Y}{T}\Big)\Big),
    \end{equation*}
    where the implied constant depends at most on $R$.
\end{lemma}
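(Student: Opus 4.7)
The plan is to carry out a classical comparison between lattice points and volume on the ambient $R$-dimensional subspace $V:=\Span_{\RR}(\Lambda)$, with the discrepancy controlled by a tubular neighbourhood of $\partial(T\Rcal(1,\gamma)\cap V)$ inside $V$; here I abbreviate $\Rcal(T,\gamma):=\Rcal_{\bvld_1,\dots,\bvld_I}(T,\gamma)$. The first step is a standard Minkowski--Mahler basis reduction: choose a basis $\bald_1,\dots,\bald_R$ of $\Lambda$ with $\|\bald_i\|\ll_R \lambda_i(\Lambda)$. The resulting fundamental parallelepiped $\mathcal{F}$ has volume $\det(\Lambda)$ and diameter $\ll_R \lambda_R(\Lambda)\leq Y$, and its $\Lambda$-translates tile $V$.

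By the homogeneity $\Rcal(T,\gamma)=T\cdot\Rcal(1,\gamma)$, the $R$-volume of $T\Rcal(1,\gamma)\cap V$ equals $T^R\,\Vcal_{\bvld_1,\dots,\bvld_I}(\Lambda;\gamma)$. Tiling $V$ by translates of $\mathcal{F}$ then yields the standard discrepancy bound
\[
\Big|\#\big(\Lambda\cap T\Rcal(1,\gamma)\big)-\frac{T^R\,\Vcal_{\bvld_1,\dots,\bvld_I}(\Lambda;\gamma)}{\det(\Lambda)}\Big|\ll_R \frac{\vol\big(N_{d}(\partial(T\Rcal(1,\gamma)\cap V))\big)}{\det(\Lambda)},
\]
where $N_d$ denotes the $d$-thickening inside $V$, with $d\ll_R \lambda_R(\Lambda)\leq Y$. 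It therefore remains to show that this tube has $R$-volume $\ll_R T^{R-1}Y$.

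The boundary decomposes into a spherical piece from $\|\by\|=T$ together with at most $I\leq N-1$ conical pieces from the quadrics $\{|\langle\bvld_i,\by\rangle|=\|\bvld_i\|\|\by\|/(2\gamma)\}$. The spherical piece trivially has $(R-1)$-area $\ll_R T^{R-1}$, and each conical piece is scale-invariant and contained in $V\cap\Ball_N(T)$; an explicit computation in polar coordinates centred on the cone axis shows that its $(R-1)$-area is likewise $\ll_R T^{R-1}$, uniformly in $\bvld_i$ and $\gamma$. A standard tube estimate then gives $\vol(N_d(\partial(T\Rcal(1,\gamma)\cap V)))\ll_R T^{R-1}\,d$ as long as $T\geq d$, which holds by $T\geq Y\geq \lambda_R(\Lambda)$, and this produces the claimed error $O(Y/T)$. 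The main difficulty is precisely this uniformity in $\bvld_i$ and $\gamma$ of the cone-boundary area estimate; a clean way to side-step the issue is to appeal to a general semi-algebraic lattice-point counting theorem of Barroero--Widmer type applied to $T\Rcal(1,\gamma)\cap V$ as a definable family of bounded complexity, which gives the discrepancy bound with constants depending only on $R$.
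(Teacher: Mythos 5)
The paper does not prove this lemma itself — it imports it verbatim as Lemma 3.5 of \cite{BrLBSa} — and your argument (tile $\Span_{\RR}(\Lambda)$ by a Minkowski-reduced fundamental domain of diameter $\ll_R\lambda_R(\Lambda)\leq Y$, use homogeneity of $\Rcal(T,\gamma)$, and charge the discrepancy to a $Y$-neighbourhood of the boundary, whose spherical and conical pieces have the right size uniformly in $\bvld_i$ and $\gamma$) is essentially the standard proof given in that reference. The one step to phrase carefully is the tube estimate: rather than arguing ``$(R-1)$-area times thickness'' (which is false for general hypersurfaces and awkward at the cone vertices and at the corners where the bounding quadrics meet the sphere), bound the $d$-neighbourhood by covering the boundary with $O_R(1)$ Lipschitz images of $[0,1]^{R-1}$ with constant $\ll_R T$ — exactly the Davenport/Masser--Vaaler or Barroero--Widmer formulation you invoke at the end — which also handles the degenerate case where $\bvld_i$ is (nearly) orthogonal to $\Span_{\RR}(\Lambda)$ and the restricted cone is empty or all of the subspace.
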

In the cases where the region is too small to obtain an asymptotic formula for the number of lattice points within that region, we can still obtain an upper bound for the number of points. The following lemma, which is \cite[Lemma 3.7]{BrLBSa}, deals with that. A proof can be found at the same place.
\begin{lemma}\label{Lemma:Degenerate point count}
    Let $N\geq 1$ be an integer and $R\in\{1,\dots,N\}$. Let $\Lambda\subset\RR^N$ be a lattice of rank $R$. Let $M>0$ be such that $M<\lambda_1(\Lambda)$ and let $Y\geq\lambda_R(\Lambda)$. For any $R_0\in\{0,\dots,R-1\}$ and $T\leq Y$ we have
    \begin{equation*}
        \#\big(\Lambda\setminus\{0\}\cap\Ball_N(T)\big)\ll \frac{T^{R-R_0}Y^{R_0}}{\det(\Lambda)}+\Big(\frac{T}{M}\Big)^{R-R_0-1}.
    \end{equation*}
    Further, let $j_0\in\{1,\dots,R-1\}$ and $J\geq M$ be such that $J<\lambda_{j_0+1}(\Lambda)$. For any $R_0\in\{0,\dots,R-1-j_0\}$ and $T\leq Y$ we have
    \begin{equation*}
        \#\big(\Lambda\setminus\{0\}\cap\Ball_N(T)\big)\ll \frac{T^{R-R_0}Y^{R_0}}{\det(\Lambda)}+\Big(\frac{T}{M}\Big)^{j_0}\Bigg(\Big(\frac{T}{J}\Big)^{R-R_0-1-j_0}+1\Bigg).
    \end{equation*}
\end{lemma}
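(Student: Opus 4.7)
The plan is to reduce both inequalities to the classical count
$$\#\bigl(\Lambda\cap\Ball_N(T)\bigr)\ll_R\prod_{i=1}^R\Bigl(1+\frac{T}{\lambda_i(\Lambda)}\Bigr),$$
which follows from fixing a Minkowski-reduced basis $\bald_1,\dots,\bald_R$ of $\Lambda$ with $\|\bald_i\|\asymp_R\lambda_i(\Lambda)$ and arguing via the Gram-Schmidt orthogonalization that the coordinates $n_i$ of any $\by=\sum n_i\bald_i\in\Ball_N(T)$ satisfy $|n_i|\ll_R T/\lambda_i(\Lambda)$. This is the only input from the geometry of numbers that will be needed; everything else is a case distinction on the size of $T$ relative to the successive minima.

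Next introduce $s\in\{0,1,\dots,R\}$ defined as the largest index with $\lambda_s(\Lambda)\leq T$ (with the convention $s=0$ when $T<\lambda_1$). If $s=0$ the left-hand side is already empty and both inequalities are trivial, so one may assume $s\geq 1$, whence $T\geq\lambda_1>M$. The classical count then collapses to $\prod_{i=1}^s T/\lambda_i(\Lambda)$. For the first inequality, split according to whether $s\leq R-R_0-1$ or $s\geq R-R_0$. In the first case, replacing each $\lambda_i$ by the smaller $M$ gives $(T/M)^s$, which is at most $(T/M)^{R-R_0-1}$ because $T/M>1$. In the second case, rewrite the product as
$$T^{R-R_0}\cdot T^{s-R+R_0}\cdot\prod_{i=s+1}^R\lambda_i\cdot\det(\Lambda)^{-1},$$
using $\det(\Lambda)\asymp_R\prod_i\lambda_i$, and absorb the two middle factors into $Y^{R_0}$ via $T\leq Y$ and $\lambda_i\leq Y$; this yields the first term of the asserted bound.

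The second inequality follows from the same template, but refines the factor-by-factor estimate using the extra hypothesis $J<\lambda_{j_0+1}$. Now one may estimate $\lambda_i>M$ only for $1\leq i\leq j_0$ and $\lambda_i>J$ for $j_0+1\leq i\leq s$ (the latter of course only when $s>j_0$). The case distinction splits as $s\leq j_0$, $j_0<s\leq R-R_0-1$, and $s\geq R-R_0$: the first case yields a product bounded by $(T/M)^{j_0}$, the second by $(T/M)^{j_0}(T/J)^{s-j_0}$, and the third again produces $T^{R-R_0}Y^{R_0}/\det(\Lambda)$ exactly as in the first inequality. The main obstacle is absorbing the middle case into the asserted form $(T/M)^{j_0}\bigl((T/J)^{R-R_0-1-j_0}+1\bigr)$: one must separate the subcase $T\geq J$ (where monotonicity of the exponent bumps $s-j_0$ up to $R-R_0-1-j_0$) from the subcase $T<J$ (where $(T/J)^{s-j_0}\leq 1$), and it is precisely to accommodate this second possibility that the additive $+1$ is built into the statement.
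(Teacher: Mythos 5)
Your argument is correct, and since the paper does not reprove this lemma but simply defers to Lemma 3.7 of Browning, Le Boudec and Sawin, your proof --- reducing to the standard estimate $\#\big(\Lambda\cap\Ball_N(T)\big)\ll_R\prod_{i=1}^R\big(1+T/\lambda_i(\Lambda)\big)$ and then splitting according to the largest index $s$ with $\lambda_s(\Lambda)\leq T$ --- is essentially the argument given there. One minor remark: in your middle case $j_0<s\leq R-R_0-1$ one automatically has $T\geq\lambda_s(\Lambda)\geq\lambda_{j_0+1}(\Lambda)>J$, so the additive $+1$ is really only needed to absorb the case $s\leq j_0$; this does not affect correctness.
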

If the lattice $\Lambda\subset\RR^N$ is an integral lattice, we might also ask for the number of \textit{primitive points} in a given region, that is, the points $\bx=(x_1,\dots,x_N)\in\Lambda\cap\ZZ^N$ such that $\gcd(x_1,\dots,x_N)=1$. The following lemma deals with this for primitive integral lattices if we fix the region to be $\Ball_N(T)$ for some $T$ large enough. This is a slightly adjusted version of \cite[Lemma 3]{LBpaper}.
\begin{lemma}\label{lemma:primitive lattice points}
    Let $N\geq 1$ be an integer and let $R\in\{2,\dots,N\}$. Let $\Lambda\subset\RR^N$ be a primitive integral lattice of rank $R$ and let $Y\geq\lambda_R(\Lambda)$. For real $T\geq Y$ we have
    \begin{equation*}
        \#\big(\Lambda\cap\ZZ_{\prim}^N\cap\Ball_N(T)\big)=\frac{V_R}{\zeta(R)}\frac{T^R}{\det(\Lambda)}\Big(1+O\Big(\frac{Y\log T}{T}\Big)\Big)+O\big(T\big).
    \end{equation*}
    The implied constants depend at most on $R$.
\end{lemma}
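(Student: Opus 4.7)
The plan is to express the primitive-point count by Möbius inversion over the common divisor of the coordinates, reduce to counting all nonzero lattice points in nested balls, and then split the resulting sum at the threshold $d = T/Y$: below the threshold Lemma~\ref{Lemma:General point count} gives an honest asymptotic, while above it we only need the upper bound supplied by Lemma~\ref{Lemma:Degenerate point count}.

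First I would observe that, because $\Lambda$ is a primitive integral lattice, any $\bx \in \Lambda \cap d\ZZ^N$ satisfies $\bx/d \in \Span_\RR(\Lambda) \cap \ZZ^N = \Lambda$, so $\Lambda \cap d\ZZ^N = d\Lambda$. Writing each nonzero $\bx \in \Lambda$ uniquely as $d\by$ with $\by$ primitive, Möbius inversion yields
\begin{equation*}
    \#\bigl(\Lambda \cap \ZZ^N_{\prim} \cap \Ball_N(T)\bigr) = \sum_{d \geq 1} \mu(d)\, \#\bigl((\Lambda \setminus \{0\}) \cap \Ball_N(T/d)\bigr),
\end{equation*}
which effectively truncates at $d \leq T/\lambda_1(\Lambda)$, and crucially $\lambda_1(\Lambda) \geq 1$ since $\Lambda \subset \ZZ^N$.

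For $d \leq d_0 := T/Y$, the ball has radius $T/d \geq Y \geq \lambda_R(\Lambda)$, so Lemma~\ref{Lemma:General point count} (applied with a single vector $\bvld_1$ and $\gamma = 1/2$, so that $\Ccal_{\bvld_1}^{(1/2)} = \RR^N$ by Cauchy--Schwarz) gives $\#(\Lambda \cap \Ball_N(T/d)) = V_R (T/d)^R/\det(\Lambda) + O(Y(T/d)^{R-1}/\det(\Lambda))$. Summing against $\mu(d)$, the leading terms combine using
\begin{equation*}
    \sum_{d \leq d_0} \frac{\mu(d)}{d^R} = \frac{1}{\zeta(R)} + O\!\left(\left(\tfrac{Y}{T}\right)^{R-1}\right),
\end{equation*}
which is valid because $R \geq 2$; together with the aggregated $Y(T/d)^{R-1}$ error this produces the main term $V_R T^R/(\zeta(R)\det(\Lambda))$ with the claimed relative error $O(Y\log T/T)$.

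For $d > d_0$, I would apply Lemma~\ref{Lemma:Degenerate point count} with $R_0 = R - 1$ and any $M < \lambda_1(\Lambda)$, giving
\begin{equation*}
    \#\bigl((\Lambda \setminus \{0\}) \cap \Ball_N(T/d)\bigr) \ll \frac{(T/d) Y^{R-1}}{\det(\Lambda)} + 1.
\end{equation*}
The first piece, summed over $d_0 < d \leq T/\lambda_1(\Lambda)$, contributes $\ll T Y^{R-1}\log(Y/\lambda_1(\Lambda))/\det(\Lambda)$, which is at most the main error $Y T^{R-1}\log T/\det(\Lambda)$ because $(Y/T)^{R-2} \leq 1$ and $Y/\lambda_1(\Lambda) \leq T$. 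The ``$+1$'' contribution sums to $\ll T/\lambda_1(\Lambda) \leq T$, which is exactly the additive $O(T)$ in the statement.

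The main obstacle, and the reason the stated bound carries an additive $O(T)$ rather than folding it into the relative error, is exactly this tail piece arising from the ``$+1$'' in Lemma~\ref{Lemma:Degenerate point count}: one cannot avoid lattice points of norm close to $\lambda_1(\Lambda)$, and the integrality of $\Lambda$, giving $\lambda_1(\Lambda) \geq 1$, is what converts $T/\lambda_1(\Lambda)$ into the clean $O(T)$. Everything else reduces to routine bookkeeping with Möbius sums and applying Lemma~\ref{Lemma:General point count}.
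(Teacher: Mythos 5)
Your proposal is correct and follows essentially the same route as the paper: Möbius inversion over the coordinate gcd (using primitivity of $\Lambda$), a split of the divisor sum at $T/Y$, the asymptotic lattice-point count with tail completion of $\sum\mu(d)d^{-R}$ on the small divisors, and Lemma \ref{Lemma:Degenerate point count} with $R_0=R-1$ on the large divisors, whose ``$+1$'' term produces the additive $O(T)$. The only cosmetic difference is that you invoke Lemma \ref{Lemma:General point count} with $\gamma=1/2$ where the paper cites the ball-counting lemma of \cite{LBpaper} directly; the error bookkeeping is identical.
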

\begin{proof}
    We can mostly follow the proof of \cite[Lemma 3]{LBpaper} with a slight variation. To allow for lattices of rank 2, we need to pick up a factor of $\log Y$ at multiple places. We start by applying a Möbius inversion. For any integer $l\geq 1$ we have that $l^{-1}\Lambda\cap\ZZ^N=\Lambda$ since $\Lambda$ is primitive. With this in  mind, it is
    \begin{equation*}
        \#\big(\Lambda\cap\ZZ_{\prim}^N\cap\Ball_N(T)\big)=\sum_{l\leq T}\mu(l)\big(\#\big(\Lambda\cap\Ball_N(T/l)\big)-1\big).
    \end{equation*}
    We split the sum over $l$ into the parts where $1\leq l\leq T/Y$ and $T/Y<l\leq T$ so that we arrive at
    \begin{multline*}
        \#\big(\Lambda\cap\ZZ_{\prim}^N\cap\Ball_N(T)\big)=\sum_{l\leq T/Y}\mu(l)\#\big(\Lambda\cap\Ball_N(T/l)\big)+ \sum_{T/Y<l\leq T}\mu(l)\#\big(\Lambda\cap\Ball_N(T/l)\big)+O\big(T\big).
    \end{multline*}
    The first sum can be dealt with via \cite[Lemma 1]{LBpaper}. This yields
    \begin{equation*}
        \sum_{l\leq T/Y}\mu(l)\#\big(\Lambda\cap\Ball_N(T/l)\big)=V_R\sum_{l\leq T/Y}\frac{\mu(l)}{l^R}\frac{T^R}{\det(\Lambda)}\Big(1+O\Big(\frac{Yl}{T}\Big)\Big).
    \end{equation*}
    Using that $R\geq 2$, we see that the sum over the error term is bounded by $O\big(T^{R-1}\det(\Lambda)^{-1}Y\log T\big)$. For the main term we note that
    \begin{equation*}
        \sum_{l>T/Y}\frac{1}{l^R}\frac{T^R}{\det(\Lambda)}\ll \frac{1}{\det(\Lambda)}T^{R-1}Y.
    \end{equation*}
    Thus we can complete the sum to get
    \begin{equation*}
        \sum_{l\leq T/Y}\mu(l)\#\big(\Lambda\cap\Ball_N(T/l)\big)=\frac{V_R}{\zeta(R)}\frac{T^R}{\det(\Lambda)}\Big(1+O\Big(\frac{Y\log T}{T}\Big)\Big).
    \end{equation*}
    For the second sum, we use Lemma \ref{Lemma:Degenerate point count} with $R_0=R-1$ so that we have
    \begin{equation*}
        \sum_{T/Y<l\leq T}\mu(l)\#\big(\Lambda\cap\Ball_N(T/l)\big)\ll \sum_{T/Y<l\leq T}\left(\frac{TY^{R-1}}{l\det(\Lambda)}+1\right).
    \end{equation*}
    After executing the sum over $l$, this is bounded by
    \begin{equation*}
        \frac{TY^{R-1}(\log Y)}{\det(\Lambda)}+T.
    \end{equation*}
    Now the lemma follows by using that $T\geq Y$.
\end{proof}
We will repeat the computation above in more general settings in Section \ref{subsection:second moment bounds} to pass from counting primitive integer points to all integer points. To apply any of the lemmas above, it may be necessary to give a bound on the size of the largest successive minimum of a given lattice. In our case a crude estimate will suffice.
\begin{lemma}\label{lemma:succ minima upper bound}
    Let $N\geq 1$ be an integer and $R\in\{1,\dots,N\}$. If $\Lambda\subset\RR^N$ is an integral lattice of rank $R$, there exists a $c>0$, depending at most on $R$, such that
    \begin{equation*}
        \lambda_R(\Lambda)\leq c\det(\Lambda).
    \end{equation*}
\end{lemma}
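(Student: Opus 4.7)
The plan is to derive this immediately from Minkowski's second theorem, which has already been recalled in the excerpt, combined with the trivial observation that an integral lattice has a lower bound on its first successive minimum.

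First I would note that since $\Lambda \subset \ZZ^N$, every nonzero element of $\Lambda$ has Euclidean norm at least $1$, because each nonzero integer vector has at least one coordinate of absolute value $\geq 1$. Consequently $\lambda_1(\Lambda) \geq 1$, and hence $\lambda_i(\Lambda) \geq 1$ for every $i \in \{1,\dots,R\}$ by the monotonicity of the successive minima.

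Next I would invoke Minkowski's second theorem as stated in the excerpt, namely
\begin{equation*}
    \lambda_1(\Lambda) \cdots \lambda_R(\Lambda) \ll \det(\Lambda),
\end{equation*}
where the implied constant depends only on the rank $R$. Combining this with the previous step gives
\begin{equation*}
    \lambda_R(\Lambda) \;\leq\; \lambda_1(\Lambda)\cdots\lambda_{R-1}(\Lambda)\,\lambda_R(\Lambda) \;\ll\; \det(\Lambda),
\end{equation*}
since each of the first $R-1$ factors is at least $1$. This yields the claimed constant $c = c(R) > 0$.

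There is no real obstacle here: the whole point of the lemma is that the integrality assumption removes the possibility of very small successive minima. Both ingredients (Minkowski's second theorem and the integrality estimate $\lambda_1 \geq 1$) are completely standard, and the combination is immediate. If one wanted to be fully self-contained, one could additionally remark that $\lambda_1(\Lambda) \geq 1$ follows from the stronger fact $\lambda_1(\Lambda) \geq \lambda_1(\ZZ^N) = 1$, but this is essentially the same observation.
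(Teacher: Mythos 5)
Your proof is correct and is essentially identical to the paper's: both deduce $\lambda_i(\Lambda)\geq 1$ from integrality and then apply Minkowski's second theorem, discarding the first $R-1$ factors of the product $\lambda_1(\Lambda)\cdots\lambda_R(\Lambda)$. No gaps.
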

\begin{proof}
    Consider $\Lambda$ as a lattice of full rank in the vector space $\Span_{\RR}(\Lambda)$. Since $\Lambda$ is an integral lattice it is $\lambda_i(\Lambda)\geq 1$ for all $1\leq i\leq R$ and by Minkowski's second theorem there is a $\Tilde{c}>0$ such that we have
    \begin{equation*}
        \lambda_1(\Lambda)\cdots\lambda_R(\Lambda)\leq \Tilde{c}\det(\Lambda),
    \end{equation*}
    where $\Tilde{c}$ depends at most on $R$. Dividing by $\lambda_1(\Lambda)\cdots\lambda_{R-1}(\Lambda)$ completes the proof.
\end{proof}


\subsection{Determinants of certain lattices}$~$
In this subsection we briefly cover some formulas for determinants of a few lattices that will appear in Section \ref{section:proof_of_variance_bound}. The structure of this subsection is basically the same as \cite[Section 3.2]{BrLBSa} with some adjustments and simplifications that work in our setting. We start by recalling \cite[Definition 3.8]{BrLBSa}.

\begin{defn}
    Let $N\geq 1$ be an integer and $k\in\{1,\dots,N\}$. Given linearly independent vectors $\bc_1,\dots,\bc_k\in\ZZ^N$, we let $\Gcal(\bc_1,\dots,\bc_k)$ denote the greatest common divisor of the $k\times k$ minors of the $N\times k$ matrix whose columns are the vectors $\bc_1,\dots,\bc_k$.
\end{defn}
Next, recall the definitions (\ref{defn:Lambda_c lattice}) and (\ref{defn:lambda_c(Q) lattice}) of $\Lambda_{\bc}$ and $\Lambda_{\bc}^{(Q)}$ for $\bc\in\RR^N$ and $Q\geq 1$ an integer. Corresponding to \cite[Lemma 4]{LBpaper} and \cite[Lemma 3.10]{BrLBSa}, we have the following.
\begin{lemma}\label{lemma:determinant of Lambda}
    Let $N\geq 2$ be an integer and $k\in\{1,\dots,N-1\}$. Let $\bc_1,\dots,\bc_k\in\ZZ^N$ be linearly independent. Then the lattice $\Lambda_{\bc_1}\cap\cdots\cap\Lambda_{\bc_k}$ is primitive of rank $N-k$ and we have
    \begin{equation*}
        \det\big(\Lambda_{\bc_1}\cap\dots\cap\Lambda_{\bc_k}\big)=\frac{\det\big(\ZZ\bc_1\oplus\cdots\oplus\ZZ\bc_k\big)}{\Gcal(\bc_1,\dots,\bc_k)}.
    \end{equation*}
\end{lemma}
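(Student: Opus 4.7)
My plan is to treat the two halves of the lemma separately. The structural half (primitivity and rank) falls out of identifying $\Lambda := \Lambda_{\bc_1}\cap\cdots\cap\Lambda_{\bc_k}$ as the integer points of a real subspace, and the determinant formula reduces, via orthogonal projection, to a Smith normal form index computation.

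For primitivity and rank, let $W := \{y\in\RR^N : \langle\bc_i,y\rangle = 0 \text{ for all } i\}$. By linear independence of the $\bc_i$, $\dim W = N-k$, and since $\Lambda = W\cap\ZZ^N$ is by construction the full set of integer points of $W$, it is automatically primitive of rank $N-k$.

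For the determinant formula, I would introduce $V := \Span_\RR(\bc_1,\ldots,\bc_k) = W^\perp$ together with the orthogonal projection $\pi : \RR^N\to V$. Since $\ker(\pi|_{\ZZ^N}) = \Lambda$, covolume multiplicativity along $\RR^N = V\oplus W$ gives $\vol(V/\pi(\ZZ^N))\cdot\det(\Lambda) = 1$. As $\pi$ fixes each $\bc_i$, we have $\bc_1\ZZ\oplus\cdots\oplus\bc_k\ZZ = C\ZZ^k \subseteq \pi(\ZZ^N)$, where $C$ is the $N\times k$ matrix with columns $\bc_i$. Comparing covolumes in $V$ reduces the claim to the index identity
\[
[\pi(\ZZ^N) : C\ZZ^k] = \det(\bc_1\ZZ\oplus\cdots\oplus\bc_k\ZZ)^2/\Gcal(\bc_1,\ldots,\bc_k).
\]

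To compute this index I would observe that the restriction $C^T|_V : V\to\RR^k$ is a linear isomorphism (both sides are $k$-dimensional and $\ker C^T \cap V = V\cap V^\perp = 0$). Since $C^T$ annihilates $W$, it commutes with $\pi$, so under this isomorphism $\pi(\ZZ^N)$ corresponds to $C^T\ZZ^N$ while $C\ZZ^k$ corresponds to $C^TC\,\ZZ^k$. Thus the index in question equals $[C^T\ZZ^N : C^TC\,\ZZ^k] = |\det(C^TC)|/[\ZZ^k : C^T\ZZ^N]$, and the formula drops out from the two facts $|\det(C^TC)| = \det(\bc_1\ZZ\oplus\cdots\oplus\bc_k\ZZ)^2$ and $[\ZZ^k : C^T\ZZ^N] = \Gcal(\bc_1,\ldots,\bc_k)$. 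The latter is the classical consequence of Smith normal form that the product of the elementary divisors of an integer matrix equals the gcd of its maximal minors, and is the main (though standard) technical obstacle; the rest is bookkeeping among the three ambient spaces $\RR^N$, $V$, and $W$.
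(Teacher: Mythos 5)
Your argument is correct. Note first that the paper does not actually prove this lemma: it is imported verbatim from the references (Lemma~4 of the Le Boudec paper and Lemma~3.10 of Browning--Le Boudec--Sawin), so there is no in-paper proof to compare against. Your route is a clean self-contained derivation of the standard fact. The structural half is fine: $\Lambda=W\cap\ZZ^N$ with $W$ the rational subspace cut out by the $\bc_i$, hence primitive of rank $N-k$. For the determinant, the chain of reductions checks out: the multiplicativity $\det(\ZZ^N)=\det(\Lambda)\cdot\det\big(\pi(\ZZ^N)\big)$ is the standard projection formula (valid because $\Lambda$ has full rank in $W$), the identification of $\pi(\ZZ^N)$ with $C^T\ZZ^N$ via the isomorphism $C^T|_V$ is legitimate since $C^T=C^T\circ\pi$, and the tower $C^TC\,\ZZ^k\subseteq C^T\ZZ^N\subseteq\ZZ^k$ together with $[\ZZ^k:C^TC\,\ZZ^k]=|\det(C^TC)|=\det(\ZZ\bc_1\oplus\cdots\oplus\ZZ\bc_k)^2$ and the Smith-normal-form identity $[\ZZ^k:C^T\ZZ^N]=\Gcal(\bc_1,\dots,\bc_k)$ yields exactly the claimed index, hence the formula. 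In effect you have reproved the classical duality $\det(\Lambda)=\det(\Lambda^{\perp})$ for primitive lattices (with $\Lambda^{\perp}$ the saturation of $\ZZ\bc_1\oplus\cdots\oplus\ZZ\bc_k$, whose determinant is the quotient by $\Gcal$), which is precisely how the cited sources phrase it; your version has the advantage of being elementary and not requiring that duality as a black box.
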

\begin{lemma}\label{lemma:determinants of Lambda intersections}
    Let $N\geq 2$ and $Q\geq 1$ be integers. Let $\bc,\bd\in\ZZ^N_{\prim}$ be linearly independent. We have
    \begin{equation*}
    \det\big(\Lambda_{\bc}^{(Q)}\cap\Lambda_{\bd}^{(Q)}\big)=\frac{Q^2}{\gcd\big(\Gcal(\bc,\bd),Q\big)}
    \end{equation*}
    and
    \begin{equation*}
        \det\big(\Lambda_{\bc}\cap\Lambda_{\bd}^{(Q)}\big)=\|\bc\|\frac{Q}{\gcd\big(\Gcal(\bc,\bd),Q\big)}.
    \end{equation*}
\end{lemma}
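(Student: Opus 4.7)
The plan is to treat both identities uniformly by analysing the $\ZZ$-linear map $\Phi\colon \ZZ^N\to\ZZ^2$, $\Phi(\by)=(\langle\bc,\by\rangle,\langle\bd,\by\rangle)$, and its image $L:=\Phi(\ZZ^N)\subseteq\ZZ^2$. The generators of $L$ are the pairs $(c_i,d_i)$, so by Smith normal form the index $[\ZZ^2:L]$ is the gcd of the $2\times 2$ minors of the $N\times 2$ matrix with columns $\bc,\bd$, which is exactly $\Gcal(\bc,\bd)$. Primitivity of $\bc$ supplies integers $\alpha_i$ with $\sum_i\alpha_ic_i=1$, so $(1,y)\in L$ for some $y\in\ZZ$. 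Completing to a basis using the index formula gives a basis of $L$ of the form $(1,y),\,(0,\Gcal(\bc,\bd))$. All of what follows is then routine bookkeeping in this basis.

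For the first identity, $\Lambda_{\bc}^{(Q)}\cap\Lambda_{\bd}^{(Q)}=\Phi^{-1}(Q\ZZ^2)$ has full rank in $\ZZ^N$, so its determinant equals its index in $\ZZ^N$. The first isomorphism theorem gives
\[
\ZZ^N/\Phi^{-1}(Q\ZZ^2)\;\cong\;L/(L\cap Q\ZZ^2).
\]
In the basis above, $a(1,y)+b(0,\Gcal(\bc,\bd))\in Q\ZZ^2$ iff $Q\mid a$ and $Q\mid b\,\Gcal(\bc,\bd)$, so $[L:L\cap Q\ZZ^2]=Q\cdot Q/\gcd(\Gcal(\bc,\bd),Q)$, which is the claimed value $Q^2/\gcd(\Gcal(\bc,\bd),Q)$.

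For the second identity, $\Lambda_{\bc}\cap\Lambda_{\bd}^{(Q)}$ is a full-rank sublattice of $\Lambda_{\bc}$, hence
\[
\det\big(\Lambda_{\bc}\cap\Lambda_{\bd}^{(Q)}\big)=[\Lambda_{\bc}:\Lambda_{\bc}\cap\Lambda_{\bd}^{(Q)}]\cdot\det(\Lambda_{\bc}).
\]
Lemma \ref{lemma:determinant of Lambda} with $k=1$, combined with $\Gcal(\bc)=\gcd(c_1,\dots,c_N)=1$, gives $\det(\Lambda_{\bc})=\|\bc\|$. For the index, restrict $\by\mapsto\langle\bd,\by\rangle$ to $\Lambda_{\bc}$: its image is the second-coordinate projection of $L\cap(\{0\}\times\ZZ)$, and the basis description shows that $L\cap(\{0\}\times\ZZ)=\{0\}\times\Gcal(\bc,\bd)\ZZ$, so the image is $\Gcal(\bc,\bd)\ZZ$. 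The preimage of $Q\ZZ$ under this restricted map is precisely $\Lambda_{\bc}\cap\Lambda_{\bd}^{(Q)}$, so the index equals $[\Gcal(\bc,\bd)\ZZ:\Gcal(\bc,\bd)\ZZ\cap Q\ZZ]=Q/\gcd(\Gcal(\bc,\bd),Q)$, and the stated formula follows.

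The genuinely delicate step is identifying the smaller Smith invariant of $L$ with $1$: it is exactly here that the primitivity hypothesis enters, and without it one would be left with an expression in both Smith invariants of $L$ rather than the clean divisor $\gcd(\Gcal(\bc,\bd),Q)$. Once this reduction is in place, both determinants collapse to single-invariant index calculations, and no further input beyond Lemma \ref{lemma:determinant of Lambda} and elementary Smith-normal-form manipulations is needed.
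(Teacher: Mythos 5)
Your argument is correct. The paper itself does not prove this lemma; it imports it from the cited references (\cite[Lemma 4]{LBpaper}, \cite[Lemma 3.10]{BrLBSa}), so there is no in-text proof to compare against. Your route through the image lattice $L=\Phi(\ZZ^N)\subseteq\ZZ^2$ is a clean and complete substitute: the index $[\ZZ^2:L]=\Gcal(\bc,\bd)$ is indeed the product of the Smith invariants, i.e.\ the gcd of the $2\times2$ minors; primitivity of $\bc$ puts $(1,y)$ in $L$ and hence forces the basis $(1,y),(0,\Gcal(\bc,\bd))$; the identification $\Lambda_{\bc}^{(Q)}\cap\Lambda_{\bd}^{(Q)}=\Phi^{-1}(Q\ZZ^2)$ together with $\ZZ^N/\Phi^{-1}(Q\ZZ^2)\cong L/(L\cap Q\ZZ^2)$ gives the first determinant as an index, and the condition $(a,ay+b\Gcal(\bc,\bd))\in Q\ZZ^2\iff Q\mid a$ and $Q\mid b\,\Gcal(\bc,\bd)$ is right (given $Q\mid a$ the term $ay$ is absorbed). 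The second identity likewise checks out: $Q\Lambda_{\bc}\subseteq\Lambda_{\bc}\cap\Lambda_{\bd}^{(Q)}$ justifies the full-rank index formula inside $\Lambda_{\bc}$, $\det(\Lambda_{\bc})=\|\bc\|$ follows from Lemma \ref{lemma:determinant of Lambda} with $k=1$, and $\psi(\Lambda_{\bc})=\Gcal(\bc,\bd)\ZZ$ is exactly the second coordinate of $L\cap(\{0\}\times\ZZ)$. Your closing remark is also the right diagnosis: primitivity is what collapses the answer to a single invariant, and it is a stated hypothesis.
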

Next we introduce parts of \cite[Definition 3.13]{BrLBSa}.
\begin{defn}\label{def:dfrak}
    Let $N\geq 1$ be an integer and $R\in\{2,\dots,N\}$. Let $\bx,\by$ be two linearly independent vectors in $\ZZ^N$. We define $\dfrak_R(\bx,\by)$ to be the minimum determinant of a rank $R$ sublattice of $\ZZ^N$ containing $\bx$ and $\by$.
\end{defn}
By \cite[Lemma 3.14]{BrLBSa} we have the following Lemma.
\begin{lemma}\label{lemma:dfrak formel}
    Let $N\geq 2$ be an integer and $\bx,\by\in\ZZ^N$ be linearly independent. We have
    \begin{equation*}
        \dfrak_2(\bx,\by)=\frac{\det\big(\ZZ\bx\oplus\ZZ\by\big)}{\Gcal(\bx,\by)}.
    \end{equation*}
\end{lemma}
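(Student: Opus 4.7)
\textbf{Proof plan for Lemma \ref{lemma:dfrak formel}.} The plan is to identify $\dfrak_2(\bx,\by)$ with the determinant of the primitive closure $M_0 := \Span_{\RR}(\bx,\by)\cap\ZZ^N$, and then relate $\det(M_0)$ to $\det(\ZZ\bx\oplus\ZZ\by)$ and $\Gcal(\bx,\by)$ via a change of basis.

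First, I would verify that $\dfrak_2(\bx,\by) = \det(M_0)$. Any rank $2$ sublattice $\Lambda\subset\ZZ^N$ containing $\bx,\by$ satisfies $\Span_{\RR}(\Lambda)=\Span_{\RR}(\bx,\by)$, as both are two-dimensional and contain the linearly independent pair $\bx,\by$. Hence $\Lambda\subseteq\Span_{\RR}(\bx,\by)\cap\ZZ^N = M_0$. Since both lattices have rank $2$, the index $[M_0:\Lambda]$ is a positive integer and $\det(\Lambda) = [M_0:\Lambda]\cdot\det(M_0)\geq\det(M_0)$, with equality achieved by $\Lambda=M_0$ itself. Thus the minimum in Definition \ref{def:dfrak} is exactly $\det(M_0)$.

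Next, I would fix a $\ZZ$-basis $\bald_1,\bald_2$ of $M_0$ and express $\bx = a_{11}\bald_1+a_{12}\bald_2$, $\by = a_{21}\bald_1+a_{22}\bald_2$ with integer matrix $A=(a_{ij})$. Letting $B$ be the $N\times 2$ matrix with columns $\bald_1,\bald_2$, the $N\times 2$ matrix with columns $\bx,\by$ equals $BA^T$. By the Cauchy--Binet formula each $2\times 2$ minor of $BA^T$ is $\det(A)$ times the corresponding minor of $B$, which yields
\begin{equation*}
\Gcal(\bx,\by) = |\det(A)|\cdot\Gcal(\bald_1,\bald_2), \qquad \det(\ZZ\bx\oplus\ZZ\by) = |\det(A)|\cdot\det(M_0),
\end{equation*}
where the second identity follows from $(BA^T)^T(BA^T) = AB^TBA^T$ and the definition in (\ref{def:det of a lattice}).

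The key remaining step is to show $\Gcal(\bald_1,\bald_2)=1$. This exploits primitivity: since $M_0$ is a primitive sublattice of $\ZZ^N$, the quotient $\ZZ^N/M_0$ is torsion-free, so $M_0$ is a direct summand and $(\bald_1,\bald_2)$ extends to a $\ZZ$-basis of $\ZZ^N$. Equivalently, the projection onto the first two basis vectors provides an integer matrix $C\in M_{2\times N}(\ZZ)$ satisfying $CB=I_2$. Applying Cauchy--Binet to this identity expresses $1=\det(CB)$ as an integer combination of the $2\times 2$ minors of $B$, forcing $\Gcal(\bald_1,\bald_2)=1$. Combining everything gives $\Gcal(\bx,\by)=|\det(A)|$ and thus $\det(M_0) = \det(\ZZ\bx\oplus\ZZ\by)/\Gcal(\bx,\by)$, which is the claimed formula. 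The only non-formal step is the primitivity argument yielding $\Gcal(\bald_1,\bald_2)=1$; the rest is routine linear algebra via Cauchy--Binet.
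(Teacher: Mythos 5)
Your proof is correct and complete. The paper itself gives no argument for this lemma, deferring entirely to the cited reference, and your reduction to the primitive closure $M_0=\Span_{\RR}(\bx,\by)\cap\ZZ^N$ followed by the Cauchy--Binet computation of the index $[M_0:\ZZ\bx\oplus\ZZ\by]=\Gcal(\bx,\by)$ is exactly the standard route taken there; every step, including the primitivity argument forcing $\Gcal(\bald_1,\bald_2)=1$, checks out.
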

In particular we have that
\begin{equation}\label{Equation:dfrak=det(Gamma)}
    \dfrak_2(\bx,\by)=\det(\Lambda_{\bx}\cap\Lambda_{\by})
\end{equation}

\subsection{The average size of some determinants}$~$
We close out this section by giving bounds on the number of lattices with determinants of a certain size. For $X,Y,\Delta\geq 2$ we let
\begin{equation}\label{def:l(X,Y,Delta)}
    l(X,Y,\Delta)=\#\Bigg\{(\bx,\by)\in\Pri\times\Pri\,:\begin{array}{l}
         \dim(\text{Span}_{\RR}(\{\bx,\by\}))=2  \\
         \|\bx\|\leq X,\,\|\by\|\leq Y\\
         \dfrak_2(\bx,\by)\leq \Delta
    \end{array}
    \Bigg\}
\end{equation}
Note that this definition differs from \cite[Definition (3.24)]{BrLBSa} not only by fixing $r$ and $n$, but also by only considering vectors $\bx,\,\by$ that consist of primes. Also, by (\ref{Equation:dfrak=det(Gamma)}), this function actually counts pairs $\bx,\by$ in some box, such that $\det(\Lambda_{\bx}\cap\Lambda_{\by})$ is small. This will be of use in Section \ref{section:proof_of_variance_bound}. The main result of this subsection is a bound on $l(X,Y,\Delta)$ in the form of \cite[Lemma 3.21]{BrLBSa}. However in our setting that result does not suffice. Fortunately there is work of Holdridge available, dealing with cases including our case. The following lemma is a special case of \cite[Lemma 3.5]{Hol}.
\begin{lemma}\label{lemma:Holdridge l(X,Y) bound}
    Suppose $\eta>0$ and $3\leq Y^{\eta}\leq X\leq Y$. Then for all $\epsilon>0$ we have 
    \begin{equation*}
        l(X,Y,\Delta)\ll_{\eta}\frac{(XY)^2}{(\log X)^4(\log Y)^4}\Delta^2(\max\{1,\log\log\log Y\})^8+(XY)^2\Delta^{3/2}.
    \end{equation*}
\end{lemma}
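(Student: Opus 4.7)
The plan is to follow the parametrization approach of Browning--Le Boudec--Sawin, upgraded with Holdridge's refinement that handles sieve weights on short lattice sums. Given a pair $(\bx, \by)$ of linearly independent prime-coordinate vectors counted by $l(X, Y, \Delta)$, I would let $M = \Span_{\RR}(\bx, \by) \cap \ZZ^4$ be the unique primitive rank-$2$ sublattice saturating $\ZZ\bx \oplus \ZZ\by$. Lemma \ref{lemma:dfrak formel} together with \eqref{Equation:dfrak=det(Gamma)} gives $\det M = \dfrak_2(\bx, \by) \le \Delta$, so
\[
l(X, Y, \Delta) \;\le\; \sum_{\substack{M \subset \ZZ^4 \text{ primitive rank } 2 \\ \det M \le \Delta}} S(M, X)\, S(M, Y),
\]
where $S(M, Z)$ counts the prime-coordinate vectors $\bp \in M \cap \Ball_4(Z)$.

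Next I would derive a uniform upper bound for $S(M, Z)$. Fixing a Minkowski-reduced basis $\bv_1, \bv_2$ of $M$ with $\|\bv_i\| \asymp \lambda_i(M)$, write $\bp = a\bv_1 + b\bv_2$ and apply a Selberg-type upper bound sieve to the four linear forms in $(a,b)$ giving the coordinates of $\bp$. In the non-degenerate regime $\lambda_2(M) \le Z^{1-\eta'}$ (for a small $\eta' > 0$ depending on $\eta$), the range for $(a,b)$ is a box of area $\asymp Z^2/\det M$ long enough in both directions to sustain a four-dimensional sieve, producing
\[
S(M, Z) \;\ll\; \frac{Z^2}{\det(M)\,(\log Z)^4} \cdot (\log\log\log Z)^4.
\]
The triple-log factor arises from bounding the lattice-dependent singular series uniformly; for lattices whose reductions modulo small primes force some coordinate form to degenerate, the sieve weights must be trimmed, and this is where Holdridge's refinement enters. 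In the degenerate regime $\lambda_2(M) > Z^{1-\eta'}$, Lemma \ref{Lemma:Degenerate point count} already forces $S(M, Z)$ to be very small, and a crude sieve-free bound suffices.

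To finish, I would stratify the outer sum by $d = \det M$. Primitive rank-$2$ sublattices of $\ZZ^4$ correspond to primitive integer points on the Grassmannian $G(2,4) \subset \PP^5$ via Plücker coordinates, and a routine count shows that the number with $\det M \asymp d$ is of order $d^3$ up to lower-order terms. In the non-degenerate regime the resulting contribution is bounded by
\[
\sum_{d \le \Delta} \frac{(XY)^2 (\log\log\log Y)^8}{d^2 (\log X)^4 (\log Y)^4} \cdot d^{3} \;\ll\; \frac{(XY)^2 \Delta^{2}}{(\log X)^4 (\log Y)^4}(\log\log\log Y)^8,
\]
matching the first term. Degenerate lattices live on a lower-dimensional subvariety of $G(2,4)$; running the sieve on one side and a trivial Lemma \ref{Lemma:Degenerate point count} bound on the other yields the $(XY)^2 \Delta^{3/2}$ contribution, the $3/2$ reflecting the codimension-one nature of degenerate Plücker data. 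The hypothesis $Y^{\eta} \le X \le Y$ is what permits a uniform balancing between the two factors.

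The hardest step will be the uniform $(\log\log\log Y)^8$ control on the sieve. A generic Selberg sieve over a lattice-restricted set has local factors that depend delicately on $M$ modulo small primes, and preventing these factors from blowing up on a sparse set of exceptional lattices is precisely the technical content of \cite[Lemma 3.5]{Hol}. Combining this uniform sieve bound with the sharp Plücker count and the separate treatment of degenerate lattices is the technical heart of the argument.
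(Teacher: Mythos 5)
The paper does not actually prove this lemma: it is quoted verbatim as the special case $n=3$, $r=2$ of Holdridge's Lemma 3.5, so the only in-paper "proof" is that citation. Your sketch reconstructs the right overall architecture for an actual proof (stratify pairs by the primitive rank-$2$ sublattice $M$ they span, note $\det M=\dfrak_2(\bx,\by)\le\Delta$ via Lemma \ref{lemma:dfrak formel}, sieve for prime coordinates inside each $M$, and sum over $M$ using Schmidt's count of primitive sublattices of bounded determinant), and your final summation $\sum_{d\le\Delta}d^{3}\cdot d^{-2}\asymp\Delta^{2}$ does reproduce the shape of the main term.

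However, the central step as you state it fails. You assert a \emph{uniform} bound $S(M,Z)\ll Z^{2}\det(M)^{-1}(\log Z)^{-4}(\log\log\log Z)^{4}$ and attribute the triple-log factor to "bounding the lattice-dependent singular series uniformly". Writing $\bp=a\bv_1+b\bv_2$, the four coordinates are homogeneous linear forms in $(a,b)$; at any prime $p$ dividing one of the six $2\times2$ minors of the matrix with columns $\bv_1,\bv_2$, two of these forms become proportional mod $p$ and the corresponding local factor of the sieve's singular series is $1+1/p+O(p^{-2})$ rather than $1+O(p^{-2})$. Since those minors (which are of size up to $\det M\le\Delta\le XY$) can be highly composite, the singular series can be as large as a power of $\log\log Z$ for some lattices $M$, so no uniform $(\log\log\log Z)^{O(1)}$ bound is available; the exceptional lattices must instead be controlled on average over $M$ or isolated into the secondary term. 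Deferring exactly this point to "the technical content of \cite[Lemma 3.5]{Hol}" is circular, since that lemma is the statement you are meant to prove. Relatedly, your account of the $(XY)^2\Delta^{3/2}$ term via "codimension-one Plücker data" is not the operative mechanism: in arguments of this type the secondary term arises from lattices with unbalanced successive minima (small $\lambda_1(M)$), for which the point count in $\Ball_4(Z)$ exceeds $Z^2/\det M$ by a term of order $Z/\lambda_1(M)$ and the sieve box is too thin in one direction; one must count such lattices separately and actually verify the exponent $3/2$, which your sketch does not attempt.
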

This follows directly from choosing $n=3$, $r=2$ in Lemma 3.5 of \cite{Hol}.


\section{The variance upper bound}\label{section:proof_of_variance_bound}$~$

In this section we establish Proposition \ref{prop:variance_upper_bound}. Following \cite[Section 4]{BrLBSa}, we use Section \ref{subsection:volume estimate} to state volume estimates and estimates on certain sums over inverses of lattice determinants. In Section \ref{subsection:second moment bounds}, we combine the results of Sections \ref{section:geometry_of_numbers} and \ref{subsection:volume estimate} to obtain asymptotic formulas for second moments of the counting function $N_{\bfa}(B)$ and the local counting function $N_{\bfa}^{\loc}(B)$. Lastly we give bounds on certain first moments in Section \ref{section:first moment bounds} to finally obtain Proposition \ref{prop:variance_upper_bound}.

\subsection{Volume estimates and sums of determinants}\label{subsection:volume estimate}$~$
We start this section by reviewing some more definitions and lemmas of \cite{BrLBSa} (\cite[(4.1),(4,2),Lemma 4.2, Lemma 4.3]{BrLBSa}).
Let $N\geq 1$ be an integer and recall the definition (\ref{def:V_N}) of $V_N$. For $N\geq 2$ and $\bw,\bz\in\RR^N$, let
\begin{equation}\label{def:Ical volume}
    \Ical(\bw,\bz)=\vol\big(\big\{\bolt\in(\RR\bw)^{\perp}:|\langle\bz,\bolt\rangle|\leq\|\bolt\|\leq 1\big\}\big),
\end{equation}
and
\begin{equation*}
    \delta_{\bw,\bz}=\|\bw\|^2\|\bz\|^2-\langle\bw,\bz\rangle^2.
\end{equation*}
\begin{lemma}\label{lemma:Ical volume formula}
    Let $N\geq 3$ and let $\bw,\bz\in\RR^N$ be linearly independent. Then it is
    \begin{equation*}
        \Ical(\bw,\bz)=2\frac{N-2}{N-1}V_{N-2}\frac{\|\bw\|}{\delta_{\bw,\bz}^{1/2}}\Bigg(1+O\Big(\min\Big\{1,\frac{\|\bw\|^2}{\delta_{\bw,\bz}}\Big\}\Big)\Bigg),
    \end{equation*}
    where the implied constant depends at most on $N$.
\end{lemma}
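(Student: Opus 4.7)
The plan is to reduce the computation to a one-parameter radial integral inside the hyperplane $(\RR\bw)^{\perp}$. Decompose $\bz=\bz_{\parallel}+\bz_{\perp}$ with $\bz_{\parallel}\in\RR\bw$ and $\bz_{\perp}\in(\RR\bw)^{\perp}$, so that $\langle\bz,\bolt\rangle=\langle\bz_{\perp},\bolt\rangle$ for every $\bolt\in(\RR\bw)^{\perp}$ and $\|\bz_{\perp}\|^{2}=\|\bz\|^{2}-\langle\bw,\bz\rangle^{2}/\|\bw\|^{2}=\delta_{\bw,\bz}/\|\bw\|^{2}$; this is strictly positive because $\bw$ and $\bz$ are linearly independent. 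Set $c:=\|\bw\|/\delta_{\bw,\bz}^{1/2}=1/\|\bz_{\perp}\|$ and $\zeta:=\bz_{\perp}/\|\bz_{\perp}\|$. The region defining $\Ical(\bw,\bz)$ is then the subset of $(\RR\bw)^{\perp}$ cut out by $\|\bolt\|\le 1$ and $|\langle\zeta,\bolt\rangle|\le c\|\bolt\|$, and in particular $\Ical(\bw,\bz)$ depends on $\bw,\bz$ only through the scalar $c$ and the dimension.

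If $c\ge 1$, Cauchy--Schwarz makes the second constraint automatic, so $\Ical(\bw,\bz)=V_{N-1}\ll_{N} c\, V_{N-2}$; since $\min(1,\|\bw\|^{2}/\delta_{\bw,\bz})=1$ in that range, this is consistent with the claim read as an upper bound. For $c<1$, I would pick an orthonormal basis $\zeta,e_{2},\dots,e_{N-1}$ of $(\RR\bw)^{\perp}$, write $\bolt=s\zeta+\bolt^{\perp}$ with $\rho:=\|\bolt^{\perp}\|$, and integrate $\bolt^{\perp}$ over its $(N-3)$-sphere of surface area $(N-2)V_{N-2}\rho^{N-3}$ (using $N\ge 3$). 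The constraints $s^{2}+\rho^{2}\le 1$ and $s^{2}\le c^{2}(s^{2}+\rho^{2})$ combine into $|s|\le\min\!\big(c\rho/\sqrt{1-c^{2}},\sqrt{1-\rho^{2}}\big)$, and the two bounds coincide at $\rho^{\star}:=\sqrt{1-c^{2}}$, yielding
\[
\Ical(\bw,\bz)=2(N-2)V_{N-2}\Bigg[\int_{0}^{\rho^{\star}}\frac{c\rho^{N-2}}{\sqrt{1-c^{2}}}\,d\rho+\int_{\rho^{\star}}^{1}\sqrt{1-\rho^{2}}\,\rho^{N-3}\,d\rho\Bigg].
\]

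The first integral evaluates exactly to $c(1-c^{2})^{(N-2)/2}/(N-1)=(c/(N-1))(1+O_{N}(c^{2}))$, producing the main term $\tfrac{2(N-2)}{N-1}V_{N-2}c$. On the interval $[\rho^{\star},1]$ we have $\sqrt{1-\rho^{2}}\le c$, $\rho^{N-3}\le 1$, and length $1-\sqrt{1-c^{2}}\le c^{2}$, so the remainder contributes $O_{N}(c^{3})=O_{N}(c\cdot c^{2})$, precisely the claimed error. Substituting $c=\|\bw\|/\delta_{\bw,\bz}^{1/2}$ and $c^{2}=\|\bw\|^{2}/\delta_{\bw,\bz}$ closes the argument. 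The only real subtlety is the geometric reduction in the first step; once one recognises that $\Ical$ depends on $\bw,\bz$ only through $c$ and identifies the splitting radius $\rho^{\star}$, both the main-term integral and the crude bound on the remainder are elementary.
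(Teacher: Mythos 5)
Your computation is correct and complete: the reduction of $\Ical(\bw,\bz)$ to the single parameter $c=\|\bw\|/\delta_{\bw,\bz}^{1/2}$ via the orthogonal decomposition of $\bz$, the identification of the splitting radius $\rho^{\star}=\sqrt{1-c^{2}}$, the exact evaluation of the inner integral as $c(1-c^{2})^{(N-2)/2}/(N-1)$, and the crude $O_N(c^{3})$ bound on the cap near $\rho=1$ all check out, including the degenerate case $N=3$ (where the $(N-3)$-sphere is the two-point set and $(N-2)V_{N-2}=2$) and the regime $c\geq 1$, where the asserted formula indeed carries no content beyond an upper bound. The paper does not prove this lemma at all — it is quoted from Lemma 4.2 of the cited reference, with the proof deferred there — and your argument is essentially the standard derivation one finds in that source, so there is nothing to flag.
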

Similarly, for $N\geq 2$ and $\bw,\bz\in\RR^N$, we let
\begin{equation}\label{def:Jcal volume}
    \Jcal(\bw,\bz)=\vol\big(\big\{\bolt\in\RR^N:|\langle\bw,\bolt\rangle|,|\langle,\bz,\bolt\rangle|\leq \|\bolt\|\leq 1\big\}\big).
\end{equation}
\begin{lemma}\label{lemma:Jcal volume formula}
    Let $N\geq 3$ and let $\bw,\bz\in\RR^N$ be linearly independent. Then we have that
    \begin{equation*}
        \Jcal(\bw,\bz)=4\frac{N-2}{N}V_{N-2}\frac{1}{\delta_{\bw,\bz}^{1/2}}\Bigg(1+O\Big(\min\big\{1,\frac{(\|\bw\|+\|\bz\|)^2}{\delta_{\bw,\bz}}\Big\}\Big)\Bigg).
    \end{equation*}
\end{lemma}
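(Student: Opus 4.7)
The approach parallels that of Lemma \ref{lemma:Ical volume formula}: I would reduce the $N$-dimensional integral to a $2$-dimensional one over $V := \Span_{\RR}(\bw, \bz)$ by integrating out $V^{\perp}$. Write $\bolt = \buld + \bvld$ with $\buld \in V$ and $\bvld \in V^{\perp}$. Since $\langle \bw, \bolt\rangle$ and $\langle \bz, \bolt\rangle$ depend only on $\buld$, the constraints on $\bvld$ for fixed $\buld$ reduce to asking that $\|\bvld\|^2$ lie in the interval $[\max(0, M(\buld)^2 - \|\buld\|^2),\ 1 - \|\buld\|^2]$, where $M(\buld) := \max(|\langle \bw, \buld\rangle|, |\langle \bz, \buld\rangle|)$. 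Integrating the volume of this spherical annulus in $V^{\perp} \cong \RR^{N-2}$ yields
\begin{equation*}
    \Jcal(\bw, \bz) = V_{N-2} \int_V \bigl[(1 - \|\buld\|^2)^{(N-2)/2} - \max(0, M(\buld)^2 - \|\buld\|^2)^{(N-2)/2}\bigr]_+ d\buld.
\end{equation*}

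Next I would parametrize $V$ by the linear coordinates $(s, u) := (\langle \bw, \buld\rangle, \langle \bz, \buld\rangle)$. A direct Jacobian computation shows that $d\buld = \delta_{\bw, \bz}^{-1/2}\, ds\, du$, and that $\|\buld\|^2 = Q(s, u)$ for a positive-definite quadratic form $Q$ whose eigenvalues are bounded above by $(\|\bw\|^2 + \|\bz\|^2)/\delta_{\bw, \bz}$. In the regime $(\|\bw\|+\|\bz\|)^2 \ll \delta_{\bw, \bz}$, the form $Q$ is uniformly small on $[-1, 1]^2$. Taylor-expanding $(1 - Q)^{(N-2)/2}$ and $(M_{s,u}^2 - Q)^{(N-2)/2}$, with $M_{s,u} := \max(|s|, |u|)$, about $Q = 0$ reduces the integrand to $1 - M_{s,u}^{N-2}$ up to a pointwise relative error of size $O((\|\bw\|+\|\bz\|)^2/\delta_{\bw,\bz})$, and the effective domain becomes the cube $[-1, 1]^2$. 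A short computation then gives
\begin{equation*}
    \iint_{[-1,1]^2}(1 - \max(|s|, |u|)^{N-2})\, ds\, du = 4 - \frac{8}{N} = \frac{4(N-2)}{N},
\end{equation*}
which produces the claimed main term.

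In the complementary regime $(\|\bw\|+\|\bz\|)^2 \gtrsim \delta_{\bw, \bz}$, the claimed error factor is $O(1)$, and it suffices to verify the upper bound $\Jcal(\bw, \bz) \ll V_{N-2}/\delta_{\bw,\bz}^{1/2}$. This follows immediately from the trivial bound on the integrand by $1$ together with the observation that the area in $V$ of the region $\{\buld \in V : M(\buld) \leq 1\}$ equals $4/\delta_{\bw, \bz}^{1/2}$, as it corresponds to the square $[-1,1]^2$ of area $4$ in $(s, u)$-coordinates.

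The hard part will be the quantitative error analysis in the main-term regime when $N = 3$, where the derivative of $(M^2 - Q)^{(N-2)/2} = (M^2 - Q)^{1/2}$ is unbounded as $M \to 0$. I would handle this by splitting the $(s, u)$-integration according to whether $M_{s,u}^2 \geq 2 Q(s,u)$ or not: on the former region the expansion $(M^2 - Q)^{(N-2)/2} = M^{N-2}(1 + O(Q/M^2))$ is uniformly controlled, while on the latter region (which is concentrated near the origin and has area $\ll (\|\bw\|+\|\bz\|)^2/\delta_{\bw, \bz}$ by the eigenvalue bound on $Q$) one bounds both terms in the integrand separately by $1$ and absorbs the contribution into the error budget.
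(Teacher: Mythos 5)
Your argument is correct: the reduction to a two-dimensional integral over $\Span_{\RR}(\bw,\bz)$ by integrating out the orthogonal complement, the change of variables $(s,u)=(\langle\bw,\buld\rangle,\langle\bz,\buld\rangle)$ with Jacobian $\delta_{\bw,\bz}^{-1/2}$ coming from the Gram determinant, the evaluation $\iint_{[-1,1]^2}(1-\max(|s|,|u|)^{N-2})\,ds\,du=4(N-2)/N$, and the trivial bound $\Jcal(\bw,\bz)\le 4V_{N-2}\delta_{\bw,\bz}^{-1/2}$ in the complementary regime all check out, and your handling of the $N=3$ square-root singularity is sound (indeed, when $(\|\bw\|+\|\bz\|)^2/\delta_{\bw,\bz}$ is small the cone $\{M_{s,u}^2<2Q\}$ is actually empty, so that case is vacuous). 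The paper itself supplies no proof of this lemma, deferring to \cite[Lemma 4.3]{BrLBSa}; your computation is essentially the standard argument used there.
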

\vspace{0.5cm}
Proofs of these lemmas can be found in the reference cited above. The rest of this section is devoted to estimating  sums over inverses of lattice determinants. For this, let $\bx,\by\in\ZZ^4_{\prim}$ be two linearly independent vectors. By Lemma \ref{lemma:determinant of Lambda}, the lattice $\Lambda_{\bx}\cap\Lambda_{\by}$ is primitive of rank 2. This lattice will be important in Section \ref{subsection:second moment bounds} and we need to understand the size of its determinant at least on average. As a direct consequence of Lemma \ref{lemma:determinant of Lambda}, we have the following upper bound for its determinant.
\begin{cor}\label{corollary:Gamma_xy det bound}
    Let $N\geq 3$ be an integer and $\bx,\by\in\ZZ^N_{\prim}$ linearly independent. Then we have that
    \begin{equation*}
    \det\big(\Lambda_{\bx}\cap\Lambda_{\by}\big)\leq\|\bx\|\|\by\|.
    \end{equation*}
\end{cor}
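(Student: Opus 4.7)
The plan is to combine the explicit determinant formula from Lemma \ref{lemma:determinant of Lambda} with a Gram determinant computation and Cauchy--Schwarz. Since $\bx, \by \in \ZZ^N_{\prim}$ are linearly independent, Lemma \ref{lemma:determinant of Lambda} applies with $k = 2$, yielding
\begin{equation*}
    \det\big(\Lambda_{\bx}\cap\Lambda_{\by}\big) = \frac{\det(\ZZ\bx\oplus\ZZ\by)}{\Gcal(\bx,\by)}.
\end{equation*}

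Next, I would compute the numerator directly from definition (\ref{def:det of a lattice}). Letting $\mathbf{B}$ denote the $N \times 2$ matrix with columns $\bx$ and $\by$, the Gram matrix $\mathbf{B}^T \mathbf{B}$ has entries $\|\bx\|^2$, $\langle \bx,\by\rangle$, $\|\by\|^2$, so
\begin{equation*}
    \det(\ZZ\bx \oplus \ZZ\by)^2 = \|\bx\|^2 \|\by\|^2 - \langle \bx, \by\rangle^2 \leq \|\bx\|^2 \|\by\|^2
\end{equation*}
by the Cauchy--Schwarz inequality. Taking square roots gives $\det(\ZZ\bx \oplus \ZZ\by) \leq \|\bx\| \|\by\|$.

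Finally, I would observe that $\Gcal(\bx,\by) \geq 1$: it is the gcd of the $2\times 2$ minors of an integer matrix, and linear independence of $\bx, \by$ guarantees at least one non-zero minor, so the gcd is a positive integer. Dividing the bound on the numerator by $\Gcal(\bx,\by) \geq 1$ completes the proof. There is no real obstacle here — the only slight subtlety is recording why $\Gcal(\bx,\by)$ is at least $1$, which is immediate from linear independence — so the corollary follows as a short, clean consequence of the two inputs already assembled in this subsection.
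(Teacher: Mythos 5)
Your proof is correct and matches the paper's argument: both apply Lemma \ref{lemma:determinant of Lambda}, bound the denominator $\Gcal(\bx,\by)\geq 1$, and evaluate $\det(\ZZ\bx\oplus\ZZ\by)$ via the Gram determinant and Cauchy--Schwarz. Nothing further is needed.
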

\begin{proof}
    By Lemma \ref{lemma:determinant of Lambda}, it is
    \begin{equation*}
        \det\big(\Lambda_{\bx}\cap\Lambda_{\by}\big)\leq \det\big(\ZZ\bx\oplus\ZZ\by\big).
    \end{equation*}
    Since the vectors $\bx$ and $\by$ form a basis of the lattice $\ZZ\bx\oplus\ZZ\by$, we can compute its determinant (\ref{def:det of a lattice}) to be
    \begin{equation*}
        \det\big(\ZZ\bx\oplus\ZZ\by\big)^2=\det\Bigg(
        \begin{pmatrix}
            \|\bx\|^2 & \langle\bx,\by\rangle \\
            \langle\bx,\by\rangle & \|\by\|^2
        \end{pmatrix}
    \Bigg) = \|\bx\|^2\|\by\|^2-\langle\bx,\by\rangle^2\leq \|\bx\|^2\|\by\|^2.
    \end{equation*}
\end{proof}
The lattice $\Lambda_{\bx}\cap\Lambda_{\by}$ is only of rank $2$ when $\bx$ and $\by$ are linearly independent, else it is of rank $\geq 3$. In particular, the determinant $\det(\Lambda_{\bx}\cap\Lambda_{\by})$ is $0$ if $\bx$ and $\by$ are linearly dependent as can be seen in the proof above. Let $B\geq 2$ and recall the definition (\ref{defn:P(B)}) of $\Pri(B)$. We set
\begin{equation}\label{def:Omega(B)}
    \Omega(B)=\Big\{ (\bx,\by)\in \Pri(B)\times\Pri(B): \bx\neq\by\Big\}.
\end{equation}
If $(\bx,\by)\in\Omega(B)$, then $\bx$ and $\by$ are linearly independent since their entries consist of primes. Hence we can let
\begin{equation}\label{defn:E(B)}
    E(B)=(\log B)^8\sum_{(\bx,\by)\in\Omega(B)}\frac{1}{\det(\Lambda_{\bx}\cap\Lambda_{\by})}.
\end{equation}
We proceed to give a lower and an upper bound for $E(B)$.
\begin{lemma}\label{lemma:E(B) upper and lower bounds}
    For $B\geq 3$ we have
    \begin{equation*}
        B^6\ll E(B)\ll B^6\max\{1,\log\log\log B\}^8.
    \end{equation*}
\end{lemma}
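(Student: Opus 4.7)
My plan is to prove the two bounds separately.

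For the lower bound, I would apply Corollary \ref{corollary:Gamma_xy det bound} termwise: every $\bx,\by\in\Pri(B)$ satisfies $\|\bx\|,\|\by\|\leq 2B$, so that $\det(\Lambda_{\bx}\cap\Lambda_{\by})\leq 4B^2$. Combined with the prime number theorem bound $\#\Pri(B)=\pi(B)^4\gg B^4/(\log B)^4$, and therefore $\#\Omega(B)\gg B^8/(\log B)^8$, this gives
\begin{equation*}
E(B)\geq (\log B)^8\cdot\frac{\#\Omega(B)}{4B^2}\gg B^6.
\end{equation*}

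For the upper bound, I would dyadically decompose the sum according to the size of $\det(\Lambda_{\bx}\cap\Lambda_{\by})$, using the identification (\ref{Equation:dfrak=det(Gamma)}) with $\dfrak_2(\bx,\by)$. Since $\Lambda_{\bx}\cap\Lambda_{\by}$ is a rank $2$ primitive integer lattice, the Gram matrix of any basis has integer entries and positive determinant, so $\det(\Lambda_{\bx}\cap\Lambda_{\by})\geq 1$. Combined with Corollary \ref{corollary:Gamma_xy det bound}, this confines the determinants to $[1,4B^2]$ and yields $O(\log B)$ dyadic ranges $\Delta/2<\dfrak_2(\bx,\by)\leq\Delta$. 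The number of pairs in each range is bounded by $l(B,B,\Delta)$ as in (\ref{def:l(X,Y,Delta)}); applying Lemma \ref{lemma:Holdridge l(X,Y) bound} with $X=Y=B$ and $\eta=1$, the contribution of one such range to $E(B)$ is at most
\begin{equation*}
(\log B)^8\cdot\frac{2\,l(B,B,\Delta)}{\Delta}\ll B^4\Delta(\max\{1,\log\log\log B\})^8+B^4(\log B)^8\Delta^{1/2}.
\end{equation*}
Summing over dyadic $\Delta\leq O(B^2)$ is a geometric series dominated by its largest term, producing the two contributions $B^6(\max\{1,\log\log\log B\})^8$ and $B^5(\log B)^8$; the latter is absorbed into the former for $B$ large, yielding the claimed bound.

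The crucial input is the quality of the estimate on $l(B,B,\Delta)$: the strength of Lemma \ref{lemma:Holdridge l(X,Y) bound} (Holdridge's sharpening over \cite{BrLBSa}) is precisely what makes the loss a mere factor of $(\log\log\log B)^8$ rather than a power of $\log B$, and without the rank-$2$ lower bound $\det\geq 1$ one would pick up spurious $\log$-factors from ranges $\Delta<1$. Beyond these two points, everything reduces to routine dyadic book-keeping.
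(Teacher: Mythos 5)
Your proof is correct and follows essentially the same route as the paper: the lower bound via Corollary \ref{corollary:Gamma_xy det bound} together with the prime number theorem, and the upper bound via a dyadic decomposition in the size of $\dfrak_2(\bx,\by)$ combined with Lemma \ref{lemma:Holdridge l(X,Y) bound}. The only difference is a mild streamlining: by taking $X=Y=B$ with $\eta=1$ you avoid the paper's extra dyadic decomposition in $\|\bx\|,\|\by\|$ and its auxiliary appeal to \cite[Lemma 3.21]{BrLBSa} for the range $X<Y^{\eta}$.
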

\begin{proof}
    We start with the lower bound. By the previous corollary it is
    \begin{equation*}
        E(B)\geq(\log B)^8\sum_{(\bx,\by)\in\Omega(B)}\frac{1}{\|\bx\|\|\by\|}\geq \frac{(\log B)^8}{B^2}\#\Omega(B).
    \end{equation*}
    Note that $\Omega(B)$ consists of all tuples of vectors of primes up to size $B$ where only diagonal is taken out. Thus, if we let $\pi(X)$ be the number of primes up to $X$, it is
    \begin{equation*}
        \#\Omega(B)\geq \pi(B)^8-\pi(B)^4\gg \frac{B^8}{(\log B)^8}
    \end{equation*}
    for $B$ large enough. This confirms the lower bound. For the upper bound, we make use of Lemma \ref{lemma:Holdridge l(X,Y) bound} by cutting the sum into dyadic pieces. Recall the definition (\ref{def:l(X,Y,Delta)}) of $l(X,Y,\Delta)$. We have
    \begin{align*}
        E(B)\ll (\log B)^8\sum_{X\leq Y\ll B}\sum_{\Delta\ll XY}\frac{1}{\Delta}l(X,Y,\Delta).
    \end{align*}
    Fix a $0<\eta<1$. Then, by Lemma \ref{lemma:Holdridge l(X,Y) bound}, for $Y^{\eta}\leq X\leq Y$ large enough, we have that $l(X,Y,\Delta)$ is bounded by $(XY\Delta)^2(\log X)^{-4}(\log Y)^{-4}(\log\log\log Y)^8$. By using \cite[Lemma 3.21]{BrLBSa} (with $n=3$ and $r=2$) in the remaining cases, we get
    \begin{align*}
        \frac{E(B)}{(\log B)^8}&\ll\sum_{Y^{\eta}\leq X\leq Y\ll B}\sum_{\Delta\ll XY}\frac{1}{\Delta}l(X,Y,\Delta)
        +\sum_{X< Y^{\eta}\ll B}\sum_{\Delta\ll XY}\frac{1}{\Delta}l(X,Y,\Delta).\\
        &\ll\sum_{Y^{\eta}\leq X\leq Y\ll B}(XY)^3(\log X)^{-4}(\log Y)^{-4}(\log\log\log Y)
        +\sum_{X<Y^{\eta}\ll B}(XY)^3\\
        &\ll B^6(\log B)^{-8}(\log\log\log B)^8+B^{3(1+\eta)}.
    \end{align*}
    This completes the proof.
\end{proof}
As done in \cite{BrLBSa}, we next give a bound on a certain weighted average over the inverses of lattice determinants. Let $\bx,\by\in\ZZ^4$ linearly independent and set
\begin{equation}\label{def:Delta(x,y)}
    \Delta(\bx,\by)=\frac{\|\bx\|\|\by\|}{\det(\ZZ\bx\oplus\ZZ\by)}.
\end{equation}
Now recall the definitions (\ref{defn:alpha}), (\ref{def:w}) and (\ref{def:W}) of $\alpha, w$ and $W$ and let
\begin{equation*}
    \text{rad}(W)=\prod_{p\leq w}p
\end{equation*}
the radical of $W$. Let
\begin{equation}\label{def:Ecal_x,y}
    \Ecal_{\bx,\by}(B)=\min\Big\{1,\frac{\Delta(\bx,\by)^2}{\alpha^2}\Big\}+\textbf{1}_{\Gcal(\bx,\by)\nmid W/\text{rad}(W)}.
\end{equation}
The next lemma is then concerned with a saving on 
\begin{equation}\label{def:F(B)}
F(B)=(\log B)^8\sum_{(\bx,\by)\in\Omega(B)}\frac{\Ecal_{\bx,\by}(B)}{\det(\Lambda_{\bx}\cap\Lambda_{\by})}
\end{equation}
over the trivial bound from the previous lemma.
\begin{lemma}\label{lemma:F(B) saving}
    Let $B\geq 2$ and $\epsilon>0$. We have
    \begin{equation*}
        F(B)\ll_{\epsilon}\frac{B^6}{(\log\log B)^{1/2-\epsilon}}.
    \end{equation*}
\end{lemma}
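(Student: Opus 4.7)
The strategy is to decompose the summand in $F(B)$ according to the two terms of $\Ecal_{\bx,\by}(B)$. Writing
\[
F(B) \leq F_1(B) + F_2(B),
\]
where $F_1$ corresponds to $\min\{1,\Delta(\bx,\by)^2/\alpha^2\}$ and $F_2$ to $\textbf{1}_{\Gcal(\bx,\by) \nmid W/\text{rad}(W)}$, I will bound each piece using a dyadic decomposition and Lemma~\ref{lemma:Holdridge l(X,Y) bound}, following the strategy of the proof of Lemma~\ref{lemma:E(B) upper and lower bounds}. Throughout I use the identities $\det(\Lambda_{\bx}\cap\Lambda_{\by}) = \dfrak_2(\bx,\by)$ and $\det(\ZZ\bx\oplus\ZZ\by) = \Gcal(\bx,\by)\dfrak_2(\bx,\by)$, which give $\Delta(\bx,\by) = \|\bx\|\|\by\|/(\Gcal(\bx,\by)\dfrak_2(\bx,\by))$ and $1/\dfrak_2(\bx,\by) = \Gcal(\bx,\by)/\det(\ZZ\bx\oplus\ZZ\by)$.

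For $F_1(B)$, dyadically decompose over $X \asymp \|\bx\|$, $Y \asymp \|\by\|$, and $M \asymp \dfrak_2(\bx,\by)$. Since $\Delta(\bx,\by) \leq XY/M$, one may split the range of $M$ at $M \asymp XY/\alpha$: when $M \geq XY/\alpha$ the weight is bounded by $X^2Y^2/(\alpha^2 M^2)$, otherwise by $1$. Apply Lemma~\ref{lemma:Holdridge l(X,Y) bound} in the range $Y^\eta \leq X \leq Y$ for fixed $\eta \in (0,1)$, and \cite[Lemma 3.21]{BrLBSa} in the complementary range $X < Y^\eta$, exactly as in the proof of Lemma~\ref{lemma:E(B) upper and lower bounds}. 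After summing the dyadic contributions, the $\alpha^{-2}$ factor provides a saving of order $(\log B)^{-1}$, so $F_1(B) \ll B^6(\log\log\log B)^8/\log B$, which is much stronger than required.

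For $F_2(B)$, note that $W/\text{rad}(W) = \prod_{p \leq w} p^{\lceil \log_p w\rceil}$, so if $\Gcal \nmid W/\text{rad}(W)$ then there exists a prime $p$ with $p^{K(p)} \mid \Gcal$, where $K(p) = \lceil \log_p w\rceil + 1$ for $p \leq w$ and $K(p) = 1$ for $p > w$; in either case $d := p^{K(p)} > w$. The union bound gives
\[
\textbf{1}_{\Gcal(\bx,\by) \nmid W/\text{rad}(W)} \leq \sum_p \textbf{1}_{p^{K(p)} \mid \Gcal(\bx,\by)}.
\]
For each prime $p$ and $d = p^{K(p)}$, the condition $d \mid \Gcal(\bx,\by)$ forces $\bx$ and $\by$ to be collinear modulo $d$, so $\by$ lies in one of at most $d$ residue classes of $(\ZZ/d\ZZ)^4$; by Brun--Titchmarsh the number of admissible pairs in $\Pri(B)\times\Pri(B)$ is $\ll B^8/((\log B)^8 d^3)$. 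Combining this with the dyadic analysis of $\dfrak_2$ via Lemma~\ref{lemma:Holdridge l(X,Y) bound} under the constraint $\dfrak_2 \leq XY/d$, the contribution per prime sums to $\ll B^6 p^{-2K(p)}(\log\log\log B)^8$. Using
\[
\sum_{p\leq w} p^{-2K(p)} \ll \frac{1}{w^2}\sum_{p\leq w}\frac{1}{p^2} \ll \frac{1}{w^2}, \qquad \sum_{p > w} p^{-2K(p)} = \sum_{p > w}\frac{1}{p^2} \ll \frac{1}{w\log w},
\]
one obtains $F_2(B) \ll B^6(\log\log\log B)^8/(w\log w)$, which is bounded by $B^6/(\log\log B)^{1/2-\epsilon}$ for any $\epsilon > 0$ and $B$ sufficiently large, since $w = \log\log B/\log\log\log B$.

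The main obstacle is the treatment of $F_2(B)$: one must simultaneously handle the union bound over primes $p$, the sieve count of prime pairs $(\bx,\by)$ with $d \mid \Gcal$, and the dyadic analysis of $\dfrak_2$ via Holdridge's bound, ensuring the decay in $d$ survives both the dyadic summation over $M$-scales and the final sum over primes. The bound for $F_1(B)$ is comparatively routine once the dyadic decomposition is set up.
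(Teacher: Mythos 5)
Your treatment of $F_1(B)$ is essentially the paper's: the same dyadic decomposition in $\|\bx\|$, $\|\by\|$ and $\dfrak_2(\bx,\by)$, with Lemma~\ref{lemma:Holdridge l(X,Y) bound} in the range $Y^\eta\leq X\leq Y$ and \cite[Lemma 3.21]{BrLBSa} elsewhere. (The paper replaces $\min\{1,u^2\}$ by $u^{1/2}$ and settles for an $\alpha^{-1/2}$ saving, whereas your split of the $M$-range at $XY/\alpha$ gives $\alpha^{-1}$; both are far more than needed.)

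The $F_2(B)$ part is where there is a genuine gap. Your union bound over primes replaces the single event $\Gcal(\bx,\by)\nmid W/\mathrm{rad}(W)$ by the events $p^{K(p)}\mid\Gcal(\bx,\by)$, and you then assert that the contribution of each prime is $\ll B^6p^{-2K(p)}(\log\log\log B)^8$ by ``combining'' a Brun--Titchmarsh count of pairs with $d\mid\Gcal$ with the dyadic analysis of $\dfrak_2$ under $\dfrak_2\leq XY/d$. These are two separate counts of two separate sets, and you never explain how they are combined into a bound for $\sum 1/\dfrak_2$ over pairs satisfying \emph{both} constraints. Feeding only the constraint $\dfrak_2\leq XY/d$ into Lemma~\ref{lemma:Holdridge l(X,Y) bound} yields a saving of $d^{-1}=p^{-K(p)}$, not $p^{-2K(p)}$; and with only a $d^{-1}$ saving the union bound fails outright, because $\Gcal(\bx,\by)$ can be as large as $\|\bx\|\|\by\|\ll B^2$ and $\sum_{w<p\ll B^2}p^{-1}\asymp\log\log B$ \emph{grows}. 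One can rescue your scheme by a genuine interpolation between the two counts (taking the minimum of the two bounds on each dyadic $\Delta$-scale gives roughly $d^{-3/2}$, which does suffice), but that argument is not in your write-up, and the exponent you claim does not come out of it. More to the point, the detour is unnecessary: your own observation that $d=p^{K(p)}>w$ in every case already shows $\Gcal(\bx,\by)>w$, hence by Lemma~\ref{lemma:dfrak formel} $\dfrak_2(\bx,\by)\leq\|\bx\|\|\by\|/w$ directly, and restricting the dyadic $\Delta$-sum to $\Delta\ll XY/w$ gives the factor $1/w\ll(\log\log B)^{-1+o(1)}$ in one step --- this is exactly what the paper does, with no sum over primes and no sieve input. (A further minor issue: for prime powers $d=p^k$, $d\mid\Gcal(\bx,\by)$ forcing $\by$ into at most $d$ classes modulo $d$ requires some entry of $\bx$ to be a unit modulo $p$, which needs a word since the entries of $\bx$ are primes that could equal $p$.)
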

\begin{proof}
    We can mostly follow the proof of \cite[Lemma 4.6]{BrLBSa} but we have to use Lemma \ref{lemma:Holdridge l(X,Y) bound} in our setting. We split $F(B)$ into two parts, corresponding to the summands of $\Ecal_{\bx,\by}(B)$. Namely we let
    \begin{equation*}
         F_1(B):=\sum_{(\bx,\by)\in\Omega(B)}\frac{1}{\det(\Gamma_{\bx,\by})}\min\Big\{1,\frac{\Delta(\bx,\by)^2}{\alpha^2}\Big\},
    \end{equation*}
    and
    \begin{equation*}
        F_2(B):=\sum_{(\bx,\by)\in\Omega(B)}\frac{1}{\det(\Gamma_{\bx,\by})}\textbf{1}_{\Gcal(\bx,\by)\nmid W/\text{rad}(W)}.
    \end{equation*}
    Then we can recover $F(B)$ as $F(B)=(\log B)^8(F_1(B)+F_2(B))$. We deal with $F_1(B)$ first. Recall the definition \ref{def:dfrak} of $\dfrak_2(\bx,\by)$. By Lemma \ref{lemma:dfrak formel} we have that
    \begin{equation*}
        \Delta(\bx,\by)\leq\frac{\|\bx\|\|\by\|}{\dfrak_2(\bx,\by)}.
    \end{equation*}
    After breaking the sizes of $\|\bx\|,\|\by\|$ and $\dfrak_2(\bx,\by)$ into dyadic intervals, we obtain
    \begin{equation*}
        F_1(B)\ll\sum_{X\leq Y\ll B}\sum_{\Delta_2\ll XY}\frac{1}{\Delta_2}\Big(\min \Big\{1,\frac{(XY)^2}{\Delta_2^2\alpha^2}\Big\}\Big)l(X,Y,\Delta_2).
    \end{equation*}
    We can replace the minimum with the estimate
    \begin{equation*}
        \min\Big\{1,\frac{(XY)^2}{\Delta_2^2\alpha^2}\Big\}\leq \frac{(XY)^{1/2}}{\Delta_2^{1/2}\alpha^{1/2}}.
    \end{equation*}
    Next fix a $0<\eta<1$. For $Y^{\eta}\leq X\leq Y$  large enough, we may use Lemma \ref{lemma:Holdridge l(X,Y) bound}. In the case where $X<Y^{\eta}$ we again use \cite[Lemma 3.21]{BrLBSa}. This yields
    \begin{align*}
        F_1(B)&\ll\frac{1}{\alpha^{1/2}}\sum_{Y^{\eta}\leq X\leq Y\ll B}\frac{(XY)^3(\log\log\log Y)^8}{(\log X)^4(\log Y)^4}+\frac{1}{\alpha^{1/2}}\sum_{X<Y^{\eta}\leq Y\ll B}(XY)^3\\
        &\ll \frac{1}{\alpha^{1/2}}\frac{B^6}{(\log B)^8}(\log\log\log B)^8+B^{3(1+\eta)}.
    \end{align*}
    Next we deal with $F_2(B)$. For a prime $p$ and an integer $m$, let $v_p(m)$ be the p-adic valuation of $m$. If $m$ does not divide $W/\text{rad}(W)$, then there is either a prime $p>w$ dividing $m$ or there is a prime $p\leq w$ such that
    \begin{equation*}
        v_p(m)>v_p\Big(\frac{W}{\text{rad}(W)}\Big)=\Big\lceil\frac{\log w}{\log p}\Big\rceil\geq\frac{\log w}{\log p}.
    \end{equation*}
    Hence, in either case it is $m>w$. Therefore, if $\bx,\by$ are linearly independent and $\Gcal(\bx,\by)$ does not divide $W/\text{rad}(W)$, we must have that $\Gcal(\bx,\by)>w$ and thus by Lemma \ref{lemma:dfrak formel} we find that
    \begin{equation*}
        \dfrak_2(\bx,\by)\leq\frac{\|\bx\|\|\by\|}{w}.
    \end{equation*}
    Again splitting the sum in $F_2(B)$ into dyadic pieces, we obtain
    \begin{equation*}
        F_2(B)\ll\sum_{X\leq Y\ll B}\sum_{\Delta_2\ll XY/w}\frac{1}{\Delta_2}l(X,Y,\Delta_2).
    \end{equation*}
    We deal with this in the same way as we did with $F_1(B)$. Again fix a $0<\eta<1$. Then by Lemma \ref{lemma:Holdridge l(X,Y) bound} and \cite[Lemma 3.21]{BrLBSa}, we have that
    \begin{align*}
        F_2(B)&\ll \frac{1}{w}\sum_{Y^{\eta}\leq X\leq Y\ll B}\frac{(XY)^3(\log\log\log Y)^8}{(\log X)^4(\log Y)^4}+\frac{1}{w}\sum_{X<Y^{\eta}\leq Y\ll B}(XY)^3\\
        &\ll\frac{1}{w}\frac{B^6}{(\log B)^8}(\log\log\log B)^8+B^{3(1+\eta)}.
    \end{align*}
    Recalling the definition (\ref{def:w}) of $w$ once again, we see that the saving is large enough.
\end{proof}

\subsection{The second moment bounds}\label{subsection:second moment bounds}$~$
In this section we prove asymptotic formulas for the second moments of $N_{\bfa}(B)$, $N_{\bfa}^{\loc}(B)$ and a mixed term. We follow the \cite[Section 4.5]{BrLBSa} and introduce some similar notation. Recall the definition (\ref{defn:N_a(B)}) of $N_{\bfa}(B)$. We let 
\begin{equation}\label{defn:D(A,B)}
    D(A,B)=\sum_{\substack{\bfa\in\ZZ^4_{\prim}\\\|\bfa\|\leq A}}N_{\bfa}(B)^2-(\log B)^4\sum_{\substack{\bfa\in\ZZ^4_{\prim}\\\|\bfa\|\leq A}}N_{\bfa}(B).
\end{equation}
This corresponds to the second moment of $N_{\bfa}(B)$ with its diagonal removed. That is, we interpret the counting function $N_{\bfa}(B)^2$ as counting tuples $(\bx,\by)$ in $\Omega(B)$ satisfying $\langle\bx,\bfa\rangle=0$ and $\langle\by,\bfa\rangle=0$. Now recall the definition (\ref{defn:N^loc}) of $N_{\bfa}^{\loc}(B)$. To define the second moment for that counting function and a mixed moment containing both $N_{\bfa}(B)$ and $N_{\bfa}^{\loc}(B)$, we first need to define the terms removing their diagonal contributions. For this, recall the definitions (\ref{defn:lambda_c(Q) lattice}) and (\ref{defn:C^gamma}) of $\Lambda_{\bx}^{(Q)}$ and $\Ccal_{\bx}^{(\gamma)}$. Let
\begin{equation}\label{def:Delta^mix and ^loc}
    \Delta^{\mix}_{\bfa}(B)=(\log B)^8\frac{\alpha W}{\|\bfa\|}\sum_{\substack{\bx\in\Pri(B)\\ \bfa\in\Lambda_{\bx}}}\frac{1}{\|\bx\|},\quad\text{and}\quad \Delta^{\loc}_{\bfa}(B)=(\log B)^8\frac{\alpha^2W^2}{\|\bfa\|^2}\sum_{\substack{\bx\in\Pri(B)\\\bfa\in\Lambda_{\bx}^{(W)}\cap\Ccal_{\bx}^{(\alpha)}}}\frac{1}{\|\bx\|^2}.
\end{equation}
We can now define the mixed second moment as
\begin{equation}\label{defn:D^mix}
    D^{\mix}(A,B)=\sum_{\substack{\bfa\in\ZZ^4_{\prim}\\\|\bfa\|\leq A}}N_{\bfa}(B)N_{\bfa}^{\loc}(B)-\sum_{\substack{\bfa\in\ZZ^4_{\prim}\\\|\bfa\|\leq A}} \Delta^{\mix}_{\bfa}(B),
\end{equation}
and the local second moment as
\begin{equation}\label{defn:D^loc}
    D^{\loc}(A,B)=\sum_{\substack{\bfa\in\ZZ^4_{\prim}\\\|\bfa\|\leq A}}N_{\bfa}^{\loc}(B)^2-\sum_{\substack{\bfa\in\ZZ^4_{\prim}\\\|\bfa\|\leq A}}\Delta^{\loc}_{\bfa}(B).
\end{equation}
As is done in \cite[(4.24), (4,25)]{BrLBSa}, for a lattice $\Lambda\subset\ZZ^4$, a bounded region $\Rcal\subset\RR^4$ and any integer $k\geq 0$ we let
\begin{equation}\label{defn:Scal_k and ^star}
    \Scal_k(\Lambda;\Rcal)=\sum_{\bfa\in(\Lambda\setminus\{0\})\cap\Rcal}\frac{1}{\|\bfa\|^k},\quad\text{and}\quad \Scal_k^*(\Lambda;\Rcal)= \sum_{\bfa\in\Lambda\cap\ZZ^4_{\prim}\cap\Rcal}\frac{1}{\|\bfa\|^k}.
\end{equation}
We are now set up to prove the main lemmas of this section. Recall the definition (\ref{defn:E(B)}) of $E(B)$.
\begin{lemma}\label{lemma:D(A,B) asymptotic formula}
    Let $C>0$. For $A\geq B^2(\log B)^{C+1}$ we have
    \begin{equation*}
        D(A,B)=\frac{\pi}{\zeta(2)}A^2E(B)\Bigg(1+O\Big(\frac{1}{(\log B)^C}\Big)\Bigg).
    \end{equation*}
\end{lemma}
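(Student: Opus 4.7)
The plan is to expand $N_{\bfa}(B)^2$ and observe that the subtracted term in (\ref{defn:D(A,B)}) exactly removes the diagonal contribution $\bx=\by$. Concretely, writing
$$N_{\bfa}(B)^2 = (\log B)^8\sum_{\substack{\bx,\by\in\Pri(B)\\ \bfa\in\Lambda_{\bx}\cap\Lambda_{\by}}}1$$
and splitting off the diagonal, then swapping the order of summation, gives
$$D(A,B)=(\log B)^8\sum_{(\bx,\by)\in\Omega(B)}\#\big(\Lambda_{\bx}\cap\Lambda_{\by}\cap\ZZ^4_{\prim}\cap\Ball_4(A)\big).$$

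Next I would apply the primitive lattice point count (Lemma \ref{lemma:primitive lattice points}) to the inner count with $N=4$, $R=2$, $T=A$. Since each pair $(\bx,\by)\in\Omega(B)$ consists of distinct vectors with prime coordinates, $\bx$ and $\by$ are automatically linearly independent, so Lemma \ref{lemma:determinant of Lambda} guarantees that $\Lambda_{\bx}\cap\Lambda_{\by}$ is a primitive integral lattice of rank $2$. Corollary \ref{corollary:Gamma_xy det bound} gives $\det(\Lambda_{\bx}\cap\Lambda_{\by})\leq\|\bx\|\|\by\|\leq B^2$, and Lemma \ref{lemma:succ minima upper bound} then yields $\lambda_2(\Lambda_{\bx}\cap\Lambda_{\by})\ll B^2$, so we may take $Y\asymp B^2$; the required hypothesis $T\geq Y$ holds by the assumption $A\geq B^2(\log B)^{C+1}$. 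The main term produced by Lemma \ref{lemma:primitive lattice points} is $\pi A^2/(\zeta(2)\det(\Lambda_{\bx}\cap\Lambda_{\by}))$ (using $V_2=\pi$), and summing over $\Omega(B)$ against the $(\log B)^8$ prefactor recovers precisely $\tfrac{\pi}{\zeta(2)}A^2 E(B)$ by the definition (\ref{defn:E(B)}) of $E(B)$.

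The error per pair is $O\big(A\log A\cdot\lambda_2/\det\big)+O(A)\ll A\log A$, uniformly in the lattice. Combined with the crude bound $|\Omega(B)|\ll B^8/(\log B)^8$ (from the prime number theorem), the total error is $O\big(B^8 A\log A\big)$. The main obstacle, and the only place the hypothesis $A\geq B^2(\log B)^{C+1}$ is used sharply, is converting this additive error into the claimed relative error $O((\log B)^{-C})$. For this I would invoke the lower bound $E(B)\gg B^6$ from Lemma \ref{lemma:E(B) upper and lower bounds}, which makes the main term $\gg A^2 B^6$. The ratio error/main is then $\ll B^2\log A/A$, a decreasing function of $A$ for $A\geq e$; at the critical endpoint $A=B^2(\log B)^{C+1}$ it evaluates to $(2\log B+O(\log\log B))/(\log B)^{C+1}=O((\log B)^{-C})$, and hence this bound holds throughout the admissible range of $A$. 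This yields the claimed asymptotic.
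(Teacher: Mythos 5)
Your proof is correct and follows essentially the same route as the paper: expand the square, strip the diagonal contribution, rewrite $D(A,B)$ as a sum of $\Scal_0^*(\Lambda_{\bx}\cap\Lambda_{\by};\Ball_4(A))$ over $\Omega(B)$, and apply Lemma \ref{lemma:primitive lattice points} with $Y\ll B^2$ supplied by Corollary \ref{corollary:Gamma_xy det bound} and Lemma \ref{lemma:succ minima upper bound}. The only difference is in the error bookkeeping: the paper keeps the error relative to each $1/\det(\Lambda_{\bx}\cap\Lambda_{\by})$ (absorbing the additive $O(A\log B)$ term using $\det\leq B^2$) so that summation reproduces $E(B)$ times a uniform relative error $O(B^2\log B/A)$, whereas you bound the error per pair uniformly by $A\log A$ and then compare the total against the main term via the lower bound $E(B)\gg B^6$ of Lemma \ref{lemma:E(B) upper and lower bounds}; both arguments are valid and yield the claimed $O((\log B)^{-C})$.
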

\begin{proof}
    For a tuple $(\bx,\by)$ in $\Omega(B)$, we let
    \begin{equation*}
        \Gamma_{\bx,\by}=\Lambda_{\bx}\cap\Lambda_{\by}.
    \end{equation*}
    Since the diagonal contribution got removed in the definition (\ref{defn:D(A,B)}) of $D(A,B)$, we can write
    \begin{align*}
        D(A,B)&=(\log B)^8\sum_{\substack{\bx,\by\in\Pri(B)\\\bx\neq\pm\by}} \#\{\bfa\in\ZZ^4_{\prim}:\|\bfa\|\leq A,\,\langle\bfa,\bx\rangle=\langle\bfa,\bx\rangle=0\}\\
        &=(\log B)^8\sum_{(\bx,\by)\in\Omega(B)}\Scal_0^*(\Gamma_{\bx,\by};\Ball_4(A)).
    \end{align*}
    We can deal with the inner counting function by applying Lemma \ref{lemma:primitive lattice points}. For this to succeed, we check for the size of the successive minima of $\Gamma_{\bx,\by}$. Since the tuple $(\bx,\by)$ is from $\Omega(B)$, the lattice in question is primitive of rank $2$ and by Corollary \ref{corollary:Gamma_xy det bound} its determinant is bounded by $B^2$. Thus, by Lemma \ref{lemma:succ minima upper bound}, there exists a $c>0$ such that all of its successive minima are bounded by $cB^2$. In our case, if $C>0$ and $B$ is large enough, then $A\geq B^2(\log B)^{C+1}$ is large enough to apply the lemma. Hence we get that
    \begin{equation*}
        \Scal_0^*(\Gamma_{\bx,\by};\Ball_4(A)))=\frac{\pi}{\zeta(2)}\frac{A^2}{\det(\Gamma_{\bx,\by})}\Big(1+O\Big(\frac{B^2\log B}{A}\Big)\Big)+ O\big(A\log B\big).
    \end{equation*}
    Again, since $\det(\Gamma_{\bx,\by})\leq B^2$, we also have that
    \begin{equation*}
        A\log B\leq\frac{A^2}{\det(\Gamma_{\bx,\by})}\frac{B^2\log B}{A},
    \end{equation*}
    so that we can ignore the last error term. Putting this back into the sum over $(\bx,\by)\in\Omega(B)$ completes the proof.
\end{proof}
We now move on to the mixed term.

\begin{lemma}\label{lemma:D^mix asymptotic formula}
    Let $A\geq B^2(\log B)^2$. For all $\epsilon>0$ we have that
    \begin{equation*}
        D^{\mix}(A,B)=\frac{\pi}{\zeta(2)}A^2E(B)\Big(1+O\Big(\frac{1}{(\log\log B)^{1/2-\epsilon}}\Big).
    \end{equation*}
\end{lemma}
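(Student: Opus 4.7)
The plan is to mirror the proof of Lemma \ref{lemma:D(A,B) asymptotic formula}, using a rank-$3$ lattice point count in place of the rank-$2$ one; the extra rank is compensated by the slice region $\Ccal_{\by}^{(\alpha)}$ (whose volume inside $\Ball_4(1)\cap(\RR\bx)^{\perp}$ is of order $1/\alpha$) and by the index factor $\sim W/\gcd(\Gcal(\bx,\by),W)$ from the congruence $\Lambda_{\by}^{(W)}$, and these cancellations are exactly what the prefactor $\alpha W$ in $N_{\bfa}^{\loc}(B)$ balances. By construction, expanding $N_{\bfa}(B)N_{\bfa}^{\loc}(B)$ and subtracting $\Delta^{\mix}_{\bfa}(B)$ from (\ref{def:Delta^mix and ^loc}) removes the diagonal contribution $\bx=\by$ (since $\langle\bfa,\bx\rangle=0$ automatically forces $\bfa\in\Lambda_{\bx}^{(W)}\cap\Ccal_{\bx}^{(\alpha)}$). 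After swapping the order of summation,
\begin{equation*}
D^{\mix}(A,B)=(\log B)^8\alpha W\sum_{(\bx,\by)\in\Omega(B)}\frac{1}{\|\by\|}\Scal_1^{*}\bigl(\Lambda_{\bx}\cap\Lambda_{\by}^{(W)};\,\Ball_4(A)\cap\Ccal_{\by}^{(\alpha)}\bigr).
\end{equation*}

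I would first strip the primitivity restriction by a Möbius inversion analogous to the opening of the proof of Lemma \ref{lemma:primitive lattice points}, combined with partial summation to dispose of the weight $\|\bfa\|^{-1}$. The resulting unweighted counts are handled by Lemma \ref{Lemma:General point count} applied to the rank-$3$ lattice $\Lambda_{\bx}\cap\Lambda_{\by}^{(W)}$, whose determinant equals $\|\bx\|W/\gcd(\Gcal(\bx,\by),W)$ by Lemma \ref{lemma:determinants of Lambda intersections} and whose successive minima are $\ll\|\bx\|W\ll B(\log B)^3$ by Lemma \ref{lemma:succ minima upper bound} together with (\ref{W_bound}); since $A\geq B^2(\log B)^2$, the hypotheses of Lemma \ref{Lemma:General point count} are comfortably satisfied. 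The associated volume
\begin{equation*}
\Vcal_{\by}\bigl(\Lambda_{\bx}\cap\Lambda_{\by}^{(W)};\,\alpha\bigr)=\Ical(\bx,\bz),\qquad \bz:=\tfrac{2\alpha}{\|\by\|}\by,
\end{equation*}
satisfies $\delta_{\bx,\bz}^{1/2}=2\alpha\delta_{\bx,\by}^{1/2}/\|\by\|$, and Lemma \ref{lemma:Ical volume formula} evaluates it to $\tfrac{2V_2}{3}\|\bx\|\|\by\|/(\alpha\det(\ZZ\bx\oplus\ZZ\by))$, up to a multiplicative error of $O(\min\{1,\Delta(\bx,\by)^2/\alpha^2\})$.

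Combining these ingredients and using Lemma \ref{lemma:determinant of Lambda} to identify $\det(\Lambda_{\bx}\cap\Lambda_{\by})=\det(\ZZ\bx\oplus\ZZ\by)/\Gcal(\bx,\by)$, the main contribution of a pair $(\bx,\by)$ telescopes to $\tfrac{V_2}{\zeta(2)}A^2/\det(\Lambda_{\bx}\cap\Lambda_{\by})$ whenever $\Gcal(\bx,\by)\mid W/\text{rad}(W)$, at which point the local factors at primes $p\mid W$ from the Möbius sum combine cleanly. Summing over $(\bx,\by)\in\Omega(B)$ and invoking $V_2=\pi$ together with (\ref{defn:E(B)}) recovers the main term $\tfrac{\pi}{\zeta(2)}A^2E(B)$. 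The residual error splits into: (i) the multiplicative error from Lemma \ref{lemma:Ical volume formula}, matching the first summand of $\Ecal_{\bx,\by}(B)$ in (\ref{def:Ecal_x,y}); (ii) pairs with $\Gcal(\bx,\by)\nmid W/\text{rad}(W)$, matching the second summand; and (iii) the standard $O(Y/A)$ error from Lemma \ref{Lemma:General point count}, which is $\ll(\log B)^{-1}$ under the hypothesis. The first two contribute $\ll A^2F(B)$ after summation, so Lemma \ref{lemma:F(B) saving} combined with the lower bound $E(B)\gg B^6$ from Lemma \ref{lemma:E(B) upper and lower bounds} delivers the claimed relative error $O((\log\log B)^{-1/2+\epsilon})$.

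The main obstacle is executing the Möbius inversion cleanly: $\Lambda_{\bx}\cap\Lambda_{\by}^{(W)}$ is not in general primitive in $\ZZ^4$, and a Möbius parameter $l$ interacts with the modulus $W$ via $\Lambda_{\bx}\cap\Lambda_{\by}^{(W)}\cap l\ZZ^4=l\bigl(\Lambda_{\bx}\cap\Lambda_{\by}^{(W/\gcd(l,W))}\bigr)$, so one must recombine the local factors at each prime $p\mid W$ to recover the clean constant $V_2/\zeta(2)$. A secondary difficulty is keeping the error term from Lemma \ref{Lemma:General point count} under control after the Möbius sum: as in the proof of Lemma \ref{lemma:primitive lattice points}, one splits the range of $l$ and uses Lemma \ref{Lemma:Degenerate point count} for large $l$, and this is where the hypothesis $A\geq B^2(\log B)^2$ is essentially sharp in view of $\det(\Lambda_{\bx}\cap\Lambda_{\by}^{(W)})\asymp\|\bx\|W\asymp B(\log B)^3$.
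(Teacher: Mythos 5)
Your proposal is correct and follows essentially the same route as the paper: the same off-diagonal reduction to $\Scal_1^*(\Lambda_{\bx}\cap\Lambda_{\by}^{(W)};\Ball_4(A)\cap\Ccal_{\by}^{(\alpha)})$, Möbius inversion with the truncation/partial-summation step to remove the weight, Lemma \ref{Lemma:General point count} combined with the $\Ical$ volume formula and the determinant identity from Lemma \ref{lemma:determinants of Lambda intersections}, and the error bookkeeping via $\Ecal_{\bx,\by}(B)$, $F(B)$ and Lemma \ref{lemma:F(B) saving}. (Your side remark that $A\geq B^2(\log B)^2$ is forced by $\det(\Lambda_{\bx}\cap\Lambda_{\by}^{(W)})\asymp B(\log B)^3$ slightly misattributes the constraint — it really comes from keeping the accumulated point-count errors below $A^2E(B)\asymp A^2B^6$ after summing over $\Omega(B)$ — but this does not affect the argument.)
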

\begin{proof}
    For $(\bx,\by)\in\Omega(B)$, recall the definitions (\ref{defn:lambda_c(Q) lattice}) and (\ref{defn:C^gamma}) of $\Lambda_{\bx}^{(W)}$ and the region $\Ccal_{\by}^{(\alpha)}$. We let
    \begin{equation*}
        \Gamma_{\bx,\by}^{\mix}(W)=\Lambda_{\bx}\cap\Lambda_{\by}^{(W)},
    \end{equation*}
    and
    \begin{equation*}
        \Tcal^{\mix}_{\by}(A,\alpha)=\Ball_4(A)\cap\Ccal_{\by}^{(\alpha)}.
    \end{equation*}
    In the same way as in the lemma before, we can rewrite $D^{\mix}(A,B)$ into
    \begin{equation*}
        D^{\mix}(A,B)=(\log B)^8\alpha W\sum_{(\bx,\by)\in\Omega(B)}\frac{\Scal_1^*\big(\Gamma_{\bx,\by}^{\mix}(W);\Tcal_{\by}^{\mix}(A,\alpha)\big)}{\|\bx\|}.
    \end{equation*}
    Following the proof of \cite[Lemma 4.10]{BrLBSa}, we handle $\Scal_1^*\big(\Gamma_{\bx,\by}^{\mix}(W);\Tcal_{\by}^{\mix}(A,\alpha)\big)$ by applying a Möbius inversion to get rid of the coprimality conditions in the counting function. After that we can remove the weight $\|\bx\|^{-1}$ via summation by parts to then use Lemma \ref{Lemma:General point count} and obtain an asymptotic formula for $D^{\mix}(A,B)$. By Möbius inversion we have
    \begin{equation*}
        \mathcal{S}_1^*\big(\Gamma^{\mix}_{\bx,\by}(W),\mathcal{T}_{\by}^{\mix}(A,\alpha)\big)=\sum_{l\leq A}\frac{\mu(l)}{l}\mathcal{S}_1\Big(\Gamma^{\mix}_{\bx,\by}\Big(\frac{W}{\gcd(l,W)}\Big),\mathcal{T}_{\by}^{\mix}(\frac{A}{l},\alpha)\Big).
    \end{equation*}
    Note that by Lemma \ref{lemma:determinants of Lambda intersections}, it is
    \begin{equation}\label{equation:Determinant of Gamma^mix}
        \det\Big(\Gamma_{\bx,\by}^{\mix}\Big(\frac{W}{\gcd(W,l)}\Big)\Big)=\|\bx\|\frac{W}{\gcd(W,l)\gcd(\Gcal(\bx,\by),\frac{W}{\gcd(W,l)})}\leq \|\bx\|W.
    \end{equation}
    Like in the proof of Lemma \ref{lemma:primitive lattice points}, we split the sum over $l$ into 2 parts according to the size of $A/l$. First, for $u\geq 1$ real it is
    \begin{equation}\label{equation:S_1 bound against larger lattice}
        \mathcal{S}_1\Big(\Gamma^{\mix}_{\bx,\by}\Big(\frac{W}{\gcd(l,W)}\Big),\mathcal{T}_{\by}^{\mix}(u,\alpha)\Big)\leq \mathcal{S}_1\big(\Lambda_{\bx},\Ball_4(u)\big)\ll \sum_{U\ll u}\frac{1}{U}\mathcal{S}_0\big(\Lambda_{\bx},\Ball_4(u)\big).
    \end{equation}
    Let $\lambda(\bx)$ be the largest successive minimum of $\Lambda_{\bx}$. When $U\geq\lambda(\bx)$, we can apply Lemma \ref{Lemma:General point count}. Else it is $U<\lambda(\bx)$. In that case we may use Lemma \ref{Lemma:Degenerate point count} with $M=1/2$ and $R_0=2$. Recall from Lemma \ref{lemma:determinant of Lambda} that we have $\det(\Lambda_{\bx})=\|\bx\|$, so that we obtain
    \begin{equation*}
        \Scal_0\big(\Lambda_{\bx},\Ball_4(U)\big)\ll \frac{U^3}{\|\bx\|}+\frac{U\lambda(\bx)^2}{\|\bx\|}+1.
    \end{equation*}
    Now using that $\lambda(\bx)\ll\det(\Lambda_{\bx})=\|\bx\|$, this shows that for any $u\geq 1$ we have
    \begin{equation}\label{equation:S_1 bound im mixed lemma}
        \mathcal{S}_1\Big(\Gamma^{\mix}_{\bx,\by}\Big(\frac{W}{\gcd(l,W)}\Big),\mathcal{T}_{\by}^{\mix}(u,\alpha)\Big)\ll
        \frac{u^2}{\|\bx\|}+\|\bx\|\log u+1.
    \end{equation}
Let $c>0$ be the constant given in Lemma \ref{lemma:succ minima upper bound} such that the successive minima of $\Gamma^{\mix}_{\bx,\by}(W/\gcd(W,l))$ are bounded by $cW\|\bx\|$. For the range where $A/cW\|\bx\|<l\leq A$, we get that
    \begin{align*}
        \sum_{A/cW\|\bx\|<l\leq A}\frac{\mu(l)}{l}\mathcal{S}_1\Big(\Gamma^{\mix}_{\bx,\by}\Big(\frac{W}{\gcd(l,W)}\Big),\mathcal{T}_{\by}^{\mix}(u,\alpha)\Big) &\ll
        \sum_{A/cW\|\bx\|<l\leq A}\frac{1}{l}\Big(\frac{A^2}{l^2\|\bx\|}+\|\bx\|\log A+1\Big)\\
        &\ll \sum_{A/cW\|\bx\|<l\leq A}\frac{1}{l}\big(W^2\|\bx\|+\|\bx\|\log A+1\big)\\
        &\ll (W^2\|\bx\|+1)(\log A)^2
    \end{align*}
    For the remaining $l\leq A$ it is $A/l\geq cW\|\bx\|$ and we wish to apply Lemma \ref{Lemma:General point count}. To do this, let
    \begin{equation*}
        S_{\bx,\by}^{\mix}(A,B,l)=\Scal_1\Big(\Gamma_{\bx,\by}^{\mix}\Big(\frac{W}{\gcd(W,l)}\Big);\Tcal_{\by}^{\mix}\Big(\frac{A}{l},\alpha\Big)\setminus\Tcal_{\by}^{\mix}\big(cW\|\bx\|,\alpha\big)\Big).
    \end{equation*}
    With the region $\Tcal_{\by}^{\mix}\big(cW\|\bx\|,\alpha\big)$ removed from the count, we can apply partial summation to get rid of the weight and apply the lemma. Also by another application of (\ref{equation:S_1 bound im mixed lemma}), we have that
    \begin{equation}\label{equation:S_1^* formula for mixed}
        \mathcal{S}_1^*(\Gamma^{\mix}_{\bx,\by}(W),\mathcal{T}_{\by}^{\mix}(A,\alpha))=\sum_{l\leq A/cW\|\bx\|}\frac{\mu(l)}{l}S_{\bx,\by}^{\mix}(A,B,l) + O\Big(\Big(W\|\bx\|+1\Big)(\log A)^2\Big).
    \end{equation}
    Now we carry out the partial summation. It is
    \begin{align}\label{equation:S^mix_x,y formula}
        S_{\bx,\by}^{\mix}(A,B,l)=&\frac{l}{A}\Scal_0\Big(\Gamma_{\bx,\by}^{\mix}\Big(\frac{W}{\gcd(W,l)}\Big);\Tcal^{\mix}_{\by}\Big(\frac{A}{l},\alpha\Big)\Big) \nonumber\\
        &+ \int_{cW\|\bx\|}^{A/l}\Scal_0\Big(\Gamma_{\bx,\by}^{\mix}\Big(\frac{W}{\gcd(W,l)}\Big);\Tcal_{\by}^{\mix}(t,\alpha)\Big)\frac{\text{d}t}{t^2}
        + O\big(W\|\bx\|\big).
    \end{align}
    Recall that by our choice of $c$, all successive minima of $\Gamma_{\bx,\by}^{\mix}(W/\gcd(W,l))$ are bounded by $cW\|\bx\|$. Hence we can now apply Lemma \ref{Lemma:General point count} with $I=1$ and $\gamma=\alpha$. For this, let
    \begin{equation*}
        \mathfrak{I}_{\bx,\by}(\alpha)=\text{Span}_{\RR}(\Lambda_{\bx})\cap\Tcal_{\by}^{\mix}(1,\alpha),
    \end{equation*}
    so that we have
    \begin{align*}
        \mathcal{S}_0\Big(\Gamma^{\mix}_{\bx,\by}\Big(\frac{W}{\gcd(W,l)},\mathcal{T}^{\mix}_{\by}(t,\alpha)\Big)&= t^3\Big(\det\Big(\Gamma^{\mix}_{\bx,\by}\Big(\frac{W}{\gcd(W,l)}\Big)\Big)^{-1}\cdot\Big(\vol(\mathfrak{I}_{\bx,\by}   (\alpha))+O\Big(\frac{W\|\bx\|}{t}\Big)\Big).
    \end{align*}
    The determinant in the display above is given in (\ref{equation:Determinant of Gamma^mix}). Note that the sum in (\ref{equation:S_1^* formula for mixed}) is only over squarefree $l$, hence we can write
    \begin{equation*}
        \gcd\Big(\Gcal(\bx,\by),\frac{W}{\gcd(W,l)}\Big)=\Gcal(\bx,\by)\big(1+O(\textbf{1}_{\Gcal(\bx,\by)\nmid W/\text{rad}(W)})\big).
    \end{equation*}
    To deal with the volume, recall Definition (\ref{def:Ical volume}). Noting that
    \begin{equation*}
        \mathfrak{I}_{\bx,\by}(\alpha)=\Ical\Big(2\alpha\frac{\bx}{\|\bx\|},2\alpha\frac{\by}{\|\by\|}\Big),
    \end{equation*}
    we may apply Lemma \ref{lemma:Ical volume formula}. Recalling the definition (\ref{def:Delta(x,y)}) of $\Delta(\bx,\by)$, the lemma yields that
    \begin{equation*}
        \vol(\mathfrak{I}_{\bx,\by}(\alpha))=\frac{2}{3}V_2\frac{\Delta(\bx,\by)}{\alpha}\Big(1+O\Big(\min\Big\{1,\frac{\Delta(\bx,\by)^2}{\alpha^2}\Big\}\Big)\Big).
    \end{equation*}
    Now we are in position to carry out the integration in (\ref{equation:S^mix_x,y formula}). Recall the definition (\ref{def:Ecal_x,y}) of $\Ecal_{\bx,\by}(B)$. We get that
    \begin{equation*}
        S^{\mix}_{\bx,\by}(A,B,l)=V_2\frac{A^2}{\alpha W}\frac{\gcd(l,W)}{l^2}\frac{\Delta(\bx,\by)\Gcal(\bx,\by)}{\|\bx\|}\Big(1+O\Big(\Ecal_{\bx,\by}(B)+\frac{\alpha W\|\bx\|l}{A}\Big)\Big)
        +O\big(W\|\bx\|\big).
    \end{equation*} 
    We put this back into (\ref{equation:S_1^* formula for mixed}) and note that
    \begin{equation*}
        \frac{\Delta(\bx,\by)\Gcal(\bx,\by)}{\|\bx\|\|\by\|}=\frac{1}{\det(\Lambda_{\bx}\cap\Lambda_{\by})}.
    \end{equation*}
    To simplify further, consider
    \begin{equation*}
        \sum_{l\leq A/cW\|\bx\|}\mu(l)\frac{\gcd(W,l)}{l^3}=\sum_{l\leq w}\mu(l)\frac{\gcd(W,l)}{l^3}+O\Big(\frac{W\|\bx\|}{A}+\frac{1}{w}\Big).
    \end{equation*}
    For all $l\leq w$ we have that $\gcd(W,l)=l$, so that we can replace the sum on the right hand side above by
    \begin{equation*}
        \frac{1}{\zeta(2)}+O\Big(\frac{1}{w}\Big).
    \end{equation*}
    After recalling the definition (\ref{def:F(B)}) of $F(B)$ and using that $\alpha\ll\log B$ we obtain
    \begin{equation*}
        D^{\mix}(A,B)=\frac{\pi}{\zeta(2)}A^2E(B)\Big(1+O\Big(\frac{1}{w}+\frac{F(B)}{B^6}\Big)\Big)+O\Big((\log B)^9A(\log A)W\sum_{(\bx,\by)\in\Omega(B)}\frac{\|\bx\|}{\det(\Lambda_{\bx}\cap\Lambda_{\by})}\Big).
    \end{equation*}
    We can use the same argument as in the proof of Lemma \ref{lemma:E(B) upper and lower bounds} to see that the sum in the last error term is bounded by $B^7(\log B)^{-7}$. By (\ref{W_bound}), it is $W\ll (\log B)^3$ and since $B^2<A$, we get that the last error term in total is bounded by $A^2B^{5+\epsilon}$ for $\epsilon>0$. Now (\ref{def:w}) and Lemma \ref{lemma:F(B) saving} show that the remaining error terms are small enough to obtain the lemma.
\end{proof}

Next we deal with the local term.
\begin{lemma}\label{lemma:D^loc asymptotic formula}
    Let $B^2\leq A$. For all $\epsilon>0$ we have
    \begin{equation*}
        D^{\loc}(A,B)=\frac{\pi}{\zeta(2)}A^2E(B)\Big(1+O\Big(\frac{1}{(\log\log B)^{1/2-\epsilon}}\Big)\Big).
    \end{equation*}
\end{lemma}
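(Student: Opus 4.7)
The strategy is to mirror the proof of Lemma \ref{lemma:D^mix asymptotic formula} essentially step by step, with two substantive modifications: the auxiliary lattice is now the full-rank intersection $\Gamma^{\loc}_{\bx,\by}(W') := \Lambda_{\bx}^{(W')}\cap\Lambda_{\by}^{(W')}$, and the constraint region carries two convex cuts, $\Tcal^{\loc}_{\bx,\by}(T,\alpha) := \Ball_4(T)\cap\Ccal_{\bx}^{(\alpha)}\cap\Ccal_{\by}^{(\alpha)}$. Since $\Delta^{\loc}_{\bfa}(B)$ exactly subtracts the diagonal $\bx=\by$, swapping the order of summation yields
\begin{equation*}
    D^{\loc}(A,B) = (\log B)^8\,\alpha^2 W^2 \sum_{(\bx,\by)\in\Omega(B)} \frac{\Scal_2^*\bigl(\Gamma^{\loc}_{\bx,\by}(W);\,\Tcal^{\loc}_{\bx,\by}(A,\alpha)\bigr)}{\|\bx\|\,\|\by\|}.
\end{equation*}
A Möbius inversion in $\bfa$ then converts $\Scal_2^*$ into a sum over squarefree $l\leq A$ of $\mu(l)\,l^{-2}\,\Scal_2\bigl(\Gamma^{\loc}_{\bx,\by}(W/\gcd(l,W));\,\Tcal^{\loc}_{\bx,\by}(A/l,\alpha)\bigr)$, using that $l\bfb\in\Lambda_{\bv}^{(W)}$ if and only if $\bfb\in\Lambda_{\bv}^{(W/\gcd(l,W))}$.

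By Lemma \ref{lemma:determinants of Lambda intersections}, $\det(\Gamma^{\loc}_{\bx,\by}(W')) = (W')^2/\gcd(\Gcal(\bx,\by),W')\leq W^2$, so by Lemma \ref{lemma:succ minima upper bound} every successive minimum of $\Gamma^{\loc}_{\bx,\by}(W')$ is $\ll W^2$. In the main range $l \leq A/(cW^2)$, I would apply partial summation on the $\|\bfa\|^{-2}$ weight followed by Lemma \ref{Lemma:General point count} with $R = 4$ and $I = 2$ convex constraints. The volume factor is $\vol(\Ball_4(1)\cap\Ccal_{\bx}^{(\alpha)}\cap\Ccal_{\by}^{(\alpha)}) = \Jcal(2\alpha\bx/\|\bx\|,\,2\alpha\by/\|\by\|)$, which Lemma \ref{lemma:Jcal volume formula} evaluates to $\frac{1}{2} V_2\,\Delta(\bx,\by)/\alpha^2 \bigl(1+O(\min\{1,\Delta(\bx,\by)^2/\alpha^2\})\bigr)$. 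The tail range $l > A/(cW^2)$ is absorbed by a dyadic application of Lemma \ref{Lemma:Degenerate point count} in $\|\bfa\|$, paralleling the derivation of (\ref{equation:S_1 bound im mixed lemma}) in the mixed case.

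The arithmetic sum $\sum_l \mu(l)\,\gcd(l,W)^2\,\gcd(\Gcal(\bx,\by),W/\gcd(l,W))/l^4$ reduces, after restricting to squarefree $l\leq w$ (for which $\gcd(l,W)=l$) and replacing the inner gcd by $\Gcal(\bx,\by)$ at the cost of the indicator $\mathbf{1}_{\Gcal(\bx,\by)\nmid W/\text{rad}(W)}$, to $\Gcal(\bx,\by)/\zeta(2) + O(\Gcal(\bx,\by)/w)$. Combining with the identity $\Delta(\bx,\by)\,\Gcal(\bx,\by)/(\|\bx\|\,\|\by\|) = 1/\det(\Lambda_{\bx}\cap\Lambda_{\by})$ obtained from Lemma \ref{lemma:dfrak formel} together with (\ref{Equation:dfrak=det(Gamma)}), and noting $V_2 = \pi$, the main term reassembles after summing over $(\bx,\by)\in\Omega(B)$ into $\frac{\pi}{\zeta(2)} A^2\, E(B)$, via the definition (\ref{defn:E(B)}) of $E(B)$. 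The accumulated errors — coming from the $\Jcal$ formula, the Möbius truncation, and the indicator — are of shape $A^2\,F(B) + A^2\,E(B)/w$, which Lemma \ref{lemma:F(B) saving} and the definition (\ref{def:w}) of $w$ bound by $O\bigl((\log\log B)^{-1/2+\epsilon}\bigr)$ times $A^2 E(B)$.

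The principal obstacle is verifying that the Möbius tail $l > A/(cW^2)$ contributes only an acceptable error under the weaker hypothesis $A\geq B^2$ alone (rather than $A\geq B^2(\log B)^{C+1}$ as in Lemma \ref{lemma:D(A,B) asymptotic formula}): one has to combine the degenerate bounds from Lemma \ref{Lemma:Degenerate point count} dyadically in $\|\bfa\|$ and then average over $(\bx,\by)\in\Omega(B)$ via an $F(B)$-type argument, ensuring that no extraneous power of $\log B$ slips in that would spoil the target error rate $(\log\log B)^{-1/2+\epsilon}$.
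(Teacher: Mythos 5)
Your proposal is correct and follows essentially the same route as the paper: the same reduction to $\Scal_2^*(\Gamma^{\loc}_{\bx,\by}(W);\Tcal^{\loc}_{\bx,\by}(A,\alpha))$, the same Möbius inversion, partial summation, Lemma \ref{Lemma:General point count} with $I=2$, the $\Jcal$ volume evaluation, and the same $F(B)$/$w$ error accounting. The ``principal obstacle'' you flag at the end is in fact not one: since $\Gamma^{\loc}_{\bx,\by}$ has full rank and the weight is $\|\bfa\|^{-2}$, the trivial bound $\Scal_2(\cdot;\Tcal^{\loc}_{\bx,\by}(u,\alpha))\leq\Scal_2(\ZZ^4;\Ball_4(u))\ll u^2$ makes the Möbius tail contribute only $O(W^3/A)$ per pair, which is harmless already under $A\geq B^2$.
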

\begin{proof}
    Again, we follow the proof of \cite[Lemma 4.11]{BrLBSa}. In this case we let
    \begin{equation*}
        \Gamma^{\loc}_{\bx,\by}(W)=\Lambda_{\bx}^{(W)}\cap\Lambda_{\by}^{(W)},
    \end{equation*}
    and
    \begin{equation*}
        \Tcal^{\loc}_{\bx,\by}(A,\alpha)=\Ball_4(A)\cap\Ccal_{\bx}^{(\alpha)}\cap\Ccal_{\by}^{(\alpha)}.
    \end{equation*}
    For any given lattice $\Lambda\subset\ZZ^4$, any bounded region $\Rcal\subset\RR^4$ and an integer $k\geq 0$ again recall the definition (\ref{defn:Scal_k and ^star}) of the sums $\Scal_k^*(\Lambda;\Rcal)$ and $\Scal_k(\Lambda;\Rcal)$. Then, analogous to the lemmas before, we have
    \begin{equation}\label{equation:D^loc representation as S_2^* sum}
        D^{\loc}(A,B)=(\log B)^8\frac{\alpha^2W^2}{2}\sum_{(\bx,\by)\in\Omega(B)} \frac{\Scal_2^*\big(\Gamma^{\loc}_{\bx,\by}(W);\Tcal^{\loc}_{\bx,\by}(A,\alpha)\big)}{\|\bx\|\|\by\|}.
    \end{equation}
    We apply a Möbius inversion to get
    \begin{equation}\label{equation:S_2 Moebius convolution representation}
        \Scal_2^*\big(\Gamma^{\loc}_{\bx,\by}(W);\Tcal^{\loc}_{\bx,\by}(A,\alpha)\big) = \sum_{l\leq A}\frac{\mu(l)}{l^2}\Scal_2\Big(\Gamma_{\bx,\by}^{\loc}\Big(\frac{W}{\gcd(W,l)}\Big);\Tcal_{\bx,\by}^{\loc}\Big(\frac{A}{l},\alpha\Big)\Big).
    \end{equation}
    Again, for large $l$ the contribution is small. Like in (\ref{equation:S_1 bound against larger lattice}), for $u\geq 1$ real we have
    \begin{equation*}
        \Scal_2\Big(\Gamma_{\bx,\by}^{\loc}\Big(\frac{W}{\gcd(W,l)}\Big);\Tcal_{\bx,\by}^{\loc}(u,\alpha)\Big)\leq \Scal_2(\ZZ^4;\Ball_4(u))\ll u^2.
    \end{equation*}
    So for $l>A/W$, the contribution to the sum is
    \begin{equation*}
        \sum_{l>A/W}\frac{\mu(l)}{l^2} \Scal_2\Big(\Gamma_{\bx,\by}^{\loc}\Big(\frac{W}{\gcd(W,l)}\Big);\Tcal_{\bx,\by}^{\loc}\Big(\frac{A}{l},\alpha\Big)\Big) \ll \frac{W^3}{A}.
    \end{equation*}
    To deal with the remaining summands in (\ref{equation:S_2 Moebius convolution representation}), we let 
    \begin{equation*}
        S^{\loc}_{\bx,\by}(A,B;l)=\Scal_2\Big(\Gamma^{\loc}_{\bx,\by}\Big(\frac{W}{\gcd(W,l)}\Big);\Tcal^{\loc}_{\bx,\by}\Big(\frac{A}{l},\alpha\Big)\setminus\Tcal^{\loc}_{\bx,\by}(W,\alpha)\Big),
    \end{equation*}
    so that we have
    \begin{equation}\label{equation:S_2^star as S^loc representation}
        \Scal_2^*(\Gamma^{\loc}_{\bx,\by}(W),\Tcal^{\loc}_{\bx,\by}(A,\alpha))=\sum_{l\leq A/W} \frac{\mu(l)}{l^2} S^{\loc}_{\bx,\by}(A,B;l) +O\Big(\frac{W^3}{A}\Big).
    \end{equation}
    Let $d$ be an integer dividing $W$. For $1\leq i\leq 4$, the vectors $W\textbf{e}_i$ lie in $\Gamma_{\bx,\by}^{\loc}(d)$. Therefore it is
    \begin{equation*}
        \lambda_4\Big(\Gamma_{\bx,\by}^{\loc}\Big(\frac{W}{\gcd(W,l)}\Big)\Big)\leq W.
    \end{equation*}
    This allows us to apply Lemma \ref{Lemma:General point count} to $S_{\bx,\by}^{\loc}(A,B;l)$ after removing the weight with partial summation. We have
    \begin{multline*}
        S^{\loc}_{\bx,\by}(A,B;l)=\frac{l^2}{A^2}\Scal_0\Big(\Gamma^{\loc}_{\bx,\by}\Big(\frac{W}{\gcd(W,l)}\Big);\Tcal^{\loc}_{\bx,\by}\Big(\frac{A}{l},\alpha\Big)\Big)\\
        +2\int_W^{A/l}\Scal_0\Big(\Gamma^{\loc}_{\bx,\by}\Big(\frac{W}{\gcd(W,l)}\Big);\Tcal^{\loc}_{\bx,\by}(t,\alpha)\Big)\frac{dt}{t^3}+O\Big(\frac{W^3}{A}\Big).
    \end{multline*}
    Now, for any $t\in [W,A/l]$, the application of Lemma \ref{Lemma:General point count} yields 
    \begin{equation*}
        \Scal_0\Big(\Gamma^{\loc}_{\bx,\by}\Big(\frac{W}{\gcd(W,l)}\Big);\Tcal^{\loc}_{\bx,\by}(t,\alpha)\Big)=\frac{t^2}{\det\Big(\Gamma^{\loc}_{\bx,\by}\Big(\frac{W}{\gcd(W,l)}\Big)\Big)}\Bigg(\vol\big(\Tcal^{\loc}_{\bx,\by}(1,\alpha)\big)+O\Big(\frac{W}{t}\Big)\Bigg),
    \end{equation*}
    which shows that
    \begin{equation}\label{equation:S_loc formula}
        S_{\bx,\by}^{\loc}(A,B;l)=2\frac{A^2}{l^2}\frac{1}{\det\Big(\Gamma^{\loc}_{\bx,\by}\Big(\frac{W}{\gcd(W,l)}\Big)\Big)}\Bigg(\vol\big(\Tcal^{\loc}_{\bx,\by}(1,\alpha)\big)+O\Big(\frac{Wl}{A}\Big)\Bigg)+O\Big(\frac{W^3}{A}\Big).
    \end{equation}
    Recalling Definition (\ref{def:Jcal volume}), we see that the volume in the display above is given by
    \begin{equation*}
        \vol\big(\Tcal_{\bx,\by}^{\loc}(1,\alpha)\big)=\Jcal\Big(2\alpha\frac{\bx}{\|\bx\|},2\alpha\frac{\by}{\|\by\|}\Big).
    \end{equation*}
    Now Lemma \ref{lemma:Jcal volume formula} computes that volume in terms of $\alpha$ and $\Delta(\bx,\by)$. It is
    \begin{equation}\label{equation:T^loc volume formula}
        \vol\big(\Tcal_{\bx,\by}^{\loc}(1,\alpha)\big)= \frac{\pi}{2}\frac{\Delta(\bx,\by}{\alpha^2}\Big(1+O\Big(\min\Big\{1,\frac{\Delta(\bx,\by)^2}{\alpha^2}\Big\}\Big)\Big).
    \end{equation}
    For the determinant in (\ref{equation:S_loc formula}), we may use Lemma \ref{lemma:determinants of Lambda intersections} to obtain
    \begin{equation}\label{equation:Gamma_loc determinant formula}
        \det\Big(\Gamma^{\loc}_{\bx,\by}\Big(\frac{W}{\gcd(W,l)}\Big)\Big)^{-1}=\frac{\gcd(W,l)^2\Gcal(\bx,\by)}{W^2}\big(1+O\big(\textbf{1}_{\Gcal(\bx,\by)\nmid W/\text{rad}(W)}\big)\big).
    \end{equation}
    Using that $\Delta(\bx,\by)\geq 1$ and recalling the definition (\ref{def:Ecal_x,y}) of $\Ecal_{\bx,\by}(B)$, we combine (\ref{equation:S_loc formula}) with (\ref{equation:T^loc volume formula}) and (\ref{equation:Gamma_loc determinant formula}) to get
    \begin{equation*}
        S_{\bx,\by}^{\loc}(A,B;l)=\pi\frac{A^2\gcd(W,l)^2}{\alpha^2W^2l^2}\Delta(\bx,\by)\Gcal(\bx,\by)\Big(1+O\Big(\Ecal_{\bx,\by}(B)+\frac{\alpha^2Wl}{A}\Big)\Big)+O\Big(\frac{W^3}{A}\Big).
    \end{equation*}
    By (\ref{W_bound}) we have $W\ll (\log B)^3\ll (\log A)^3$. We put this back into (\ref{equation:S_2^star as S^loc representation}) together with the fact that $\Gcal(\bx,\by),\Delta(\bx,\by) \geq 1$ and obtain
    \begin{align*}
        \Scal_2^*(\Gamma^{\loc}_{\bx,\by}(W),\Tcal^{\loc}_{\bx,\by}(A,\alpha))=&
        \frac{\pi A^2}{\alpha^2W^2}\Delta(\bx,\by)\Gcal(\bx,\by)\\
        &\times\Bigg(\sum_{l\leq A/W}\mu(l)\frac{\gcd(W,l)^2}{l^4}+O\Big(\Ecal_{\bx,\by}(B)+\frac{\alpha^2W}{A}\Big)\Bigg).
    \end{align*}
    The sum over $l\leq A/W$ is part of an absolutely convergent series. In particular, we can complete the sum and use that $\gcd(W,l)=l$ for all $l\leq w$. Therefore we have
    \begin{equation*}
        \sum_{l\leq A/W}\mu(l)\frac{\gcd(W,l)^2}{l^4}=\frac{1}{\zeta(2)}+O\Big(\frac{W}{A}+\frac{1}{w}\Big).
    \end{equation*}
    For the error term, note that $w\ll A/\alpha^2W$. Again recalling the definition (\ref{def:F(B)}) of $F(B)$ and putting everything back into (\ref{equation:D^loc representation as S_2^* sum}), we finally arrive at
    \begin{equation*}
        D^{\loc}(A,B)=\frac{\pi}{\zeta(2)}A^2E(B)\Big(1+O\Big(\frac{1}{w}+\frac{F(B)}{B^6}\Big)\Big).
    \end{equation*}
    Now using Lemma \ref{lemma:F(B) saving} and recalling that $w\gg (\log\log B)^{1/2}$ completes the proof.
\end{proof}

\subsection{First moment bounds}\label{section:first moment bounds}$~$

In this section we use the results of the previous subsection, together with first moment estimates to complete the proof of Proposition \ref{prop:variance_upper_bound}. Recall the definitions (\ref{def:Delta^mix and ^loc}) of $\Delta^{\mix}_{\bfa}(B)$ and $\Delta_{\bfa}^{\loc}(B)$. We let
\begin{equation}\label{def:K(A,B)}
    K(A,B)=\sum_{\substack{\bfa\in\ZZ^4_{\prim}\\\|\bfa\|\leq A}}\big((\log B)^4N_{\bfa}(B)+\Delta^{\mix}_{\bfa}(B)+\Delta^{\loc}_{\bfa}(B)\big).
\end{equation}
Further recall the definition (\ref{defn:N^loc}) of $N_{\bfa}^{\loc}(B)$ and consider
\begin{equation}\label{eq:V(A,B) definition}
    V(A,B)=\sum_{\substack{\bfa\in\ZZ^4_{\prim}\\\|\bfa\|\leq A}}\big|N_{\bfa}(B)-N^{\loc}_{\bfa}(B)\big|^2.
\end{equation}
Opening the square, we obtain
\begin{equation*}
    V(A,B)=D(A,B)-2D^{\mix}(A,B)+D^{\loc}(A,B)+K(A,B).
\end{equation*}
Now by the lemmas \ref{lemma:D(A,B) asymptotic formula}, \ref{lemma:D^mix asymptotic formula} and $\ref{lemma:D^loc asymptotic formula}$, for $A\geq B(\log B)^2$ there is a $1/2>\delta>0$ such that we have
\begin{equation*}
    D(A,B)-2D^{\mix}(A,B)+D^{\loc}(A,B)\ll \frac{A^2B^6}{(\log\log B)^{1/2-\delta}}.
\end{equation*}
To establish Proposition $\ref{prop:variance_upper_bound}$, we need to give an upper bound for $K(A,B)$ for $A$ and $B$ in a suitable range.
\begin{lemma}
    Let $B\leq A\leq B^3(\log B)^{-4}(\log\log B)^{-1}$. Then we have that
    \begin{equation*}
        K(A,B)\ll \frac{A^2B^6}{(\log\log B)}.
    \end{equation*}
\end{lemma}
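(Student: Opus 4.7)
The plan is to bound each of the three summands in (\ref{def:K(A,B)}) separately by interchanging the order of summation and applying the lattice point counting results from Section \ref{section:geometry_of_numbers}. For upper bounds we may drop the primitivity condition on $\bfa$, and we will repeatedly use the elementary estimate
\begin{equation*}
    \sum_{\bx\in\Pri(B)}\|\bx\|^{-k}\ll\frac{B^{4-k}}{(\log B)^4}\qquad(k=0,1,2),
\end{equation*}
which follows by dyadically decomposing $\|\bx\|$ and applying the prime number theorem. The $(\log B)^{-4}$ saving from the prime restriction is exactly what cancels one power of the $(\log B)^8$-factors present in each term.

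For the first summand, interchanging order gives
\begin{equation*}
    (\log B)^4\sum_{\substack{\bfa\in\ZZ^4_{\prim}\\\|\bfa\|\leq A}}N_{\bfa}(B)=(\log B)^8\sum_{\bx\in\Pri(B)}\#\big(\Lambda_{\bx}\cap\ZZ^4_{\prim}\cap\Ball_4(A)\big).
\end{equation*}
By Lemma \ref{lemma:determinant of Lambda}, $\Lambda_{\bx}$ is primitive of rank $3$ with $\det(\Lambda_{\bx})=\|\bx\|\leq B$, and by Lemma \ref{lemma:succ minima upper bound} its successive minima are $\ll B\leq A$. An application of the lattice counting lemmas then bounds the inner count by $\ll A^3/\|\bx\|$. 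Combined with the preliminary estimate at $k=1$, this yields a total of $(\log B)^4A^3B^3$, which is $\ll A^2B^6/\log\log B$ precisely when $A\leq B^3(\log B)^{-4}(\log\log B)^{-1}$, i.e.\ at the upper end of the range in the hypothesis. This is the binding summand.

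The remaining two summands are treated similarly, but with partial summation to handle the weights $\|\bfa\|^{-1}$ and $\|\bfa\|^{-2}$. For $\Delta_{\bfa}^{\mix}(B)$, we again encounter $\Lambda_{\bx}$: partial summation against the rank-$3$ count $\#(\Lambda_{\bx}\cap\Ball_4(T))\ll T^3/\|\bx\|+T^2$ (immediate from Lemmas \ref{Lemma:General point count} and \ref{Lemma:Degenerate point count} using $\lambda_1\geq 1$) bounds the inner sum by $\ll A^2/\|\bx\|+A$. Feeding this back into the outer sum (using the preliminary estimate at $k=1,2$) together with $\alpha W\ll(\log B)^4$ from (\ref{W_bound}) and (\ref{defn:alpha}) yields an overall bound $\ll(\log B)^8A^2B^2$. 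For $\Delta_{\bfa}^{\loc}(B)$, the relevant lattice is the full-rank $\Lambda_{\bx}^{(W)}$, whose successive minima are all at most $W$ (since $W\mathbf{e}_i$ lies in it) and whose determinant $W/\gcd(\Gcal(\bx),W)$ is at least $W/w$. Using that $\Ccal_{\bx}^{(\alpha)}\cap\Ball_4(1)$ has volume $\ll 1/\alpha$, Lemma \ref{Lemma:General point count} (for $T\geq W$) followed by partial summation gives an inner bound $\ll A^2w/(\alpha W)+W^2$, and then $w\ll\log\log B$ from (\ref{def:w}) produces an overall contribution of $\ll(\log B)^8(\log\log B)A^2B^2$.

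The main obstacle is essentially bookkeeping: I need to verify that each factor of $\alpha$, $W$, $w$ and $(\log B)$ appearing in the three summands is absorbed either by the $(\log B)^{-4}$ saving from the prime sum over $\bx$ or by spare powers of $B$. The first summand is the binding one and drives the shape of the hypothesis on $A$; the other two are smaller by ample powers of $\log B$, and involve no subtleties beyond careful partial summation of lattice point counts.
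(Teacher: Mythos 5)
Your proposal is correct and follows essentially the same route as the paper: split $K(A,B)$ into the three summands, interchange the order of summation, bound the inner lattice counts via Lemmas \ref{Lemma:General point count} and \ref{Lemma:Degenerate point count} (with partial summation for the weighted terms), and use the $(\log B)^{-4}$ saving from the prime sum over $\bx$, with the $N_{\bfa}(B)$ term being the binding one that forces $A\leq B^3(\log B)^{-4}(\log\log B)^{-1}$. Your treatment of $\Delta_{\bfa}^{\loc}(B)$ is in fact slightly sharper than the paper's (which simply discards the congruence and cone conditions on $\bfa$ and accepts an extra $(\alpha W)^2$), and your only slip is cosmetic: for primitive $\bx$ the lattice $\Lambda_{\bx}^{(W)}$ has determinant exactly $W$, though your lower bound $W/w$ is harmless since you only use it in that direction.
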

\begin{proof}
    We split the sum in (\ref{def:K(A,B)}) into three parts according to $N_{\bfa}(B)$, $\Delta_{\bfa}^{\mix}(B)$ and $\Delta_{\bfa}^{\loc}(B)$ and estimate each one individually. By the definition (\ref{defn:N_a(B)}) of $N_{\bfa}(B)$ we have
    \begin{equation*}
        \sum_{\substack{\bfa\in\ZZ^4_{\prim}\\\|\bfa\|\leq A}}N_{\bfa}(B)=(\log B)^4\sum_{\bx\in\Pri(B)}\#\big\{\bfa\in\ZZ^4_{\prim}\cap\Lambda_{\bx}:\|\bfa\|\leq A\big\}.
    \end{equation*}
    By Lemma \ref{lemma:determinant of Lambda}, the determinant of $\Lambda_{\bx}$ is given by $\|\bx\|$. Note that $\|\bx\|\leq B\leq A$ and hence we are in position to apply Lemma \ref{Lemma:General point count}. We disregard the error term in said Lemma to just give an upper bound in the form of
    \begin{equation*}
        \#\big\{\bfa\in\ZZ^4_{\prim}\cap\Lambda_{\bx}:\|\bfa\|\leq A\big\}\ll\frac{A^3}{\|\bx\|}.
    \end{equation*}
    Now we sum this over $\bx\in\Pri(B)$ and obtain
    \begin{equation*}
         \sum_{\substack{\bfa\in\ZZ^4_{\prim}\\\|\bfa\|\leq A}}N_{\bfa}(B)\ll A^3(\log B)^4\sum_{\bx\in\Pri(B)}\frac{1}{\|\bx\|}.
    \end{equation*}
    To produce a bound for the remaining sum, we can apply partial summation together with the prime number theorem. We arrive at the estimate
    \begin{equation}\label{1st moment:N_a(B) bound}
        \sum_{\substack{\bfa\in\ZZ^4_{\prim}\\\|\bfa\|\leq A}}(\log B)^4N_{\bfa}(B)\ll(\log B)^8A^3\frac{B^3}{(\log B)^4}
    \end{equation}
    To deal with the sum over $\Delta_{\bfa}^{\mix}(B)$, we apply partial summation to the sum over $\|\bfa\|\leq A$. Thereafter we can continue as in the previous case to obtain
    \begin{equation}\label{1st moment:Delta^mix bound}
        \sum_{\substack{\bfa\in\ZZ^4_{\prim}\\\|\bfa\|\leq A}}\Delta_{\bfa}^{\mix}(B)\ll \alpha WA^2(\log B)^8\sum_{\bx\in\Pri(B)}\frac{1}{\|\bx\|^2}
        \ll \alpha W A^2B^2(\log B)^4.
    \end{equation}
    Similarly, for $\Delta_{\bfa}^{\loc}(B)$, we get
    \begin{equation*}
        \sum_{\substack{\bfa\in\ZZ^4_{\prim}\\\|\bfa\|\leq A}}\Delta_{\bfa}^{\loc}(B)\ll \alpha^2W^2A^2(\log B)^8\sum_{\bx\in\Pri(B)}\frac{1}{\|\bx\|^2}\ll \alpha^2W^2A^2B^2(\log B)^4.
    \end{equation*}
    Combining the display above with (\ref{1st moment:Delta^mix bound}) and (\ref{1st moment:N_a(B) bound}), we find that $K(A,B)$ is bounded by
    \begin{equation*}
     A^2B^2(\log B)^4\big(AB+\alpha W+(\alpha W)^2\big).
    \end{equation*}
    Recall that $\alpha=\log B$ and $W\ll(\log B)^3$. Therefore the last two summands are small enough and we only need to pick $A$ small enough in terms of $B$ so that $AB\ll B^4(\log B)^{-4}(\log\log B)^{-1}$. A choice of
    \begin{equation*}
        A\leq \frac{B^3}{(\log B)^4(\log\log B)} 
    \end{equation*}
    is sufficient.
\end{proof}

\section{The local counting function}\label{section:N^loc_lower_bound}$~$
The goal of this section is to prove Proposition \ref{prop:number_of_bad_N^loc}. We define two local factors in the same way as done in \cite[Section 4]{Hol} and then compare the local counting function $N_{\bfa}^{\loc}(B)$ to these. Afterwards we establish lower bounds for each of them to obtain the proposition. To start, let $\gamma>0$ and $\bv,\bfa\in\RR^4$. We let
\begin{equation*}
    \tau(\bfa,\gamma)=\gamma\cdot\vol\big(\{\buld\in(\Ball_4(1)\cap\RR^4_+):\bfa\in\Ccal_{\buld}^{(\gamma)}\}\big).
\end{equation*}
For any integer $Q\geq 1$ and $\bfa\in\ZZ^4$, we let
\begin{equation}\label{def:Rfrak}
    \mathfrak{R}(Q)=\big((\ZZ/Q\ZZ)^*\big)^4,
\end{equation}
and
\begin{equation}\label{def:sigma(a,Q)}
    \sigma(\bfa,Q):=\frac{Q}{\phi(Q)^4}\cdot\#\{\textbf{b}\in\mathfrak{R}(Q):\,\bfa\in\Lambda^{(Q)}_{\textbf{b}}\}.
\end{equation}
Recalling the definitions (\ref{defn:alpha}) and (\ref{def:W}) of $\alpha$ and $W$, we then define
\begin{equation}\label{def:Singular Integral}
    \Jfrak_{\bfa}(B)=\tau(\bfa,\alpha),
\end{equation}
and
\begin{equation}\label{def:SSSingular series}
    \SSS_{\bfa}(B)=\sigma(\bfa,W).
\end{equation}
We will see that $\SSS_{\bfa}(B)$ behaves like a truncated singular series that one would obtain by an application of the circle method to the counting problem (\ref{Equation 1}). Accordingly, $\Jfrak_{\bfa}(B)$ behaves similar to the corresponding singular integral. We relate these two quantities to the local counting function $N_{\bfa}^{\loc}(B)$ with the following lemma.
\begin{lemma}\label{lemma:N^loc_and_SigmaJ_comparison}
Let $A\geq 2$, $B\geq 3$ and $\bfa\in\ZZ^4_{\prim}$ with $\|\bfa\|\leq A$. For all $C>0$ we have
\begin{equation*}
    \Sfrak_{\bfa}(B)\Jfrak_{\bfa}(B)\ll \frac{A}{B^3}N_{\bfa}^{\loc}(B)+\frac{1}{(\log B)^C}.
\end{equation*}
\end{lemma}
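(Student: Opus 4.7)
The plan is to establish the equivalent lower bound
\begin{equation*}
    N^{\loc}_{\bfa}(B) \gg \frac{B^3}{\|\bfa\|}\Sfrak_{\bfa}(B)\Jfrak_{\bfa}(B) - O\!\Bigl(\frac{B^3}{\|\bfa\|(\log B)^{C}}\Bigr),
\end{equation*}
from which the lemma follows upon multiplying by $\|\bfa\|/B^3\leq A/B^3$. Heuristically $\Sfrak_{\bfa}(B)\Jfrak_{\bfa}(B)$ is exactly the expected density of $\bx\in\Pri(B)$ satisfying $\bfa\in\Lambda^{(W)}_{\bx}\cap\Ccal^{(\alpha)}_{\bx}$, and the lemma asserts that the actual count in $N^{\loc}_{\bfa}(B)$ is at least this expectation up to a negligible error.

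First I would split the sum in \eqref{defn:N^loc} by the residue class $\bald\in\Rfrak(W)$ of $\bx$ modulo $W$. Since $\bfa\in\Lambda^{(W)}_{\bx}$ depends only on $\bx\bmod W$, and by \eqref{def:SSSingular series} there are $\phi(W)^4\Sfrak_{\bfa}(B)/W$ valid residue classes,
\begin{equation*}
    N^{\loc}_{\bfa}(B) = \frac{(\log B)^4\alpha W}{\|\bfa\|}\sum_{\substack{\bald\in\Rfrak(W)\\ \bfa\in\Lambda^{(W)}_{\bald}}}T_{\bald}(B),\qquad T_{\bald}(B):=\sum_{\substack{\bx\in\Pri(B),\ \bx\equiv\bald\,(W)\\ \bfa\in\Ccal^{(\alpha)}_{\bx}}}\frac{1}{\|\bx\|}.
\end{equation*}
Next I would use Siegel--Walfisz to compare each $T_{\bald}(B)$ with its continuous analogue: since $W\ll(\log B)^3$ by \eqref{W_bound}, for any $C''>0$ we have $\pi(t;W,b)=\mathrm{li}(t)/\phi(W)+O(t/(\log t)^{C''})$, and fourfold partial summation against the weight $\mathbf{1}_{\bfa\in\Ccal^{(\alpha)}_{\bx}}/\|\bx\|$ gives
\begin{equation*}
    T_{\bald}(B) = \frac{1}{\phi(W)^4}\int_{[2,B]^4\cap\Ccal^{(\alpha)}_{\bfa}}\frac{d\bx}{\|\bx\|\prod_i\log x_i} + O\!\Bigl(\frac{B^3}{\phi(W)^4(\log B)^{C''}}\Bigr).
\end{equation*}

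Because $\log x_i\leq\log B$ on the domain, the integrand is pointwise at least $1/((\log B)^4\|\bx\|)$, and $\Ball_4(B)\cap\RR^4_+\subset[0,B]^4$ with the complementary sliver near $\{x_i<2\}$ contributing only $O(B^2)$ to the $1/\|\bx\|$-integral. Polar coordinates together with the scale invariance of $\Ccal^{(\alpha)}_{\bfa}$ and \eqref{def:Singular Integral} give $\int_{\Ball_4(B)\cap\RR^4_+\cap\Ccal^{(\alpha)}_{\bfa}}d\bx/\|\bx\|=4B^3\Jfrak_{\bfa}(B)/(3\alpha)$. Summing $T_{\bald}(B)$ over the valid $\bald$ and substituting back, the main terms combine to $\gg B^3\Sfrak_{\bfa}(B)\Jfrak_{\bfa}(B)/\|\bfa\|$, while the Siegel--Walfisz error, summed over at most $\phi(W)^4$ classes and multiplied by the prefactor $(\log B)^4\alpha W/\|\bfa\|\ll(\log B)^8/\|\bfa\|$, contributes $O\bigl(B^3(\log B)^{8-C''}/\|\bfa\|\bigr)$; choosing $C''$ sufficiently large in terms of $C$ finishes the argument.

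The hard part will be justifying the fourfold partial summation, since the integrand $\mathbf{1}_{\bfa\in\Ccal^{(\alpha)}_{\bx}}/\|\bx\|$ is not smooth. One must verify that its total variation in each coordinate is at most polynomial in $B$, which is possible because along any coordinate axis the condition $\bfa\in\Ccal^{(\alpha)}_{\bx}$ is governed by a quadratic inequality in $x_k$ and hence the indicator switches value only a bounded number of times. Granted this, the $(\log B)^{C''}$-savings in the Siegel--Walfisz remainder survive the summation over residue classes, and the remainder of the proof is routine bookkeeping of main terms.
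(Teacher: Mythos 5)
Your proposal follows the same skeleton as the paper's proof: drop the weight $1/\|\bx\|$ against $\|\bfa\|\leq A$ and $\|\bx\|\asymp B$, split the sum in $N_{\bfa}^{\loc}(B)$ into reduced residue classes $\bald\in\Rfrak(W)$ (so that $\Sfrak_{\bfa}(B)$ counts the admissible classes via (\ref{def:sigma(a,Q)})), and then count primes in each class inside the region $\{\bx\in\Ball_4(B)\cap\RR^4_+:\bfa\in\Ccal_{\bx}^{(\alpha)}\}$, whose normalized volume is $\Jfrak_{\bfa}(B)$. Where you diverge is the execution of the prime count: the paper defers to Holdridge's argument, tiling the region by cubes of side a power of $B$ and applying a short-interval Siegel--Walfisz theorem, whereas you propose direct fourfold partial summation against the classical Siegel--Walfisz theorem. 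Your route is viable here because each coordinate section of the region is a union of $O(1)$ intervals (the condition is a quadratic inequality in each $x_k$) of length $\gg\|\bx\|/\alpha\gg B/\log B$ on the bulk, so the full-length error $O(B(\log B)^{-C''})$ loses nothing essential; the cube decomposition buys a cleaner treatment of the boundary and of the iterated variation bounds, which is exactly the part you flag as "the hard part." Two small inaccuracies, neither fatal: your first display should be an inequality $\geq$ rather than an equality, since primes $p\leq w$ divide $W$ and their residue classes fall outside $\Rfrak(W)$ (the paper absorbs these into an $O(W)$ term); and the Siegel--Walfisz remainder after fourfold summation carries at most $\phi(W)^{3}$, not $\phi(W)^{4}$, in the denominator, so after summing over the $\leq\phi(W)^4\ll(\log B)^{12}$ admissible classes a residual power of $\log B$ survives --- harmless, since $C''$ may be taken as large as needed.
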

\begin{proof}
    Recall the definition (\ref{defn:N^loc}) of $N_{\bfa}^{\loc}(B)$. We have
    \begin{equation*}
        (\log B)^4\sum_{\substack{\bx\in\Pri((B))\\\bfa\in\Lambda_{\bx}^{(W)}\cap\Ccal_{\bx}^{(\alpha)}}}1\ll\frac{AB}{\alpha W}N_{\bfa}^{\loc}(B).
    \end{equation*}
    As done in \cite[Section 5]{BrLBSa}, we split the sum over $\bx\in\Pri(B)$ into residue classes modulo $W$ to obtain
    \begin{equation*}
        \sum_{\substack{\bx\in\Pri(B)\\\bfa\in\Lambda_{\bx}^{(W)} \cap\,\Ccal_{\bx}^{(\alpha)}}}1\geq \sum_{\substack{\bald\in\Rfrak(W)\\\bfa\in\Lambda_{\bald}^{(W)}}}\#\Big\{\bx\in\Pri(B)\,:\begin{array}{l}
         \bx\equiv\bald\,(\text{mod }W)  \\
         \bfa\in\Ccal_{\bx}^{(\alpha)}
    \end{array}\Big\}+O(W).
    \end{equation*}
    We let
    \begin{equation*}
        R_{\bfa,B}=\{\buld\in\Ball_4(B)\cap\RR^4_+:\bfa\in\Ccal_{\buld}^{(\alpha)}\},
    \end{equation*}
    and
    \begin{equation*}
        \Pri_{\buld}=\{\bx\in\Pri:\bx\equiv\buld\,(\text{ mod }W)\}.
    \end{equation*}
    Now the set in the sum above is the same as $R_{\bfa,B}\cap\Pri_{\buld}$, therefore we get that
    \begin{equation}
        \frac{AB}{\alpha W}N^{\loc}_{\bfa}(B)\gg (\log B)^4\sum_{\substack{\buld\in\Rfrak(W)\\\bfa\in\Lambda_{\buld}^{(W)}}}\#(R_{\bfa,B}\cap\Pri_{\buld})+O\big((\log B)^4W\big).
    \end{equation}
    We can now directly follow the proof of \cite[Lemma 4.2]{Hol}, by picking a real $3/5<\theta<1$ and cutting the region $R_{\bfa,B}$ into cubes of side length $B^{\theta/3}$. Then an application of a short-interval version of the Siegel-Walfisz theorem can yield a lower bound for $\#R_{\bfa,B}\cap \Pri_{\buld}$. This works exactly as in \cite[pp 25-27]{Hol} by putting $n+1$ in their setting to be $4$ in our setting. 
\end{proof}

Next we combine the previous lemma with lower bounds for $\SSS_{\bfa}(B)$ and $\Jfrak_{\bfa}(B)$ that we will obtain in the following subsections. Recall the definition of the set $\LL^{\loc}$ which contains all primitive integer tuples whose the coordinates are not all of the same sign and for which Equation (\ref{Equation 1}) has a solution in the reduced residues modulo every prime. Then we will prove the following.
\begin{lemma}\label{lemma:singular terms av lower bound}
    Let $B\geq 2$ and $A\geq B^2$. Let $C>0$ and $\kappa>0$, then we have that
    \begin{equation*}
        \#\big\{\bfa\in\LL^{\loc}(A):\,\SSS_{\bfa}(B)\Jfrak_{\bfa}(B)\leq C(\log\log A)^{-\kappa}\big\}\ll \frac{A^4}{(\log\log A)^{\kappa/3}},
    \end{equation*}
    where the implied constant depends at most on $C$ and $\kappa$.
\end{lemma}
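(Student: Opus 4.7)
The plan is to reduce the product inequality $\SSS_{\bfa}(B)\Jfrak_{\bfa}(B) \leq C(\log\log A)^{-\kappa}$ to a small-$\Jfrak_{\bfa}(B)$ count by establishing a uniform positive lower bound on $\SSS_{\bfa}(B)$, and then to convert smallness of $\Jfrak_{\bfa}(B)$ into a size constraint on one coefficient of $\bfa$ which can be counted directly.

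First, I would show $\SSS_{\bfa}(B) \gg 1$ uniformly for $\bfa \in \LL^{\loc}$. By the Chinese Remainder Theorem, $\sigma(\bfa,W) = \prod_{p^k\|W}\sigma(\bfa,p^k)$. For each prime $p$, a Fourier-analytic computation on $((\ZZ/p^k\ZZ)^*)^4$, case-split according to how many coordinates of $\bfa$ vanish modulo $p$, gives $\sigma(\bfa,p^k) \geq 1 - c/p^2$ for $\bfa \in \LL^{\loc}$; Hensel's lemma handles the passage from $p$ to $p^k$. Since the Euler product $\prod_p(1-c/p^2)$ converges, one concludes $\SSS_{\bfa}(B) \gg 1$ uniformly on $\LL^{\loc}$, and the hypothesis reduces to $\Jfrak_{\bfa}(B) \leq C'(\log\log A)^{-\kappa}$.

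Second, I would analyze the singular integral geometrically. The thick-slab representation of $\tau(\bfa,\alpha)$ shows that $\Jfrak_{\bfa}(B)$ is comparable, up to constants, to the $3$-dimensional area of the hyperplane slice $\Sigma_{\bfa}=\{\buld\in\Ball_4(1)\cap\RR_+^4 : \langle\bfa,\buld\rangle=0\}$, which is positive exactly when $\bfa$ has mixed signs. Splitting by sign pattern, the case $(2,2)$ yields a linear bound on the minimum coefficient, and the extremal case is $(3,1)$ (or symmetrically $(1,3)$). Writing $\bfa=(a_1,a_2,a_3,-a_4)$ with $a_i>0$ and $a_4=\min_i a_i$, the slice $\Sigma_{\bfa}$ is parametrised by $u_4=(a_1u_1+a_2u_2+a_3u_3)/a_4$ subject to $u_4\leq 1$, restricting $(u_1,u_2,u_3)$ to a simplex of volume $\asymp (a_4/\|\bfa\|)^3$, and a Plücker-coordinate computation of the Jacobian shows the slice area is $\asymp (a_4/\|\bfa\|)^2$. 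If instead the smallest coefficient has the majority sign, the slice has full extent in three directions with the small coefficient only shrinking the slab in one direction, and the area is $\asymp 1$. Thus $\Jfrak_{\bfa}(B)\leq\delta$ forces $\min_i |a_i| \ll \delta^{1/2}\|\bfa\|$ in the worst case.

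Third, the count of $\bfa\in\ZZ^4\cap\Ball_4(A)$ with $\min_i|a_i| \leq \delta^{1/2}A$, summed over the four choices of smallest coordinate, is $\ll A^3\cdot\delta^{1/2}A = A^4\delta^{1/2}$. Since $\delta^{1/2}\leq\delta^{1/3}$ for $\delta\in(0,1)$, this is $\ll A^4\delta^{1/3}$, and substituting $\delta\asymp(\log\log A)^{-\kappa}$ yields the desired $\ll A^4(\log\log A)^{-\kappa/3}$. The main obstacle is the geometric analysis in Step 2: recognising that the slice area is small only when the isolated-sign coefficient is the smallest in magnitude, and carrying out the Jacobian computation to obtain the quadratic dependence on $a_{\min}/\|\bfa\|$. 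This dichotomy between the "thin slab" and "small slice" geometries is the key insight; once it is in place the counting step is immediate.
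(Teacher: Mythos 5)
Your overall architecture matches the paper's: a uniform lower bound $\SSS_{\bfa}(B)\gg 1$ on $\LL^{\loc}$ (your Step 1 is essentially the paper's Ramanujan-sum computation of $\sigma(\bfa,p^l)$, which in fact shows the Euler factor is independent of $l$, so Hensel's lemma is not even needed), followed by a pointwise lower bound on $\Jfrak_{\bfa}(B)$ in terms of $\delta=\|\bfa\|_{\min}/\|\bfa\|_{\infty}$ and a count of tuples with one small coordinate. Where you diverge is the geometric step. The paper does not compute the slice area: it produces a single point $\bx$ on the hyperplane slice with all coordinates $\gg\delta$, places a ball of radius $\asymp\delta$ around it, and obtains $\Jfrak_{\bfa}(B)\gg\delta^3\min\{\gamma\delta,1\}$, which after the case split on $\min\{\gamma\delta,1\}$ yields the exponent $\kappa/3$. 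Your Jacobian computation of the full slice area gives the sharper bound $\Jfrak_{\bfa}(B)\gg\delta^2$ in the worst configuration, hence a count $\ll A^4(\log\log A)^{-\kappa/2}$, which is stronger than what is claimed. So your route, if completed, actually improves the exponent; the paper's cruder ball argument is easier to make rigorous uniformly in the sign pattern.

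Two points in your Step 2 need more care than you give them. First, the assertion that $\Jfrak_{\bfa}(B)$ is \emph{comparable} to the slice area is false as an upper bound when $\gamma\delta\leq 1$ (there the thickened region has volume $\asymp\gamma^{-3}$, so $\Jfrak_{\bfa}(B)\asymp\gamma^{-2}\gg\delta^2$); you only need the lower bound $\Jfrak_{\bfa}(B)\gg(\text{slice area})$, but even that is not automatic, because the slab $\{|\langle\bfa,\buld\rangle|\leq\|\bfa\|\|\buld\|/2\gamma\}$ is truncated by the boundary of $\Ball_4(1)\cap\RR^4_+$ and a thickening of a surface near the boundary of a convex body can have volume much smaller than width times area. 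One must check (e.g.\ by Fubini in the normal direction) that the parallel slices $\Sigma_t$ for $t$ on at least one side of $0$ have area $\gg$ that of $\Sigma_0$; in your $(3,1)$ parametrisation this holds because the simplex only grows as the hyperplane moves into the orthant. Second, your case analysis by sign pattern and location of the minimal coefficient does not obviously cover configurations with several small coefficients; a cleaner uniform statement is that the slice, being convex and containing both (the closure of) the origin and a $3$-ball of radius $\asymp\delta$ centred at distance $\asymp 1$ from it (the paper's point $\bx$), contains a cone of $3$-volume $\gg\delta^2$, which gives your bound in all cases at once.
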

This lemma, together with Lemma \ref{lemma:N^loc_and_SigmaJ_comparison}, directly yields Proposition \ref{prop:number_of_bad_N^loc}. We use the following subsection to establish a lower bound for the factor $\SSS_{\bfa}(B)$ and then use Subsection \ref{subsection:sing_series_lower_bound} to complete the prove of the lemma above by giving an average bound on $\Jfrak_{\bfa}(B)$.

\subsection{The singular series is bounded from below.}$~$
Let $Q\geq 1$ be an integer and $\bfa\in\ZZ^4_{\prim}$. Recall the introduction (\ref{def:Rfrak}) of the set $\Rfrak(Q)$ at the beginning of this section. We let
\begin{equation}\label{def:rho_a(Q)}
    \rho_{\bfa}(Q)=\#\{\buld\in\Rfrak(Q):\langle\bfa,\buld\rangle\equiv 0\,(\text{ mod }Q)\}.
\end{equation}
By the Chinese remainder theorem the function above is multiplicative in $Q$. Therefore we have that
\begin{equation}\label{Equation:sigma-rho_relation}
    \sigma(\bfa,Q)=\frac{Q}{\phi(Q)^4}\rho_{\bfa}(Q)=\prod_{p^l\|Q}\frac{p^l}{\phi(p^l)^4}\rho_{\bfa}(p^l).
\end{equation}
Given integers $q\geq 1$ and $r$, we introduce the Ramanujan sum
\begin{equation*}
    c_q(r)=\sideset{}{^*}\sum_{a\,(q)}e(ar/q),
\end{equation*}
so that, writing $\bfa=(a_1,\dots,a_4)$, it is
\begin{align}\label{Equation:rho_ramanujan_darstellung}
    \rho_{\bfa}(q)&= \frac{1}{q}\sum_{\bald\in\Rfrak(q)}\sum_{c\,(q)} e\big((a_1b_1+a_2b_2+\dots+a_4b_4)c/q\big)\\
    &=\frac{1}{q}\sum_{c\,(q)}c_q(a_1c)c_q(a_2c)\dots c_q(a_4c).\nonumber
\end{align}
By (\ref{Equation:sigma-rho_relation}), we only need to consider the case $q=p^l$ for primes $p$ and integers $l\geq 1$. For this, we have the following well known lemma.
\begin{lemma}
    Let $p$ be a prime and $l\geq 1,\, r$ be integers. Then we have that
    \begin{equation*}
        c_{p^l}(r)=\begin{cases}
            0 &\text{if }p^{l-1}\nmid r,\\
            -p^{l-1} &\text{if }p^{l-1}|r,\,p^l\nmid r,\\
            \phi(p^l)&\text{if }p^l|r.
        \end{cases}
    \end{equation*}
\end{lemma}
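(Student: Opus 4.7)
The plan is to prove this classical identity by Möbius inversion over the divisors of $\gcd(a,p^l)$. Starting from the definition
\begin{equation*}
    c_{p^l}(r)=\sideset{}{^*}\sum_{a\,(p^l)}e(ar/p^l)=\sum_{a\,(p^l)}e(ar/p^l)\sum_{d\mid\gcd(a,p^l)}\mu(d),
\end{equation*}
I would swap the order of summation. Since $p^l$ is a prime power, the only divisors $d$ of $p^l$ for which $\mu(d)\neq 0$ are $d=1$ and $d=p$, so only these two contribute.

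Next I would evaluate each contribution separately. For $d=1$, the inner sum over $a\,(p^l)$ is a full geometric sum, which equals $p^l$ when $p^l\mid r$ and $0$ otherwise. For $d=p$, after writing $a=pb$ with $b$ ranging over residues modulo $p^{l-1}$, the sum becomes $\sum_{b\,(p^{l-1})}e(br/p^{l-1})$, which equals $p^{l-1}$ if $p^{l-1}\mid r$ and $0$ otherwise. Carrying the Möbius weight along, this shows
\begin{equation*}
    c_{p^l}(r)=p^l\cdot\mathbf{1}_{p^l\mid r}-p^{l-1}\cdot\mathbf{1}_{p^{l-1}\mid r}.
\end{equation*}

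Finally I would unpack the three cases of the statement. If $p^{l-1}\nmid r$, both indicator functions vanish and $c_{p^l}(r)=0$. If $p^{l-1}\mid r$ but $p^l\nmid r$, only the second term contributes and yields $-p^{l-1}$. If $p^l\mid r$, both terms contribute and the result is $p^l-p^{l-1}=p^{l-1}(p-1)=\phi(p^l)$. There is no serious obstacle here; the only thing to watch is being careful with the change of variables in the $d=p$ term so that the divisibility condition is correctly stated as $p^{l-1}\mid r$ rather than $p^l\mid r$.
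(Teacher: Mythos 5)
Your argument is correct and complete: detecting the coprimality condition with $\sum_{d\mid\gcd(a,p^l)}\mu(d)$, swapping the order of summation, noting that only $d=1$ and $d=p$ survive, and evaluating the two resulting full exponential sums gives exactly $c_{p^l}(r)=p^l\mathbf{1}_{p^l\mid r}-p^{l-1}\mathbf{1}_{p^{l-1}\mid r}$, from which the three cases follow. The paper states this lemma as well known and gives no proof at all, so there is nothing to compare against; your derivation is the standard one and fills that gap correctly.
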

Note that for any $\bfa=(a_1,\dots,a_4)\in\ZZ^4_{\prim}$ and any prime $p$, there is some $1\leq i\leq 4$ such that $p$ and $a_i$ are coprime. Thus, after reducing (\ref{Equation:rho_ramanujan_darstellung}) to prime powers $p^l$, we only need to consider residue classes $c\mod p^l$ where $p^{l-1}|c$. With the substitution $c=c'p^{l-1}$ we get
\begin{align*}
    \rho_{\bfa}(p^l)&=\frac{1}{p^l}\sum_{\substack{c\,(p^l)\\p^{l-1}|c}}c_{p^l}(a_1c)c_{p^l}(a_2c)\dots c_{p^l}(a_4c)\\
    &=\frac{1}{p^l}\sum_{c'\,(p)}c_{p^l}(a_1p^{l-1}c')c_{p^l}(a_2p^{l-1}c')\dots c_{p^l}(a_4p^{l-1}c').
\end{align*}
While the summand with $c'=0$ evaluates to $\phi(p^l)^4$, for $c'\neq 0$ each factor yields either $-p^{l-1}$ or $\phi(p^l)$ according to whether $a_i$ is coprime to $p$ or not. Hence let $\lambda$ be the number of $1\leq i\leq 4$ such that $p$ divides $a_i$. With this we have
\begin{equation}\label{rho_a(Q)(p^l) formula}
    \rho_{\bfa}(p^l)=\frac{1}{p^l}\Big(\phi(p^l)^4+(p-1)\phi(p^l)^{\lambda}(-p^{l-1})^{4-\lambda}\Big).
\end{equation}
Relating this back to $\sigma(\bfa,p^l)$ with (\ref{Equation:sigma-rho_relation}), we get that
\begin{align}\label{sigma_p^l_formula}
    \sigma(\bfa,p^l)&=\frac{1}{\phi(p^l)^4}\Big(\phi(p^l)^4+(p-1)\phi(p^l)^{\lambda}(-p^{l-1})^{4-\lambda}\Big)\\
    &= 1+(p-1)\Big(\frac{-p^{l-1}}{\phi(p^l)}\Big)^{4-\lambda}.\nonumber
\end{align}
At this point it is sensible to make a distinction between primes $p$ according to their coprimality to $\bfa$. We let
\begin{equation*}
    \Pri_G=\{p\in\Pri:\gcd(p;a_i)=1\text{ for }1\leq i\leq 4\}\quad\text{and}\quad \Pri_B=\Pri\setminus\Pri_G.
\end{equation*}
Now for $p\in\Pri_G$ with $p\geq 2$ it is $\lambda=0$ and thus
\begin{equation*}
    \sigma(\bfa,p^l))=1+\frac{1}{(p-1)^3}>1.
\end{equation*}
This is a factor of an absolutely convergent Euler product and therefore products over such factors are bounded from above and below. For $p\in\Pri_B$, we have that $\lambda\in\{1,2,3\}$. Whenever $\lambda=3$, by (\ref{sigma_p^l_formula}) it is
\begin{equation*}
    \sigma(\bfa,p^l)=0.
\end{equation*}
If we have that $\lambda=2$, then we get
\begin{equation*}
    \sigma(\bfa,p^l)=1+\frac{1}{(p-1)}>1
\end{equation*}
Since this factor is large enough and there is finitely many of them for a fixed $\bfa$, we can dismiss this case for a lower bound. In the last case, when $\lambda=1$, we make a distinction between the prime $2$ and primes $p\geq 3$. For the former we have that
\begin{equation*}
    \sigma(\bfa,2^l)=1-1=0.
\end{equation*}
For the remaining $p\geq 3$, it is
\begin{equation*}
    \sigma(\bfa,p^l)=1-\frac{1}{(p-1)^2}\geq 1-\frac{4}{p^2},
\end{equation*}
which is a factor of an absolutely convergent Euler product and thus there exists some $C>0$ independent of $\bfa$ such that products over these factors are $\geq C$.

To state the result of this subsection, we quickly relate this to the local solvability of Equation (\ref{Equation 1}). For a prime $p$, if there is a solution $\bald=(b_1,\dots,b_4)\in\Rfrak(p^l)$, for some $l\geq 1$, to
\begin{equation*}
    a_1b_1+a_2b_2+a_3b_3+a_4b_4\equiv 0\,(\text{ mod }p^l),
\end{equation*}
it is $\rho_{\bfa}(p^l)\neq 0$ by definition and thus also $\sigma(\bfa,p^l)\neq 0$. This suffices to obtain the following lemma.
\begin{lemma}
    Let $\bfa\in\LL^{\loc}$ and $\SSS_{\bfa}(B)$ as defined in (\ref{def:SSSingular series}). There exists some $C>0$ such that we have
    \begin{equation*}
        \SSS_{\bfa}(B)\geq C >0.
    \end{equation*}
\end{lemma}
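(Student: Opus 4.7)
The plan is to use the multiplicativity recorded in (\ref{Equation:sigma-rho_relation}) to write
\[
\SSS_{\bfa}(B) \;=\; \sigma(\bfa, W) \;=\; \prod_{p \leq w} \sigma\bigl(\bfa, p^{l_p}\bigr),
\]
where $l_p = \lceil \log w/\log p\rceil + 1$, and then to combine the factor-by-factor computations already carried out in this subsection into a single uniform lower bound. Since the formula (\ref{sigma_p^l_formula}) shows that $\sigma(\bfa, p^{l_p})$ depends only on the value $\lambda = \lambda_p \in \{0,1,2,3\}$ counting how many coordinates of $\bfa$ are divisible by $p$ (the case $\lambda = 4$ being excluded by primitivity of $\bfa$), the lower bound reduces to a finite case analysis.

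The first step is to observe that the hypothesis $\bfa \in \LL^{\loc}$ rules out exactly the patterns in which the local factor vanishes. Directly from (\ref{sigma_p^l_formula}), $\sigma(\bfa,p^{l_p}) = 0$ precisely when $\lambda = 3$ at some prime $p$, or when $\lambda = 1$ at $p = 2$; in either case one checks that the congruence $a_1u_1 + \cdots + a_4u_4 \equiv 0 \pmod{p}$ has no solution in reduced residues. (For $p = 2$ the only reduced residue is $1$, so the congruence becomes $a_1 + \cdots + a_4 \equiv 0 \pmod{2}$, which fails when exactly one $a_i$ is even; for $\lambda = 3$ the congruence reduces to a single term coprime to $p$ being zero mod $p$, which is impossible.) Hence for $\bfa \in \LL^{\loc}$ every factor in the product above is strictly positive.

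I would then split the product according to whether $p \in \Pri_G$ or $p \in \Pri_B$ and assemble the bound. For $p \in \Pri_G$ (that is, $\lambda = 0$) the factor equals $1 + (p-1)^{-3} > 1$; for $p \in \Pri_B$ with $\lambda = 2$ the factor is $1 + (p-1)^{-1} > 1$; and for $p \in \Pri_B$ with $\lambda = 1$ (necessarily $p \geq 3$ by the preceding step) the factor is $1 - (p-1)^{-2} \geq 1 - 4p^{-2}$. Discarding the factors $\geq 1$ and bounding the remaining ones by the corresponding terms of the absolutely convergent Euler product yields
\[
\SSS_{\bfa}(B) \;\geq\; \prod_{p \geq 3} \Big(1 - \frac{4}{p^2}\Big) \;=:\; C \;>\; 0,
\]
a positive constant independent of $\bfa$ since $\sum_p p^{-2} < \infty$. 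The main obstacle is entirely cosmetic: ensuring the bookkeeping is airtight, so that every configuration $(p, \lambda)$ compatible with $\bfa \in \LL^{\loc}$ has been accounted for and the resulting lower bound carries no hidden dependence on $\bfa$. No further analytic input beyond the identities already set up in the preceding discussion is needed.
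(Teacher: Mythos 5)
Your proposal is correct and follows essentially the same route as the paper: multiplicativity via (\ref{Equation:sigma-rho_relation}), the explicit formula (\ref{sigma_p^l_formula}), the case analysis on $\lambda\in\{0,1,2,3\}$, and the observation that local solvability excludes exactly the vanishing configurations, leaving a product bounded below by an absolutely convergent Euler product. The paper's proof is just this same discussion assembled slightly less explicitly.
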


\subsection{The singular integral is bounded from below}\label{subsection:sing_series_lower_bound}$~$
In this section we prove Lemma \ref{lemma:singular terms av lower bound}. We start by proving a point-wise lower bound for $\Jfrak_{\bfa}(B)$ which might be small in certain cases but is large enough on average to obtain the lemma. During this section, for a vector $\bx=(x_1,\dots,x_N)\in\RR^N$, we let
\begin{equation*}
    \|\bx\|_{\min}=\min_{i\in\{1,\dots,N\}}|x_i|.
\end{equation*}
This is not a norm on $\RR^N$. We obtain the lower bound by finding some region that is contained in the set
\begin{equation*}
    \big\{\buld\in\big(\Ball_4(1)\cap\RR^4_+\big):\bfa\in\Ccal_{\buld}^{(\gamma)}\big\},
\end{equation*}
for which we can give an estimate on its volume. This then yields a lower bound for $\Jfrak_{\bfa}(B)$. For a nonzero-integer vector $\bfa$ we let
\begin{equation*}
    \delta=\frac{\|\bfa\|_{\min}}{\|\bfa\|_{\infty}}.
\end{equation*}
We then have the following lemma.
\begin{lemma}
    Let $\bfa=(a_1,\dots,a_4)\in(\ZZ\setminus\{0\})^4$ where the $a_i$ are not all of the same sign. Then there exists a $\bx=(x_1,\dots,x_4)\in\RR^4_+$ with $\|\bx\|=1$ such that $\langle\bfa,\bx\rangle=0$ and $x_i\gg\delta$. The implied constant does not depend on $\bfa$.
\end{lemma}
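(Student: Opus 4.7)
My plan is a direct construction of $\bx$, followed by normalization. Split the index set $\{1,2,3,4\}$ into $P=\{i:a_i>0\}$ and $N=\{i:a_i<0\}$. By hypothesis both $P$ and $N$ are non-empty, so the positive sums
\begin{equation*}
u=\sum_{i\in P}a_i,\qquad v=-\sum_{i\in N}a_i
\end{equation*}
are each at least $\|\bfa\|_{\min}$ and at most $4\|\bfa\|_{\infty}$.

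Define an (unnormalized) vector $\by\in\RR^4_+$ by $y_i=v$ for $i\in P$ and $y_i=u$ for $i\in N$. A quick computation gives
\begin{equation*}
\langle\bfa,\by\rangle=v\sum_{i\in P}a_i+u\sum_{i\in N}a_i=vu-uv=0.
\end{equation*}
Every coordinate of $\by$ is at least $\min(u,v)\geq\|\bfa\|_{\min}$, while
\begin{equation*}
\|\by\|^2=|P|v^2+|N|u^2\leq 4\big(u^2+v^2\big)\leq 128\|\bfa\|_{\infty}^2.
\end{equation*}
Then $\bx=\by/\|\by\|$ is a positive unit vector with $\langle\bfa,\bx\rangle=0$ and
\begin{equation*}
x_i\geq\frac{\|\bfa\|_{\min}}{\sqrt{128}\,\|\bfa\|_{\infty}}=\frac{\delta}{\sqrt{128}},
\end{equation*}
which is the required lower bound with an absolute implied constant.

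There is essentially no obstacle: the lemma is a piece of elementary linear algebra and the only subtlety is choosing a construction that balances all four coordinates simultaneously. A naive attempt such as $x_i=1/u$ on $P$ and $x_i=1/v$ on $N$ fails when $u$ and $v$ are of very different sizes, because the smaller of the two produces a coordinate that dominates the norm and shrinks the normalized minimum below $\delta$. Swapping the roles so that the large-sum side gets the small weight and vice versa, as in the choice $y_i=v$ on $P$, $y_i=u$ on $N$ above, is precisely what keeps the normalized coordinates comparable to $\delta$.
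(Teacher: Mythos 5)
Your construction is correct: $\by$ lies in the open positive orthant, is orthogonal to $\bfa$ by the cancellation $uv-uv=0$, and the bounds $\min(u,v)\geq\|\bfa\|_{\min}$ and $\|\by\|\leq\sqrt{128}\,\|\bfa\|_{\infty}$ give $x_i\geq\delta/\sqrt{128}$ after normalization, exactly as required. This is essentially the same elementary construct-and-normalize argument as the paper's, though your symmetric choice $y_i=v$ on $P$, $y_i=u$ on $N$ is a little cleaner than the paper's version, which fixes the coordinates on the side containing the largest $|a_j|$ to equal $\delta$, solves for the remaining coordinates, and therefore needs a case distinction on where that maximum lies.
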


\begin{proof}
    We start by reorganizing the entries of $\bfa$ such that there is a $1\leq r\leq 3$ so that $a_1,\dots,a_r$ are positive and $a_{r+1},\dots,a_4$ are negative. Furthermore, let $1\leq j\leq 4$ be an integer so that $|a_j|=\|\bfa\|_{\infty}$. If we have that $j\leq r$, we can pick $x_1,\dots,x_r$ equal to $\delta$ and find that
    \begin{equation*}
        r\|\bfa\|_{\infty}\geq a_1x_1+\dots+a_rx_r\geq\|\bfa\|_{\min}.
    \end{equation*}
    Now for $r+1\leq i\leq 4$, we can pick $x_i=\big((4-r)|a_i|\big)^{-1}\big(\sum_1^ra_ix_i\big)$. Then these last $x_i$ are at least of size $\delta/4>0$ and with $\bx=(x_1,\dots,x_4)$ we have found a vector such that $\langle\bfa,\bx\rangle=0$. Moreover, it is $\|\bx\|\leq 4\sqrt{4}$ and thus there is a $c>1$ such that the vector $\by=c(4\sqrt{4})^{-1}\bx$ satisfies the statement of the lemma.

    In the case where there are only $j>r$ such that $|a_j|=\|\bfa\|_{\infty}$, we do the same argument with flipped signs.
\end{proof}
We observe that it is
\begin{equation*}
    \vol\big\{\buld\in\big(\Ball_4(1)\cap\RR^4_+\big):|\langle\bfa,\buld\rangle|\leq\frac{\|\bfa\|\|\buld\|}{2\gamma}\big\} \geq \vol\big\{\buld\in\big(\big(\Ball_4(1)\setminus\Ball_4(\frac{1}{4})\big)\cap\RR^4_+\big):|\langle\bfa,\buld\rangle|\leq \frac{\|\bfa\|\|\buld\|}{2\gamma}\big\}.
\end{equation*}
Now the lemma supplies a $1\geq c>0$ and $\bx=(x_1,\dots,x_4)$ with $\|\bx\|=1/2$, $\langle\bfa,\bx\rangle=0$ and $x_i\geq c\delta$. If a $\bvld\in\RR^4$ satisfies $\|\bvld-\bx\|\leq c\delta/4$, then it is $1/4\leq \|\bvld\|\leq 3/4$ and $\langle\bfa,\bvld-\bx\rangle=\langle\bfa,\bvld\rangle$. In that case we also have that $|v_i-x_i|\leq c\delta/4$ and thus $v_i\geq c\delta/4$. This shows that the volume above is bounded from below by
\begin{equation*}
    \vol\big\{\buld\in\Ball_4(c\delta/4):|\langle\bfa,\buld\rangle|\leq \frac{\|\bfa\|}{8\gamma}\big\}.
\end{equation*}
Next, letting $\Tilde{\bfa}=\bfa/\|\bfa\|$, this volume is equal to
\begin{equation}\label{equation:vol_mit_cdelta}
    \vol\big\{\buld\in\Ball_4(c\delta/4):|\langle\Tilde{\bfa},\buld\rangle|\leq (8\gamma)^{-1}\big\}.
\end{equation}
We further reduce the volume so that we can write down a parametrization allowing us to give an explicit lower bound. Without loss of generality we may assume that $\|\bfa\|_{\infty}=|a_1|$. For a $\bw\in\RR^3$, we let
\begin{equation*}
    W_{\Tilde{\bfa},\gamma}(\bw)=\vol\big\{v_0\in[-3c\delta/8,3c\delta/8]:|\langle\Tilde{\bfa},(v_0,\bw)\rangle|\leq (8\gamma)^{-1}\big\},
\end{equation*}
so that the volume in (\ref{equation:vol_mit_cdelta}) is bounded from below by
\begin{equation}\label{equation:vol_mit_integral_und_W}
    \int_{\Ball_3(c\delta/8)}W_{\Tilde{\bfa},\gamma}(\bw)\,d\bw.
\end{equation}
Given a $\bw=(w_2,w_3,w_4)\in\Ball_3(c\delta/8)$, let $v_1=\Tilde{a}_2w_2+\dots+\Tilde{a}_4w_4$, so that  the $v_0$ in $W_{\Tilde{\bfa},\gamma}(\bw)$ are given by the inequalities
\begin{equation*}
    |v_0|\leq\frac{3c\delta}{8},\quad\text{and}\quad\Big|v_0-\frac{v_1}{\Tilde{a}_1}\Big|\leq \frac{1}{8\gamma}.
\end{equation*}
Note that since we assumed $\|\bfa\|_{\infty}=|a_1|$, it is $|v_1/\Tilde{a}_1|\leq 3c\delta/8$ and thus this set is non empty. Now if $\delta\gg\gamma^{-1}$, then the size of this set is bounded from below by $\gamma^{-1}$ since an interval of length $(16\gamma)^{-1}$ is included in the intersection of the intervals given by the two inequalities above. When on the other hand we have $\gamma^{-1}\gg\delta$, then an interval of length $\gg\delta$ is included. Therefore we have that
\begin{equation*}
    W_{\Tilde{\bfa},\gamma}(\bw)\gg\min\Big\{\delta,\frac{1}{\gamma}\Big\}.
\end{equation*}
Putting this back into (\ref{equation:vol_mit_integral_und_W}) and recalling the chain of inequalities, we find that we have shown
\begin{equation}\label{lower_bound:singular_integral}
    \Jfrak_{\bfa}(B)\gg \gamma\delta^3\min\Big\{\delta,\frac{1}{\gamma}\Big\}\geq \delta^3\min\big\{\gamma\delta,1\big\},
\end{equation}
where the implied constant does not depend on $\bfa$ or $\gamma$. Note that depending on $\bfa$, this lower bound can get as small as $\|\bfa\|^{-3}$. However, in that case $\bfa$ must have some small coefficients, which does not happen often. We put this observation into the following lemma and then Lemma \ref{lemma:singular terms av lower bound} will follow immediately.
\begin{lemma}\label{lemma:Jfrak_average_lower_bound}
    Let $A$ and $B$ be sufficiently large real numbers such that $A\leq B^3$. Let $C>0$ and $\kappa>0$, then we have that
    \begin{equation*}
        \#\Big\{\bfa\in\LL^{\loc}(A):\,\Jfrak_{\bfa}(B)\leq\frac{C}{(\log\log B)^{\kappa}}\Big\}\ll\frac{A^4}{(\log\log A)^{\kappa/3}}.
    \end{equation*}
\end{lemma}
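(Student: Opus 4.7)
The plan is to combine the pointwise lower bound (\ref{lower_bound:singular_integral}) with a direct count of integer tuples that have an unusually small coordinate. Recall the notation $\delta := \|\bfa\|_{\min}/\|\bfa\|_{\infty}$ and $\alpha = \log B$ from that inequality, which states $\Jfrak_{\bfa}(B) \gg \delta^3\min\{\alpha\delta,1\}$.

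First I would translate the hypothesis $\Jfrak_{\bfa}(B)\leq C(\log\log B)^{-\kappa}$ into an upper bound on $\delta$ by splitting into two cases. If $\alpha\delta\geq 1$, the pointwise bound gives $\delta^3\ll(\log\log B)^{-\kappa}$, hence $\delta\ll(\log\log B)^{-\kappa/3}$. If instead $\alpha\delta<1$, then $\delta<1/\log B$, which for $B$ large is much smaller than any fixed negative power of $\log\log B$. Since the hypothesis $A\leq B^3$ yields $\log\log B\geq\log\log A-O(1)$ for $A$ sufficiently large, we conclude in either case that $\delta\leq C'(\log\log A)^{-\kappa/3}$ for some $C'>0$ depending only on $C$ and $\kappa$.

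Next I would bound the number of $\bfa\in\ZZ^4$ with $\|\bfa\|\leq A$ satisfying $\|\bfa\|_{\min}\leq\epsilon\|\bfa\|_{\infty}$ for $\epsilon:=C'(\log\log A)^{-\kappa/3}$, which upper bounds the set in the lemma since $\LL^{\loc}(A)\subset\ZZ^4_{\prim}\cap\Ball_4(A)$ and $\|\bfa\|_{\infty}\leq\|\bfa\|$. Applying a union bound over the index $i\in\{1,2,3,4\}$ at which $\|\bfa\|_{\min}$ is attained: for fixed $i$, the constraints $|a_i|\leq\epsilon\|\bfa\|_{\infty}\leq\epsilon A$ together with $|a_j|\leq A$ for $j\neq i$ contribute at most $O(\epsilon A\cdot A^3)$ tuples. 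Summing over the four choices of $i$ yields the claimed bound $O(\epsilon A^4)=O(A^4(\log\log A)^{-\kappa/3})$.

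The substantive work is already packaged into (\ref{lower_bound:singular_integral}), so no new analytic input is required. The only points I would double check are that the degenerate regime $\alpha\delta<1$ really is absorbed by the target savings (it is, since $1/\log B\ll(\log\log A)^{-\kappa/3}$ under $A\leq B^3$ for large $A$) and that the implied constants along the chain of implications depend only on $C$ and $\kappa$, as required by the statement.
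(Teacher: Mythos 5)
Your proposal is correct and follows essentially the same route as the paper: both apply the pointwise bound $\Jfrak_{\bfa}(B)\gg\delta^3\min\{\alpha\delta,1\}$, split according to which term attains the minimum (yielding $\delta\leq 1/\log B$ or $\delta\ll(\log\log B)^{-\kappa/3}$), and then count tuples with $\|\bfa\|_{\min}\leq\epsilon\|\bfa\|_{\infty}$ by a union bound over the small coordinate. Your write-up just makes explicit the counting step and the passage from $\log\log B$ to $\log\log A$ that the paper leaves implicit.
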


\begin{proof}
    Given $C>0$ and $\kappa>0$, suppose that
    \begin{equation*}
        \Jfrak_{\bfa}(B)\leq \frac{C}{(\log\log B)^{\kappa}}.
    \end{equation*}
    By (\ref{lower_bound:singular_integral}), there exists some $\Tilde{C}>0$ such that we have
    \begin{equation*}
        \delta^3\min\{\gamma\delta,1\}\leq\frac{\Tilde{C}}{(\log\log B)^{\kappa}}.
    \end{equation*}
    If $\min\{\gamma\delta,1\}=\gamma\delta$, then this implies that $\|\bfa\|_{\min}\leq\|\bfa\|_{\infty}(\log B)^{-1}\ll A(\log A)^{-1}$ since $A\leq B^3$. There are at most $O(A^4(\log A)^{-1})$ many of these tuples. If on the other hand $\min\{\gamma\delta,1\}=1$, then we must have that
    \begin{equation*}
        \Big(\frac{\|\bfa\|_{\min}}{\|\bfa\|_{\infty}}\Big)^3\leq \frac{\Tilde{C}}{(\log\log B)^{\kappa}}.
    \end{equation*}
    We can apply the same argument as above to conclude that there are at most $O(A^4(\log\log A)^{-\kappa/3})$ many of these tuples.
\end{proof}

\section{The density of locally solvable linear forms}\label{section:density_theorem_proof}$~$

In this section we prove Theorem \ref{theorem:L^loc_density_estimate}. The strategy is straightforward; we pick a suitable subset of $\LL^{\loc}(A)$ whose cardinality we can compute and find that it is large enough. The set which is presented below is not directly a subset of $\LL^{\loc}(A)$, but there is a bijection to one. Therefore it is sufficient to consider that set for our purposes.
\begin{defn}\label{Definition:Subset}
    Let $A\geq 1$ a real number. We let $\LL'(A)$ be the set of all $\bfa=(a_1,\dots,a_4)\in\NN^4$ subject to the following constraints:
    \begin{enumerate}
        \item $|\bfa|\leq A$,
        \item all of the $a_i$ are odd,
        \item we have that $(a_1a_2;a_3)=(a_1a_2;a_4)=1$.
    \end{enumerate}
\end{defn}
Next, we convince ourselves that this is indeed in bijection to a subset of $\LL^{\loc}(A)$. First, the condition (3) in \ref{Definition:Subset} yields that $\LL'(A)$ is a subset of $\ZZ^4_{\prim}$. Also since we have that $|\bx|\leq\|\bx\|$ for all $\bx\in\RR^4$, the condition (1) implies that this is a subset of $\LL(A)$. We check that for each $\bfa=(a_1,\dots,a_4)\in\LL'(A)$ and each prime $p$, the equation
\begin{equation}\label{Equation:local}
    a_1x_1+a_2x_2+a_3x_3+a_4x_4\equiv 0 \quad(\mod p)
\end{equation}
has solutions with $x_i\in(\ZZ/p\ZZ)^*$. The condition (2) in \ref{Definition:Subset} yields that $a_i\equiv 1(\mod 2)$, so that Equation (\ref{Equation:local}) is trivially solvable for $p=2$. For $p>2$, we use Condition (3) to find that both $a_3$ and $a_4$ are coprime to $a_1$ and $a_2$ each. In particular we have that $\gcd(a_i;a_j;a_k)=1$ for any $i,j,k$ pairwise distinct. Now we can use results from Section \ref{section:N^loc_lower_bound}. Recalling the definition (\ref{def:rho_a(Q)}) of $\rho_{\bfa}(Q)$ for a positive integer $Q\geq 2$ together with Equation (\ref{rho_a(Q)(p^l) formula}) we find that there are solutions to Equation (\ref{Equation:local}) for each prime $p>2$. \\
Finally, the definition of $\LL^{\loc}(A)$ requires the $a_i$ to not all be of the same sign. However, switching a sign in $\bfa\in\LL'(A)$ does not change any of the conditions (1), (2) or (3) in \ref{Definition:Subset}. Thus the set $\LL'(A)$ is in bijection to a subset of $\LL^{\loc}(A)$ by changing a sign of one of the coordinates.
The next proposition gives us an estimate on the cardinality of $\LL'(A)$, from which we can immediately conclude Theorem \ref{theorem:L^loc_density_estimate}.

\begin{proposition}\label{prop:LL'lowerbound}
    There exists a $C>0$ such that for $A>1$ we have
    \begin{equation*}
        \#\LL'(A)=CA^4+O\big(A^3(\log A)\big).
    \end{equation*}
\end{proposition}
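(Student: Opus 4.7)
The plan is a classical sieve argument obtained by conditioning on $(a_1,a_2)$ and then counting admissible pairs $(a_3,a_4)$ independently. For any fixed odd $a_1,a_2\le A$, the product $a_1a_2$ is odd, so the two conditions ``$a_3$ is odd'' and ``$\gcd(a_3,a_1a_2)=1$'' combine into the single condition $\gcd(a_3,2a_1a_2)=1$. By a standard M\"obius inversion,
$$
\#\{a_3\le A:\gcd(a_3,2a_1a_2)=1\}
=\frac{A\,\phi(a_1a_2)}{2\,a_1a_2}+O\bigl(2^{\omega(a_1a_2)}\bigr),
$$
using $\phi(2m)=\phi(m)$ for odd $m$. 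The same estimate holds for $a_4$ independently. Squaring and summing over odd $a_1,a_2\le A$ gives
$$
\#\LL'(A)=\frac{A^2}{4}\sum_{\substack{a_1,a_2\le A\\ \text{odd}}}\left(\frac{\phi(a_1a_2)}{a_1a_2}\right)^{\!2}+\text{errors}.
$$

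To evaluate the double sum, set $f(n)=(\phi(n)/n)^2=\prod_{p\mid n}(1-1/p)^2$ and write $f=\mathbf{1}*g$, where $g$ is multiplicative with $g(1)=1$, $g(p)=-2/p+1/p^2$, and $g(p^k)=0$ for $k\ge 2$. Swapping summation order,
$$
\sum_{\substack{a_1,a_2\le A\\ \text{odd}}}f(a_1a_2)=\sum_{d\text{ odd squarefree}}g(d)\cdot\#\{(a_1,a_2)\le A\text{ odd}:d\mid a_1a_2\},
$$
and for squarefree $d$ a direct sieve on the condition ``each prime of $d$ divides $a_1$ or $a_2$'' yields an inner count of $\tfrac{A^2}{4}\prod_{p\mid d}(2/p-1/p^2)+O(A\cdot 2^{\omega(d)})$. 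The resulting leading term collapses to the absolutely convergent Euler product
$$
K:=\prod_{p>2}\left(1-\Bigl(\frac{2}{p}-\frac{1}{p^2}\Bigr)^{\!2}\right),
$$
which is strictly positive since each factor lies in $(0,1)$ for $p\ge 3$. Assembling the two sieve steps, the leading term of $\#\LL'(A)$ is $\tfrac{K}{16}A^4$, so we will obtain the claim with $C=K/16>0$.

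The remaining work is error bookkeeping. Expanding $(X+O(E))^2$ produces cross and square terms involving $2^{\omega(a_1a_2)}$; combining the bound $\sum_{a\le A}2^{\omega(a)}\ll A\log A$ with the trivial estimate $\phi(n)/n\le 1$ and careful summation over dyadic ranges in $d$ shows that all error contributions fit into $O(A^3\log A)$. The main obstacle is not conceptual but purely technical: shaving the error down to a single logarithm rather than $(\log A)^2$ requires using the tail decay of the Euler product for $g$ (equivalently, the fact that $g(p)=O(1/p)$) when truncating the sum over $d$ at a suitable height, so that the $O(A\cdot 2^{\omega(d)})$ contribution is summable. Once this is done, the conclusion $\#\LL'(A)=CA^4+O(A^3\log A)$ follows by collecting the above estimates.
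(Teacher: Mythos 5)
Your argument is correct in substance, and its first half coincides with the paper's: both condition on odd $(a_1,a_2)$, count $a_3,a_4\leq A$ coprime to $2a_1a_2$ by M\"obius inversion, and reduce the problem to evaluating $\tfrac{A^2}{4}\sum_{a_1,a_2\text{ odd}}\big(\phi(a_1a_2)/(a_1a_2)\big)^2$. Where you genuinely diverge is in how this double sum is evaluated and, more importantly, in how positivity of the constant is obtained. You write $(\phi(n)/n)^2=\sum_{d\mid n}g(d)$ for a single multiplicative $g$ supported on squarefree integers and arrive at the explicit Euler product $K=\prod_{p>2}\big(1-(2/p-1/p^2)^2\big)$, whose positivity is immediate factor by factor, giving $C=K/16$. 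The paper instead expands $\phi(a_1a_2)/(a_1a_2)$ as a M\"obius sum twice, producing a double sum over $d,k$ with $\gcd(d;k)$ bookkeeping handled by Lemma \ref{lemma:doublesum divided}, and since it cannot read off the sign of the resulting series it must separately prove the crude lower bound $\#\LL'(A)\gg A^4/\log\log A$ via $\phi(n)\gg n/\log\log n$ and deduce $C>0$ by contradiction. Your route is cleaner on this point and yields an explicit constant.

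One caveat on the error term: the cross terms $O\big(A\,2^{\omega(a_1a_2)}\big)$ from squaring sum to $\asymp A^3(\log A)^2$, since $\sum_{a_1,a_2\leq A}2^{\omega(a_1a_2)}\asymp A^2(\log A)^2$, and the truncation device you invoke for the sum over $d$ does not touch this contribution. So as written you obtain $O\big(A^3(\log A)^2\big)$ rather than $O\big(A^3\log A\big)$. This is harmless for the intended application, and the paper's own proof only reaches $O(A^{3+\epsilon})$ at the corresponding step, so you are not worse off; but to match the stated error you would need to sharpen the first sieve step, not the tail of the Euler product.
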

To prove this result, we start by presenting two small lemmas that simplify some computations for us.
\begin{lemma}\label{lemma:coprime sum}
    Let $q\in\NN$. For $X\geq 1$ and $\epsilon>0$ we have that
    \begin{equation*}
        \sum_{\substack{n\leq X\\(n;q)=1}}1=\frac{\phi(q)}{q}X+O(q^{\epsilon}).
    \end{equation*}
\end{lemma}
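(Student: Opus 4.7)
The plan is to use Möbius inversion on the coprimality condition. Writing $\mathbf{1}_{(n;q)=1}=\sum_{d\mid (n;q)}\mu(d)$ and swapping the order of summation yields
\begin{equation*}
    \sum_{\substack{n\leq X\\(n;q)=1}}1=\sum_{d\mid q}\mu(d)\sum_{\substack{n\leq X\\ d\mid n}}1=\sum_{d\mid q}\mu(d)\lfloor X/d\rfloor.
\end{equation*}

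Next, I would replace $\lfloor X/d\rfloor$ by $X/d+O(1)$ to split into a main term and an error term. The main term is
\begin{equation*}
    X\sum_{d\mid q}\frac{\mu(d)}{d}=\frac{\phi(q)}{q}X,
\end{equation*}
by the standard Euler product identity $\phi(q)/q=\prod_{p\mid q}(1-1/p)=\sum_{d\mid q}\mu(d)/d$.

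The error contribution is bounded by $\sum_{d\mid q}|\mu(d)|\leq \tau(q)$, the number of divisors of $q$. Finally I would invoke the classical divisor bound $\tau(q)\ll_{\epsilon}q^{\epsilon}$ to conclude the stated estimate. No step here is subtle; the only minor point worth noting is that the implied constant in the error depends on $\epsilon$, consistent with the statement of the lemma. The entire argument is a textbook computation and should fit in a few lines.
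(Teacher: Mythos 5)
Your argument is correct and is exactly the standard computation the paper has in mind (the paper explicitly omits the proof as "a straightforward computation"): Möbius inversion on the coprimality condition, the identity $\sum_{d\mid q}\mu(d)/d=\phi(q)/q$, and the divisor bound $\tau(q)\ll_{\epsilon}q^{\epsilon}$ for the error. Nothing is missing.
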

The proof of this lemma is a straightforward computation and will be omitted.
\begin{lemma}\label{lemma:doublesum divided}
    Let $q\in\NN$. For $X\geq 1$ and $\epsilon>0$ we have that
    \begin{equation*}
        \sum_{\substack{n,m\leq X\\q|nm\\n,m\text{ odd}}}1=\begin{cases}
             \frac{\sum_{d|q}d\phi(q/d)}{q^2}X^2+O(Xq^{\epsilon}) &\text{if $q$ is odd,}\\
             0 &\text{if $q$ is even}.
        \end{cases}
    \end{equation*}
\end{lemma}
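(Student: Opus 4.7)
The plan is to handle the two cases separately, since they behave quite differently. The $q$ even case is immediate: if $q$ is even, then $q \mid nm$ forces $nm$ to be even, which contradicts $n, m$ both odd, so the sum is empty.

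For $q$ odd, I would stratify the pairs by the value $d = \gcd(n, q)$, which ranges over divisors of $q$. Writing $n = dn'$ with $(n', q/d) = 1$, the condition $q \mid nm$ rearranges to $q/d \mid m$, so we may also write $m = (q/d)m'$ with $m' \leq Xd/q$. Crucially, since $q$ is odd, every divisor $d$ and every co-divisor $q/d$ is odd, so the oddness of $n$ is equivalent to the oddness of $n'$ and likewise for $m$ and $m'$; the parity condition therefore decouples cleanly from the gcd decomposition. The sum rewrites as
\begin{equation*}
    \sum_{d \mid q} \Biggl(\sum_{\substack{n' \leq X/d \\ n' \text{ odd}, \,(n', q/d) = 1}} 1 \Biggr) \Biggl(\sum_{\substack{m' \leq Xd/q \\ m' \text{ odd}}} 1 \Biggr).
\end{equation*}

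The inner $n'$-sum counts integers up to $X/d$ coprime to $2q/d$; by Lemma \ref{lemma:coprime sum} applied to the modulus $2q/d$, this equals $\frac{\phi(2q/d)}{2q/d}\cdot\frac{X}{d} + O(q^{\epsilon})$, which simplifies using $\phi(2q/d) = \phi(q/d)$ (valid because $q/d$ is odd). The $m'$-sum is simply $\frac{Xd}{2q} + O(1)$. Multiplying these two asymptotics, the main-term contribution from each $d$ is proportional to $\frac{X^{2} d\,\phi(q/d)}{q^{2}}$; summing over $d \mid q$ then collects a main term of the shape $\frac{X^{2}}{q^{2}}\sum_{d \mid q} d\,\phi(q/d)$, up to an absolute constant, matching the statement.

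The only real work is the error accounting. Each product of asymptotics produces cross terms of size $O(X q^{\epsilon}/d)$ and $O(q^{\epsilon})$, and after summing over the at most $\tau(q) \ll q^{\epsilon}$ divisors of $q$ one must verify these collapse to the stated $O(X q^{\epsilon})$; this uses $\sum_{d \mid q} d^{-1} \ll q^{\epsilon}$ and the standard divisor bound. No deeper tool beyond Lemma \ref{lemma:coprime sum} is needed, so the main obstacle is bookkeeping rather than anything conceptual — one just has to make sure not to lose track of the interplay between the divisor $d$ and the upper limit $Xd/q$ in the $m'$-sum, which is what produces the symmetric factor $d\,\phi(q/d)$ in the main term.
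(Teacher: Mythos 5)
Your proof is correct and follows essentially the same route as the paper: both arguments decompose the count according to $d=\gcd(n,q)$, convert $q\mid nm$ into the condition $(q/d)\mid m$, and reduce everything to Lemma \ref{lemma:coprime sum}. One remark: your careful tracking of the two oddness conditions actually yields the main term $\tfrac14\cdot\frac{X^2}{q^2}\sum_{d\mid q}d\,\phi(q/d)$ rather than the stated constant $1$ (the paper's own proof silently drops both factors of $\tfrac12$ coming from restricting $m$ and $n$ to odd values), but since the lemma is only used to produce the unspecified positive constant in Proposition \ref{prop:LL'lowerbound}, this discrepancy is harmless.
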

\begin{proof}
Let $n,m$ and $q$ be positive integers. Let $k=(q;n)$, we claim that $q/k$ dividing $m$ is equivalent to $q|nm$. The first statement clearly implies the second one. For the reverse implication, one can consider the $p$-adic valuation of $p/k$ for each prime $p$. We have $v_p(q)\leq v_p(m)+v_p(n)$ and by the definition of $k$ we also get that $v_p(q/k)=v_p(q)-\min\{v_p(q),v_p(n)\}$. Now $v_p(q/k)$ takes either the value $v_p(q)-v_p(q)=0$, or $v_p(q)-v_p(n)$, both of which are smaller than or equal to $v_p(m)$ for each prime $p$. We now use this to split the sums over $n$ and $m$ so that we can execute them separately. It is
    \begin{equation*}
        \sum_{\substack{n,m\leq X\\q|nm\\n,m\text{ odd}}}1= 
        \sum_{\substack{n\leq X\\ n\text{ odd}}}\sum_{\substack{m\leq X\\q/(q;n)|m\\ m\text{ odd}}}1
    \end{equation*}
    If $q$ is even, then the sums evaluate to 0, so we only need to consider odd $q$. In that case the last sum evaluates to $[X(q;n)/q]=X(q;n)/q+O(1)$.\\
    Now we are left to compute the sum
    \begin{equation*}
        \sum_{\substack{n\leq X\\ n\text{ odd}}}(q;n).
    \end{equation*}
    By letting $d=(q;n)$, we can factor out the gcd so that we can invoke Lemma \ref{lemma:coprime sum}. In total we get
    \begin{align*}
        \sum_{\substack{n,m\leq X\\q|nm\\n,m\text{ odd}}}1&=
        \frac{X}{q}\sum_{d|q}d\sum_{\substack{n\leq X/d\\(q/d,n)=1\\ n\text{ odd}}}1+O\big(X\big)\\
        &=\frac{X^2}{q^2}\sum_{d|q}d\phi(q/d) +O\big(Xq^{\epsilon}\big).
    \end{align*}
\end{proof}
We can now turn our attention to computing $\#\LL'(A)$. Given positive integers $n$ and $m$ and a real $X\geq 1$, we let
\begin{equation*}
    \Ncal(X;n,m)=\big\{(a,b)\in\NN^2:\,a,b\leq X, (nm;a)=(nm;b)=1\big\}.
\end{equation*}
Note that if $n$ and $m$ are odd integers and we have that $(nm;a)=1$ for some integer $a$, then saying that $a$ is odd is equivalent to the condition that $(2nm;a)=1$. Therefore we can write
\begin{equation}\label{eq:LL'formel}
    \#\LL'(A)=\sum_{\substack{a_1\leq A\\a_1\text{ odd}}}\sum_{\substack{a_2\leq A\\a_2\text{ odd}}}\#\Ncal(A;2a_1,2a_2).
\end{equation}
Now we use Lemma \ref{lemma:coprime sum} twice to obtain
\begin{align*}
    \#\Ncal(A,2a_1,2a_2)&=\Big(\frac{\phi(2a_1a_2)}{2a_1a_2}A+O\big((a_1a_2)^{\epsilon}\big)\Big)^2\\
    &=\Big(\frac{\phi(2a_1a_2)}{2a_1a_2}\Big)^2A^2+O\big(A(a_1a_2)^{\epsilon}\big)
\end{align*}
for all $\epsilon>0$. When putting this back into (\ref{eq:LL'formel}), we note that $\phi(2a_1a_2)=\phi(a_1a_2)$ since $a_1a_2$ is odd. Therefore we get
\begin{equation}\label{eq:LL'formel2}
    \#\LL'(A)=\frac{A^2}{4}\sum_{\substack{a_1\leq A\\a_1\text{ odd}}}\sum_{\substack{a_2\leq A\\a_2\text{ odd}}}\Big(\frac{\phi(a_1a_2)}{a_1a_2}\Big)^2\,+O\big(A^{3+\epsilon}\big).
\end{equation}
Before we try to evaluate the remaining sums, we proceed by giving a weak lower bound on $\#\LL'(A)$. This will be helpful to establish the non vanishing of the constant featured in Proposition \ref{prop:LL'lowerbound}. We insert the well known estimate $\phi(n)\gg n/\log\log n$ for $n\geq 3$ into (\ref{eq:LL'formel2}) to obtain
\begin{equation}\label{eq:LL'trivial lower bound}
    \#\LL'(A)\gg A^2\sum_{\substack{3\leq a_1\leq A\\a_1\text{ odd}}}\sum_{\substack{3\leq a_2\leq A\\a_2\text{ odd}}}\frac{1}{\log\log A}\gg \frac{A^4}{\log\log A}.
\end{equation}
Now we return to our analysis of the main term in (\ref{eq:LL'formel2}). We remove $\phi(a_1a_2)$ by writing it as a convolution of the Möbius function with the identity on $\NN$. This yields
\begin{align*}
     \sum_{\substack{a_1\leq A\\a_1\text{ odd}}}\sum_{\substack{a_2\leq A\\a_2\text{ odd}}}\Big(\frac{\phi(a_1a_2)}{a_1a_2}\Big)^2&=\sum_{\substack{a_1\leq A\\a_1\text{ odd}}}\sum_{\substack{a_2\leq A\\a_2\text{ odd}}}\sum_{d|a_1a_2}\frac{\mu(d)}{d}\sum_{k|a_1a_2}\frac{\mu(k)}{k}\\
     &=\sum_{\substack{d\leq A\\d\text{ odd}}}\frac{\mu(d)}{d}\sum_{\substack{k\leq A\\k\text{ odd}}}\frac{\mu(k)}{k}\sum_{\substack{a_1,a_2\leq A\\ a_1,a_2\text{odd}\\dk/(d;k)|a_1a_2}}1,
\end{align*}
where we have used the equivalence $d,k|a_1a_2\Leftrightarrow dk/(d;k)|a_1a_2$. We are now in a situation where we can apply Lemma \ref{lemma:doublesum divided}, resulting in
\begin{align}\label{LL'formel3}
    \#\LL'(A)=\frac{A^4}{4}\sum_{\substack{d\leq A\\ d\text{ odd}}}\frac{\mu(d)}{d^3}\sum_{\substack{k\leq A\\ k\text{ odd}}}\frac{\mu(k)}{k^3}(d;k)^2\sum_{l|\frac{dk}{(d;k)}}l\phi\Big(\frac{dk}{(d;k)l}\Big)\,+O\big(A^{3+\epsilon}\big)
\end{align}
for all $\epsilon>0$. All that is left to do is to complete the remaining sums as part of absolutely convergent series and then confirm that the resulting constant is non-zero. For the convergence we forget about the fact that the sums in the display above run only over odd integers. Further, the situation is symmetric in $d$ and $k$. We estimate the innermost sum by $\sum_{l|dk}\frac{dk}{l}\phi(l)$, so that for $\epsilon>0$ we get
\begin{equation*}
    \sum_{d\leq A}\frac{1}{d^2}\sum_{k>A}\frac{(d;k)}{k^2}\sum_{l|dk}\frac{\phi(l)}{l}\ll\sum_{d\leq A}\frac{1}{d}\sum_{k>A}\frac{1}{k^2}\tau(dk)\ll A^{\epsilon-1},
\end{equation*}
Where we used that $\tau(dk)\ll (dk)^{\epsilon}$. Completing the remaining sum, we find that
\begin{equation*}
    \sum_{d>A}\frac{1}{d^2}\sum_{k>A}\frac{(d;k)}{k^2}\tau(dk)\ll \sum_{d>A}d^{-3/2+\epsilon}\sum_{k>A}k^{-3/2+\epsilon}\ll A^{\epsilon-1}.
\end{equation*}
Now combining this with (\ref{LL'formel3}), we find that in total we have
\begin{equation*}
    \#\LL'(A)=\frac{A^4}{4}\sum_{d\text{ odd}}\frac{\mu(d)}{d^2}\sum_{k\text{ odd}}\frac{\mu(k)}{k^2}(d;k)\sum_{l|\frac{dk}{(d;k)}}\frac{\phi(l)}{l}+O\big(A^{3+\epsilon}\big).
\end{equation*}
To complete the proof of Proposition \ref{prop:LL'lowerbound}, suppose that the constant given by the series in the display above vanishes. Then we would have $\#\LL'(A)\ll A^{3+\epsilon}$, contradicting (\ref{eq:LL'trivial lower bound}).

\bibliographystyle{alpha}
\bibliography{FormsIn4Variables}

\begin{thebibliography}{BLBS23}

\bibitem[BD14]{BRUDERN201418}
Jörg Brüdern and Rainer Dietmann.
\newblock Random diophantine equations, i.
\newblock {\em Advances in Mathematics}, 256:18--45, 2014.

\bibitem[BLBS23]{BrLBSa}
Tim Browning, Pierre Le~Boudec, and Will Sawin.
\newblock The {H}asse principle for random {F}ano hypersurfaces.
\newblock {\em Ann. of Math. (2)}, 197(3):1115--1203, 2023.

\bibitem[Cas12]{cassels2012introduction}
J.W.S. Cassels.
\newblock {\em An Introduction to the Geometry of Numbers}.
\newblock Classics in Mathematics. Springer Berlin Heidelberg, 2012.

\bibitem[Hol23]{Hol}
Philip Holdridge.
\newblock Random diophantine equations in the primes ii, 2023.

\bibitem[LB21]{LBpaper}
Pierre Le~Boudec.
\newblock Height of rational points on random {F}ano hypersurfaces.
\newblock {\em Algebra Number Theory}, 15(3):657--672, 2021.

\bibitem[LT89]{LiuTsang}
Ming~Chit Liu and Kai~Man Tsang.
\newblock Small prime solutions of linear equations.
\newblock In {\em Th\'eorie des nombres ({Q}uebec, {PQ}, 1987)}, pages
  595--624. de Gruyter, Berlin, 1989.

\end{thebibliography}
\end{document}